\newtcolorbox{mytextbox}[1][]{%
	sharp corners,
	enhanced,
	colback=white,
	height=3cm,
	attach title to upper,
	#1
}
\theoremstyle{plain}
\newtheorem{theorem}{Theorem}[section] 
\newtheorem{corollary}[theorem]{Corollary}
\newtheorem{lemma}[theorem]{Lemma}
\newtheorem{prop} [theorem]{Proposition}
\theoremstyle{definition}
\newtheorem{defn}[theorem]{Definition}
\newtheorem{eg}[theorem]{Example}
\newtheorem{rem}[theorem]{Remark}
\newcommand{\E}{\mathbb{E}}
\newcommand{\PP}{\mathbb{P}}
\newcommand{\R}{\mathbb{R}}
\newcommand{\T}{\mathbb{T}}
\newcommand{\Z}{\mathbb{Z}}
\newcommand{\N}{\mathbb{N}}
\newcommand{\Bil}{\mathcal{B}}
\newcommand{\Fil}{\mathcal{F}}
\newcommand{\Hil}{\mathcal{H}}
\newcommand{\Lil}{\mathcal{L}}
\newcommand{\Mil}{\mathcal{M}}
\newcommand{\Nil}{\mathcal{N}}
\newcommand{\brw}{\boldsymbol{\mathrm{w}}}
\newcommand{\One}{\mathbbm{1}}
\title{Mean-field and fluctuations for hub dynamics in heterogeneous random networks}
\author[1,2]{Zheng Bian}
\affil[1]{Instituto de Ciências Matemáticas e de Computação, Universidade de São Paulo, São Carlos 13566-590, Brazil}
\author[2,3,4]{Jeroen S.W. Lamb}
\affil[2]{Department of Mathematics, Imperial College London, London SW7 2AZ, United Kingdom}
\affil[3]{International Research Center for Neurointelligence, The University of Tokyo, Tokyo 113-0033, Japan}
\affil[4]{Centre for Applied Mathematics and Bioinformatics, Department of Mathematics and Natural Sciences, Gulf University for Science and Technology, Halwally 32093, Kuwait}
\author[1,2]{Tiago Pereira}
\affil[ ]{Email addresses: \href{mailto:zheng@bian-zheng.cn}{zheng@bian-zheng.cn} \href{mailto:jeroen.lamb@imperial.ac.uk}{jeroen.lamb@imperial.ac.uk}, \href{mailto:tiago@icmc.usp.br}{tiago@icmc.usp.br}.}
\date{\today}
\begin{document}
\maketitle

\textbf{Abstract.} We study a class of heterogeneous random networks, where the network degree distribution follows a power-law, and each node dynamics is a random dynamical system, interacting with neighboring nodes via a random coupling function. We characterize the hub behavior by the mean-field, subject to statistically controlled fluctuations. In particular, we prove that the fluctuations are small over exponentially long time scales and obtain Berry-Esseen estimates for the fluctuation statistics at any fixed time. Our results provide an explanation for several numerical observations, namely, a scaling relation between system size and frequency of large fluctuations, the system size induced desynchronization, and the Gaussian behavior of the fluctuations.




\tableofcontents

\vspace{1cm}
\textbf{Data availability:} We do not analyse or generate any datasets, because our work proceeds within a theoretical and mathematical approach.

\textbf{All authors have no conflicts of interest.}

\newpage
\section{Introduction}
Network systems are fruitful models for various naturally occurring and man-made systems ranging from neuroscience \cite{HubScience} and physics \cite{marvel2009invariant} via electrochemistry \cite{nijholt2022emergent}, to social sciences  \cite{RMP} to mention a few applications. In the case of homogeneous networks, where 
symmetries facilitate the analysis, many ergodic and statistical properties of the network system are known, in the context of coupled map lattices \cite{chazottes2005dynamics}, all-to-all coupled systems \cite{Fernandez2019,Bahsoun2023-anosov}, and in the all-to-all thermodynamic limit \cite{ST2021-LR-SCTO,ST2022-sync, Galatolo2022}. Much less is known about heterogeneous networks, another class of realistic models which generally lack symmetry and feature massively connected nodes, referred to as \textit{hubs}, coexisting with poorly connected nodes \cite{judd2013exactly}. 

Hubs arise persistently in large random heterogeneous networks \cite{Banerjee2021} and play an important role in network systems. In addition to regulating the information flow and providing resilience during attacks \cite{albert2002statistical}, hubs affect the collective dynamics of the network  \cite{HubScience}. In fact, hubs may lead to a hierarchical transition toward global synchronization when the isolated dynamics of each node is periodic \cite{gomez2007paths}. Hubs can induce the optimal collective response of the network to noise \cite{tonjes2021coherence}, an abrupt transition to collective motion \cite{vlasov2015explosive}. When the isolated dynamics is chaotic, hubs inhibit global synchronization \cite{PvST20} but can spark the onset of cluster synchronization \cite{corder2023emergence,montalban2024spark}.

Even when individual interactions are weak, the hub behavior can change due to the collective interaction with its neighbors. Understanding hub behavior is intricate because the network system is high dimensional. Nonetheless, numerical and experimental results suggest that over very long time scales hub dynamics can be well approximated by a low-dimensional system given by the mean-field \cite{pereira2010hub,baptista2012collective,Ric16}. 
When each node dynamics is an expanding map, recent work \cite{PvST20} has proved this dimensional reduction under some restrictive assumptions.
Our results address three main shortcomings of previous works:
\begin{itemize}
\item[] i) Resilience against local perturbation. Previous work required 
hyperbolicity for the mean-fields of \textit{all} nodes. This assumption seems unnecessary: since hubs interact with a large number of nodes, the failure of a few nodes should not change the overall hub dynamics. 

\item[] ii) Networks with power-law degree distribution. Previous results also required the network to feature a degree separation between hubs and low degree nodes. This dichotomy is not present in most networks, where massively connected nodes coexist with other hubs that are not so well connected, leaving no gap between hubs and low degree nodes. 

\item[] iii) Characterization of large fluctuations. Over given time-scales hubs admit the mean-field approximation up to predominantly small fluctuations. It remains an open problem to statistically characterize the rare occurrences of large fluctuations in terms of network characteristics, such as size and degree distribution. 
\end{itemize}


In this paper, we meet these challenges by exploiting the typicality of random trajectories, and thereby overcoming some major technical challenges arising in the gap between topological dynamics and ergodic theory. We also apply concentration inequalities to the Chung-Lu random power-law network to establish certain graph theoretic properties.  
In particular, we advance the state-of-the-art of ergodic theory for network dynamics and complex systems by  characterizing hub dynamics in random power-law networks, in terms of the mean-field subject to Gaussian-like fluctuations.
We exhibit examples of uniformly contracting node dynamics in Introduction and Section \ref{sec:eg_contractions} and comment on  expanding and nonuniformly contracting cases in Remarks \ref{rem:star_typicality}, \ref{rem:beyond_contractions}.



In the following, we set up the network system in subsection \ref{sec:network_RDS_setup} and showcase numerical observations on the star network in subsections \ref{mfs}--\ref{sec:gaussian_star} and on a power-law network in subsections \ref{sec:emergent_PLhub_dynamics}--\ref{sec:size_desync}. In subsections \ref{sec:intro_informal_star} and \ref{sec:intro_informa_PL}, we  discuss our main theorems, which formalize the numerical observations. Section \ref{sec:intro_star} is dedicated to the case of star network, which is sufficient to explain the essence of the general result without too many technical details. Section \ref{sec:intro_PL} generalizes to a random power-law graph. In later sections \ref{sec:reduction_star} and \ref{sec:reduction_loc_star_like}, we provide abstract versions of the results covering both cases.

\subsection{Network random dynamical system} \label{sec:network_RDS_setup}
	The \textit{network random dynamical system} is the datum $(G,f,h,\alpha)$, where $G$ is an undirected graph on $N\geq 2$ nodes, $f=\{f_{\omega}:\T\to\T\}$ is a family of random circle maps that compose the node dynamics, $h=\{h_{\omega}:\T\times\T\to\R\}$ is a collection of random coupling functions that describe the pairwise interaction between neighbor nodes in $G$, and $\alpha >0$ is the coupling strength. The graph $G$ is represented by the adjacency matrix $A=(A_{ij})_{i,j=1}^N$, where $A_{ij}=1$ if nodes $i,j$ are connected and 0 otherwise. The degree of node $i$ is $k_i:=  \sum_{j=1}^N A_{ij}$, and the largest degree $\Delta_0:= \max_{i=1,\cdots,N}  k_i$. 
		The state $x_i^{t+1}$ of node $i$ at time $t+1$
		is given by
		\begin{equation}\label{eq:network_dynamics}
			x_i^{t+1} = f_{\boldsymbol{\omega}_i^t}(x_i^t) + \frac{\alpha}{\Delta_0} \sum_{j=1}^{N} A_{ij} h_{\boldsymbol{\omega}_i^t} (x_i^t,x_j^t)\mod 1,~~~~i=1,\cdots,N.
		\end{equation}
In the above equation, $\alpha$ is rescaled by $\Delta_0$ so that the most massively connected node receives an order-one interaction. 

\begin{rem}[Notation of noise realization]
In the introduction, we use the simple font $\omega$ to index the family of circle maps $f_{\omega}$ and coupling functions $h_{\omega}$, and the boldface $\boldsymbol{\omega}=(\boldsymbol{\omega}_i^t)$ to represent a vector of noise realizations with time $t=0,1,\cdots$ and node $i=1,\cdots,N$ coordinates.
\end{rem}

\begin{eg}\label{eg:intro_star_dynamics}
	As an example of node dynamics $f$, consider a family of contractions
	$f_\omega:\T\to\T$ on the circle
	\begin{equation}\label{eq:f_omega}
		f_\omega(x):=\begin{cases}
			\frac{x}{2} +\frac{\omega}{4},& x\in[0,\frac{1}{2}]\\
			\frac{1-x}{2} +\frac{\omega}{4},& x\in[\frac{1}{2},1]
		\end{cases}, ~~~~\omega=0,1,2,3,
	\end{equation}
	and iterate the dynamics at each time step by choosing from $\{f_{\omega}:\omega=0,1,2,3\}$ randomly independently and identically with probability $1/4$ for each contraction. 
	\begin{figure}[h]
		\centering 
		\includegraphics[height=50mm]{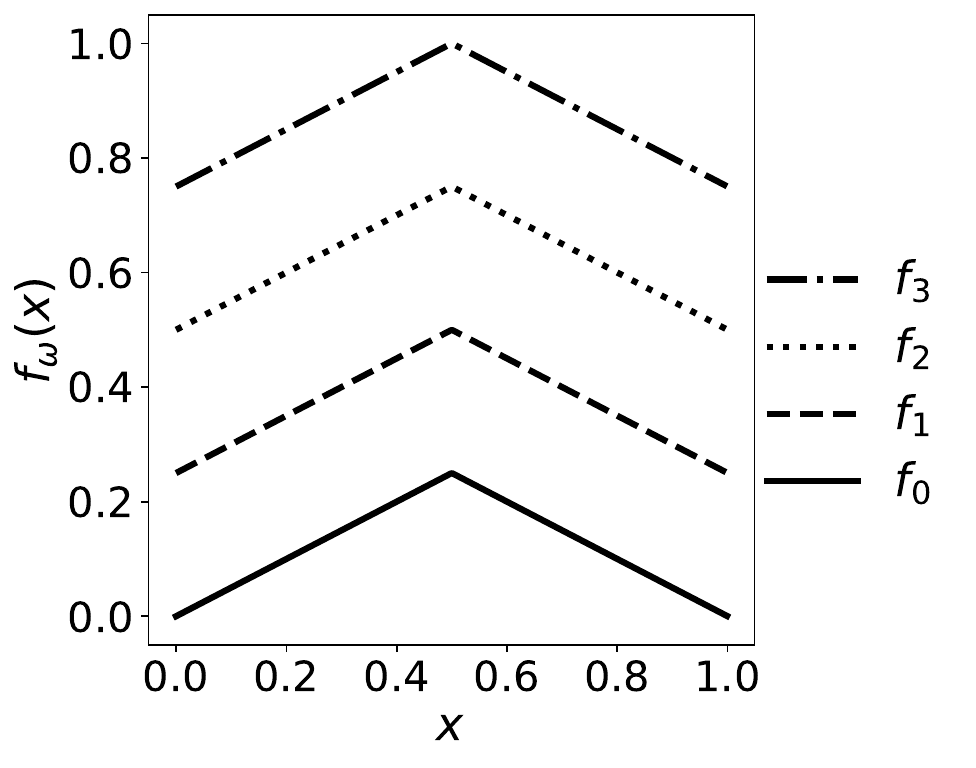}
		\caption{Graphs of four contractions $f_{\omega}$, $\omega=0,1,2,3$ on the circle $\T=[0,1]/0\sim1$.}
		\label{fig:f_omegas}
	\end{figure}
	In this example the node dynamics admit $\mathrm{Leb}_{\T}$ as the unique stationary measure. For other choices of $f_{\omega}$, the stationary measure can be absolutely continuous or singular with respect to $\mathrm{Leb}_{\T}$, including measures supported on a Cantor set; Theorem \ref{thm:typicality_randomcontractions} will cover a more general case.
	
	
	As an example of random coupling function, consider
	\begin{equation}\label{eq:h_omega}
		h_\omega (x,y)= \sin 2\pi y- \sin 2\pi x - \frac{\omega}{3.6},~~~~\omega=0,1,2,3.
	\end{equation}
	More generally, $h$ can be any family of $C^4$ maps $\T\times\T\to\R$.
\end{eg}
In the rest of the Introduction, we continue with the concrete examples of node dynamics $f_{\omega}$ and coupling function $h_{\omega}$ as in Example \ref{eg:intro_star_dynamics}.

\subsection{Dynamics on the star network}\label{sec:intro_star}
Consider a star graph $G$ on $N=10^6$ nodes: one hub together with $L=N-1$ low-degree nodes, where the hub influences each low-degree node, and each low-degree node influences the hub but not the other low-degree nodes. 
We index the hub by 1 and write $z$ for $x_1$, so the star network dynamics reads
\begin{equation}\label{eq:star_dynamics}
	\begin{aligned}
		z^{t+1} =& f_{\boldsymbol{\omega}_1^t}(z^t) + \frac{\alpha}{L} \sum_{j=2}^N h_{\boldsymbol{\omega}_1^t} (z^t,x_j^t)\mod1,\\
		x_j^{t+1} =& f_{\boldsymbol{\omega}_j^t}(x_j^t) + \frac{\alpha}{L} h_{\boldsymbol{\omega}_j^t}(x_j^t, z^t)\mod1,~~~~j=2,\cdots,N.
	\end{aligned}
\end{equation}
Here the noise $\boldsymbol{\omega}_i^t$ is assumed to be iid in time $t=0,1,\cdots$ and in node coordinates $i=1,\cdots,N$, assigning weight $1/4$ to each $\omega\in\{0,1,2,3\}$.

\subsubsection{Mean-field dimensional reduction} \label{mfs}
We simulate the star network by probing three coupling strengths $\alpha=0.05$, $0.8$, $0.9$. For each $\alpha$, we initialize the node states $(z^0,x_2^0,\cdots,x_N^0)\in \T^N$ at random with uniform distribution in $[0,1)$. Then, we iterate Eq. (\ref{eq:star_dynamics}), discard the first $5000$ iterates, and collect the next $1000$ iterates. In Figure \ref{fig:star_reduction}, we plot the hub return map
$z^t$ versus $z^{t+1}$ in red.  As the coupling strength $\alpha$ varies, the hub behavior differs from the isolated node dynamics. In Figure \ref{fig:f_omegas} on the left panel at $\alpha=0.05$, the hub remains contractive, on the central panel at $\alpha=0.8$, the hub dynamics appear to have an expanding region, and lastly, on the right panel at $\alpha=0.9$, the hub appears to hover around a fixed point near $0.2$. This shows the variety of hub behaviors emergent from the interactions, according to different coupling strengths. Due to  our choice of $h_{\omega}$ in Eq. (\ref{eq:h_omega}), the randomness in $f_{\alpha,\omega}$ collapses at $\alpha=0.9$, resulting in the single graph of $f_{0.9}=f_{0.9,\omega}$, $\omega=0,1,2,3$ as shown in the right panel of Figure \ref{fig:star_reduction}.

\begin{figure}[h]
	\centering 
	\includegraphics[height=50mm]{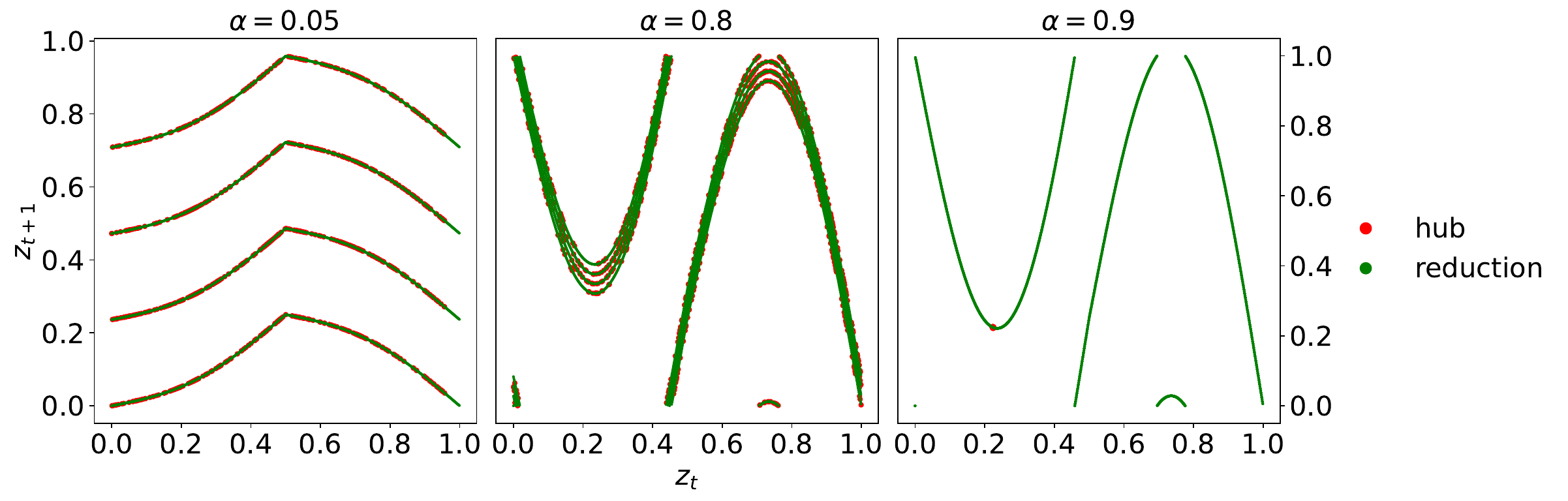}
	\caption{Numerical simulations for the star network dynamics (\ref{eq:star_dynamics}) on $N=10^6$  nodes at various coupling strengths $\alpha=0.05, 0.8,0.9$ on the left, center and right panels respectively, with iid random iteration of four circle contractions (\ref{eq:f_omega}) as isolated node dynamics and (\ref{eq:h_omega}) as pairwise interaction. The plots show the return behaviors of the hub, that is, the states $z^t$ on horizontal axis against the next states $z^{t+1}$ on vertical axis. Novel hub behaviors emerge from network interactions and vary across different coupling strengths: uniform contraction, expanding region and deterministic fixed point. The mean-field dimensional reduction ansatz yields a reduced one-dimensional system, whose graph, plotted in green, fits very well the actual hub behavior in red.}
	\label{fig:star_reduction}
\end{figure}

From Figure \ref{fig:star_reduction}, we observe that the emergent hub behavior resembles another one-dimensional random system.
Since the low degree nodes $j=2,\cdots,N$ receive only one contribution from the hub of order $O(L^{-1})$, 
we expect that the statistics of the low degree node to resemble the unique stationary measure $\mathrm{Leb}_{\T}$. In particular, the aggregate effect of the low degree nodes on the hub should be approximated by a space average 
\begin{align*}
	\frac{1}{L}\sum_{j=2}^N h_{\boldsymbol{\omega}_1^t}(z^t,x_j^t) &\approx \int_{\T} h_{\boldsymbol{\omega}_1^t}(z^t,x)\mathrm{d}x \\
	&= \sin 2\pi z^t - \frac{\boldsymbol{\omega}_1^t}{3.6}
\end{align*}
taken against the Lebesgue measure on the circle, which is the unique stationary measure of the isolated node dynamics and captures its typical statistics; see Theorem \ref{thm:typicality_randomcontractions}.  This ansatz, referred to as the \textit{mean-field dimensional reduction}, approximates the hub behavior by a one-dimension system
\[
z^{t+1} = f_{\alpha, \boldsymbol{\omega}_1^t}(z^t) + \xi^t(\boldsymbol{\omega},x),
\]
where the \textit{mean-field reduced map} reads
\begin{equation}\label{eq:f_alpha_omega}
	f_{\alpha, \omega}(z):= f_{\omega}(z) + \alpha\int_{\T} h_{\omega}(z, x)\mathrm{d}x\mod 1,
\end{equation}
and the \textit{mean-field fluctuation} at time $t$ from initial datum $(\omega,x)$ is
\[
\xi^t(\boldsymbol{\omega},x): = \frac{\alpha}{L} \sum_{j=2}^N \sin 2\pi x_j^t.
\]
Such approximation is meaningful when 
the fluctuation $| \xi^t | \ll 1$.
In Figure \ref{fig:star_reduction}, we plot in green the graph of this one-dimensional system, and the actual hub behavior in red, numerically corroborating the mean-field dimensional reduction. Theorem A (i) provides the corresponding mathematical statement, proving the reduction.

\subsubsection{Frequency of large fluctuations} \label{freqs}
To illustrate the impact of system size $L$ on fluctuations $\xi^t$, we simulate for each $L$ the star network dynamics at coupling strength $\alpha=0.9$ for $T$ iterations, and count the number 
\[
n^T_{\varepsilon}= \#\left\{t<T: |\xi^t|>\varepsilon\right\}
\] 
of times up to $T$ that the fluctuation exceeds a fixed threshold $\varepsilon$. Then we calculate the frequency $\rho_{\varepsilon}^T$ of large $(>\varepsilon)$ fluctuations up to time $T$
$$\rho_{\varepsilon}^T = n^T_{\varepsilon}/T.$$
In our simulations, we fix threshold $\varepsilon = 0.025$, vary the star size $L$ from $500$ to $10^4$ in steps of $500$, and simulate each system for a total time $T=2\times 10^5$.
In Figure \ref{fig:star_TT_size}, we show this frequency $\rho_{\varepsilon}^T$ versus $L$ in red diamonds. 
The green line is a linear fit of $\log \rho_{\varepsilon}^T$ against $L$, which strongly suggests that 
$$
\rho_{\varepsilon}^T ~= A e^{-\gamma L} ,~~~~A=e^{-0.736}, ~~\gamma=0.001
$$
and the chance to see the departure of the dynamics from mean-field reduced map becomes exponentially small in $L$ as the size $L$ of the star network grows. 
For quantitative relations between system size $L$  and frequency of large fluctuations $|\xi^t|>\varepsilon$, see Theorem A (ii) below.

\begin{figure}[h]
	\centering 
	\includegraphics[height=50mm]{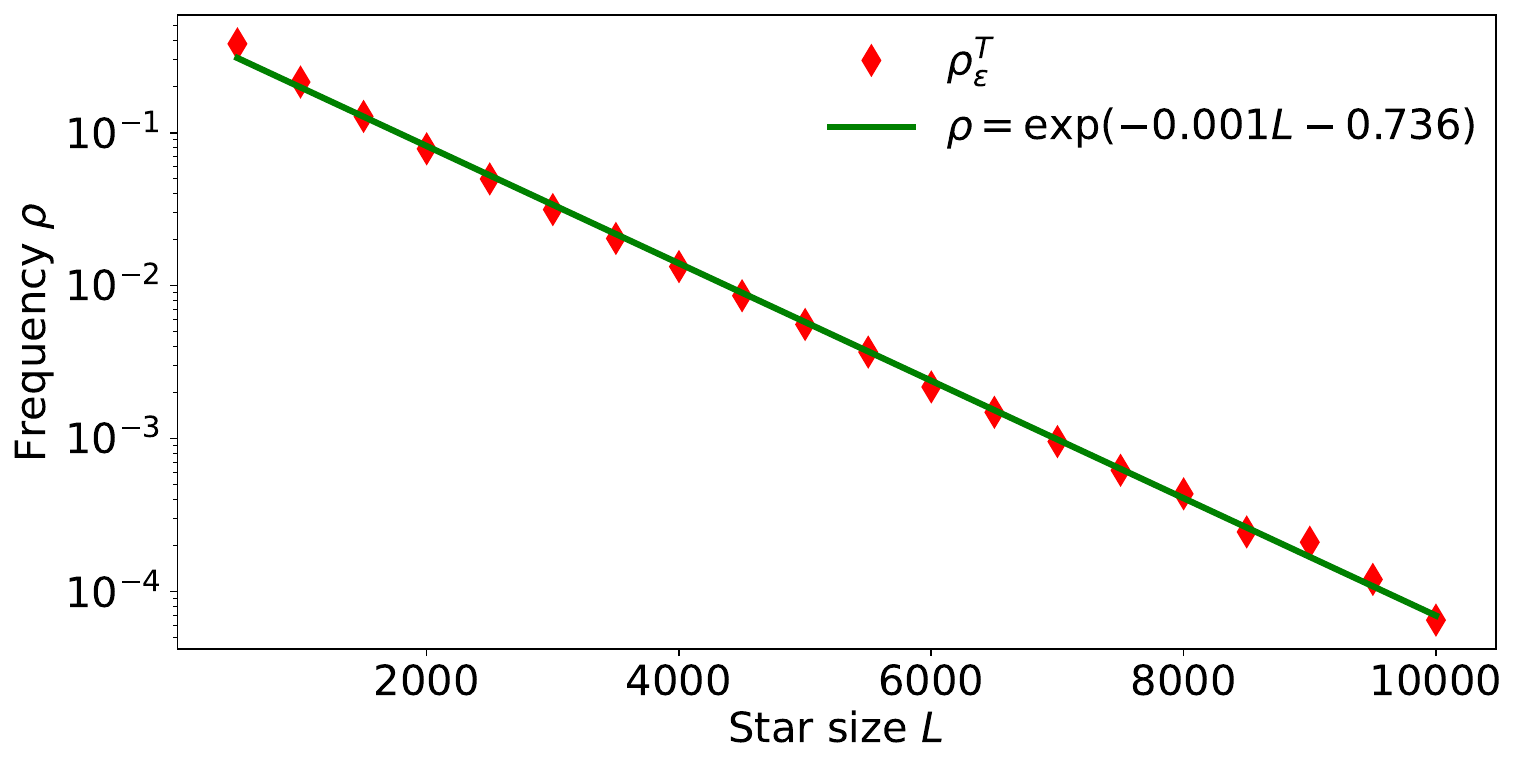}
	\caption{Frequency of large fluctuations decreases exponentially in system size. The red diamonds mark the frequency $\rho_{\varepsilon}^T$ up to time $T=2\times 10^5$ of large mean-field fluctuations, i.e., $|\xi^t|>\varepsilon$ with threshold $\varepsilon=0.025$. The horizontal axis for system size $L$ is in linear scale, whereas the vertical axis for frequency $\rho_{\varepsilon}^T$ is in logarithmic scale. The green line provides a tight linear fit, indicating an exponential decrease of $\rho_{\varepsilon}^T$ in $L$.}
	\label{fig:star_TT_size}
\end{figure}

\subsubsection{Gaussian behavior of the  fluctuations}\label{sec:gaussian_star}  
The fluctuation $\xi^t$ can be interpreted as an ensemble average of the low degree node states through the observable $x\mapsto \alpha\sin2\pi x$. The low degree nodes are almost isolated, up to hub influence of order $O(L^{-1})$, and hence almost independent from each other. Hence, we expect the fluctuations $\xi^t$ to follow a Central Limit behavior. 
To illustrate this, we fix star size $L=10^4$ and coupling strength $\alpha=0.9$, and take $10^4$ trials of network initial conditions randomly independently and uniformly in $\T^{L+1}$. For each trial $n$ we simulate the star network dynamics up to time $T=1000$ and calculate the terminal fluctuation $\xi^T_n$.
We plot the histogram of the data  $\{ \xi^T_n \}_{n=1}^{10^4}$ in Figure \ref{fig:star_CLT}. Superimposed in green is the probability density function of the normal distribution $\Nil(0,\frac{\alpha^2}{2L})$ with zero mean and variance $\frac{\alpha^2}{2L}$. The close fit indicates that $\Nil(0,\frac{\alpha^2}{2L})$ indeed captures the fluctuation statistics at time $T$; 
see Theorem A (iii).

\begin{figure}[h]
	\centering 
	\includegraphics[height=50mm]{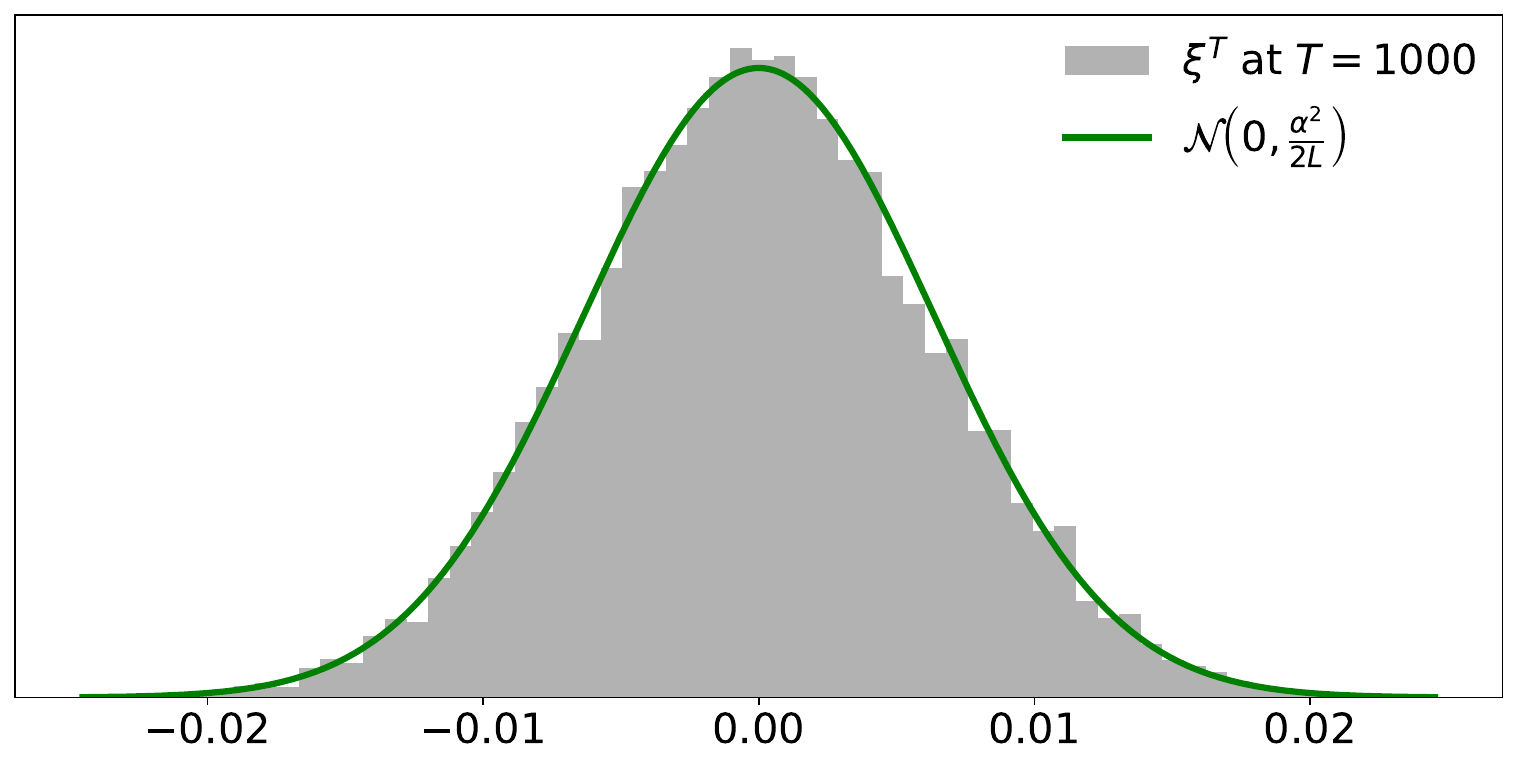} 
	\caption{Gaussian fluctuations. The grey histogram presents the fluctuations data $\{\xi_n^T\}_{n=1}^{10^4}$ corresponding to $10^4$ independent trials of network initial conditions; each $\xi_n^T$ is obtained by starting at initial condition trial $n$ and iterating for $T=1000$ times the network dynamics at coupling strength $\alpha=0.9$ on the star of size $L=10^4$. The green curve shows the probability density function of the normal distribution $\Nil(0,\frac{\alpha^2}{2L})$ with zero mean and variance $\frac{\alpha^2}{2L}$. The tight fit indicates that the fluctuation $\xi^T$ at time $T=1000$ has Gaussian statistics.}
	\label{fig:star_CLT}
\end{figure}

\subsubsection{Statement of main result on the star network} \label{sec:intro_informal_star}
Theorem A below underpins the observations made for the star network dynamics (\ref{eq:star_dynamics}): the mean-field dimensional reduction for hub behavior in Section \ref{mfs} is addressed in item (i), scaling relations for the large fluctuation frequency in Section \ref{freqs} are addressed in item (ii), and finally, the Gaussian nature of fluctuations in Section \ref{sec:gaussian_star} are addressed in item (iii).

In the statement below, we use $\mathrm{Prob}$ to denote the product probability measure on $\Omega\times\T^N$ of the Bernoulli measure for $\boldsymbol{\omega}\in \Omega=\{0,1,2,3\}^{\N\times N}$ times the volume for $x\in\T^N$.

\textbf{Theorem A: hub dynamics in star network random dynamical system.} \textit{
	Consider the dynamics (\ref{eq:star_dynamics}) on a star network with $f_{\omega},h_{\omega}$ as in Example \ref{eg:intro_star_dynamics}, and initial conditions following the uniform distribution on $\T^N$, $N=L+1\gg1$. Then, at coupling strength $\alpha\ll L^{1/2}$, the hub evolution admits mean-field dimensional reduction defined in (\ref{eq:f_alpha_omega}), namely:
	\begin{enumerate}
		\item[$\mathrm{(i)}$] \textbf{Almost sure reduction in asymptotic time:} for any $\varepsilon\gg \alpha L^{-1/2}$,
		\[
		\mathrm{Prob} \left\{(\boldsymbol{\omega},x): \liminf_{T\to+\infty}\frac{1}{T}\sum_{t=0}^{T-1} \One_{|\xi^t(\boldsymbol{\omega},x)| \leq \varepsilon}  \geq 1-\exp(-L \varepsilon^2 \alpha^{-2}/9)\right\} =1;
		\] 
		\item[$\mathrm{(ii)}$] \textbf{Small fluctuation in long time windows:} in time windows
		$$I_{t_0}^T:=\{t_0,\cdots,t_0+T-1\},~~~~T\geq \exp(L^{1-2\kappa}),~~t_0\in\N,$$
		we have successively small fluctuations
		\[
		\mathrm{Prob}\left\{(\boldsymbol{\omega},x):  \max_{t\in I_{t_0}^T} |\xi^t(\boldsymbol{\omega},x)|  \leq 3L^{-\kappa} \alpha \right\} \geq 1-\exp(-L^{1-2\kappa}),~~~~\kappa\in(0,1/2);
		\]
		\item[$\mathrm{(iii)}$] \textbf{Gaussian fluctuations: }at any time $t\in\N$, the fluctuation $\xi^t$ is approximately Gaussian, i.e.,
		$$ \mathrm{Prob}\left\{(\boldsymbol{\omega},x): \xi^t(\boldsymbol{\omega},x) \leq s\right\} \in\left[ F_L\left( s -c_1\right)-c_2, F_L\left( s+c_1 \right) +c_2\right],~~~~s\in\R,$$
		where $F_L$ denotes the cdf of the normal distribution with zero mean and variance $\alpha^2/(2L)$, and the correction constants are
		\[
		c_1=36\alpha^2 L^{-1},~~~~c_2=8 L^{-1/2}.
		\]
	\end{enumerate}
}

\begin{rem}
To state the relations among $L,\alpha,\varepsilon$ more precisely in Theorem A, we mean that there are constants $C_1,C_2,C_3>0$ such that if $L\geq C_1$, $L^{1/2}/\alpha\geq C_2$ and $\varepsilon L^{1/2}/\alpha\geq C_3$, then Items (i), (ii) and (iii) hold.
\end{rem}

In Item (i),  $\One_{|\xi^t(\boldsymbol{\omega},x)| \leq \varepsilon} $ indicates whether or not the fluctuaion $\xi^t(\boldsymbol{\omega},x)$ at time $t$ is small; the time average computes the relative frequency of small fluctuations in the time window $t=0,\cdots,T-1$; by passing to the limit inferior we obtain the asymptotic frequency of small fluctuations starting from initial data $(\boldsymbol{\omega},x)$; finally, Item (i) says that with full probability, the asymptotic frequency of small fluctuations is close to one.

In Items (i) and (ii), we have provided an upper bound for the fluctuation size and a lower bound for the asymptotic frequency; the constants $A,\gamma$ in Figure \ref{fig:star_TT_size} are not sharp and generally may depend on the coupling function. 
Theorem A is a consequence of the more general Theorem \ref{thm:star} and the uniform typicality of the random orbits of the contractions (\ref{eq:f_omega}), see Theorem \ref{thm:typicality_randomcontractions}. The derivation of the explicit constants is discussed in Appendix \ref{sec:proof_theorem_A}. We briefly discuss the proof strategy in this particular case, which will become a fundamental step in Theorem  \ref{thm:loc_star} for dimensional reduction on more complex networks.

\textbf{Main ideas of proof for Theorem {A}.} For item (i) our strategy follows three steps:
\begin{enumerate}
	\item[1.] We recast the dimensional reduction into a problem about visits to the so-called bad set, i.e., a region in the state space $\T^N$ that produces large fluctuation $\xi^t$.  
	
	\item[2.] We show that the bad set has a small size, according to large deviation results. By ergodicity, the frequency of such visits by a typical isolated orbit is as small as the size of the bad set. 
	
	\item[3.] We relate the low-degree node orbit to the isolated orbit by shadowing. The major challenge is to bridge the fundamental gap between the topologically constructed shadowing orbit and typicality in the ergodic sense. Our Theorem \ref{thm:star} treats the general case assuming compatibility of shadowing and ergodicity. In Section \ref{sec:eg_contractions} this compatibility is verified for the case of iid random iteration of contractions as node dynamics.
\end{enumerate}

Steps 1 and 2 were put forward in \cite{PvST20} and adapted to our setting. 
Our contribution in step 3 concerns 
the \textit{ergodic} properties of the shadowing orbit, a \textit{topologically} constructed object; this problem is difficult and generally open, see Remark \ref{rem:star_typicality}. We resolve this problem in Theorem A by using \textit{Breiman's ergodic theorem} together with the \textit{uniform contraction} property to ensure the typicality of random orbits for almost every noise realization independent of the initial condition. This concludes item (i).

{Item (ii) builds on step 3. By choosing the fluctuation threshold $\varepsilon=O(L^{-\kappa})$ for some $\kappa\in(0,1/2)$, we obtain an estimate for the size of bad set, which, by stationarity of the isolated random system, equals the probability that the typical isolated orbit hits the bad set at any time. The estimates follow by excluding the probability of these bad hits for each time in a consecutive window}

{Item (iii) follows from Berry-Esseen estimates together with our shadowing technique in step 3. We consider the isolated node dynamics, observed through $x\mapsto \sin 2\pi x$, as iid random variables on $\Omega\times \T^{N}$. The fluctuation $\xi^t$ is thus the ensemble average, whose Gaussian nature conforms to the classic Berry-Esseen estimates. Our result follows by comparing orbit-wise the isolated dynamics to the low degree trajectory as in step 3.}

\textbf{Resilience against local perturbation.} 
Consider a minor malfunction in the star network dynamics of one low degree node, which switches to non-hyperbolic behavior. Our reduction technique still decouples the other low degree nodes into typical shadowing orbits and obtains the same control on the fluctuation, up to an $O(L^{-1})$ loss due to the malfunctioning low degree node.

Another major advantage of this resilience of our technique is the generalizability to dynamics on more realistic networks that feature a power-law degree distribution. An important feature of many real-world networks is the power-law degree distribution, that is, the frequency $P(k)$ of nodes of degree $k$ in the network is proportional to $k^{-\beta}$ for some power-law exponent $\beta>0$.  Internet, World Wide Web, and power grids are known to have power-law degree distribution \cite{Chung_2006}. The reduced equation \ref{eq:f_alpha_omega} depends on the effective coupling strength $\alpha_i$, which is determined by $\alpha$ as well as the hub degree; see Theorem B below. The node degrees in the intermediate range give rise to a continuum of dynamical possibilities between the massively connected hub behavior and the almost isolated behavior. Nodes of a certain intermediate degree are bound to lose hyperbolicity in their mean-field reduced behavior, violating the global hyperbolicity assumption.

Our technique enables us to obtain dimensional reduction principle for realistic networks without gaps in degree distribution, such as power-law networks. In fact, many other networks are covered by our result, as long as the locally star-like property is satisfied, see Sections \ref{sec:reduction_loc_star_like} and \ref{sec:eg_loca_star_network}.

\subsection{Dynamics on power-law networks} \label{sec:intro_PL}
We use the Chung-Lu model  \cite{Chung_2006} to produce large power-law networks with well-understood graph-theoretic properties. 
To construct a connected random power-law graph $G_1$, 
we first construct a Chung-Lu random graph $G_0$ from expected degree sequence
\begin{equation} \label{eq:ChungLu_PL_w_sequence}
	\begin{aligned}
		w_i:=& \frac{\beta-2}{\beta-1} w n^{\frac{1}{\beta-1}} \left[n\left(\frac{w(\beta-2)}{m(\beta-1)}\right)^{\beta-1} +i-1\right]^{-\frac{1}{\beta-1}},~~~~i=1,\cdots,n,
	\end{aligned}
\end{equation}
where $n=10^6$ denotes the number of nodes, $\beta=3$ power-law exponent , $w=10$ average expected degree, and $m=10^3$ largest expected degree. On the empty graph consisting of nodes $1,\cdots,n$, we add links between nodes $i$ and $j$ as independent Bernoulli variables with success probability $p_{ij} = w_iw_j/\sum_{k=1}^nw_k$. The resulting graph $G_0$ is not connected, but has a giant component, which we take to be $G_1$. By concentration inequalities, we will see in Lemma \ref{thm:CLlemma5.7} that the actual degree sequence $k_i$ is concentrated at the expected version $w_i$ above; moreover, we will prove that this random power-law graph is locally star-like in the sense that most neighbors of any hub are of low degree, see Theorem B. In our simulation, $G_1$ consists of $N=998168$ nodes, with maximum degree $\Delta_0=979$ and minimum degree 1, and is a connected power-law graph. To showcase its power-law degree distribution, we plot in Figure \ref{fig:PL_degrees} left panel the degree $k$ against the frequency $P(k)$ of nodes of degree $k$ in log-log scale. For comparison, we show a power-law $k^{-3}$ in green. 

\begin{figure}[h]
	\centering 
	\begin{tabular}{cc}
		\includegraphics[height=50mm]{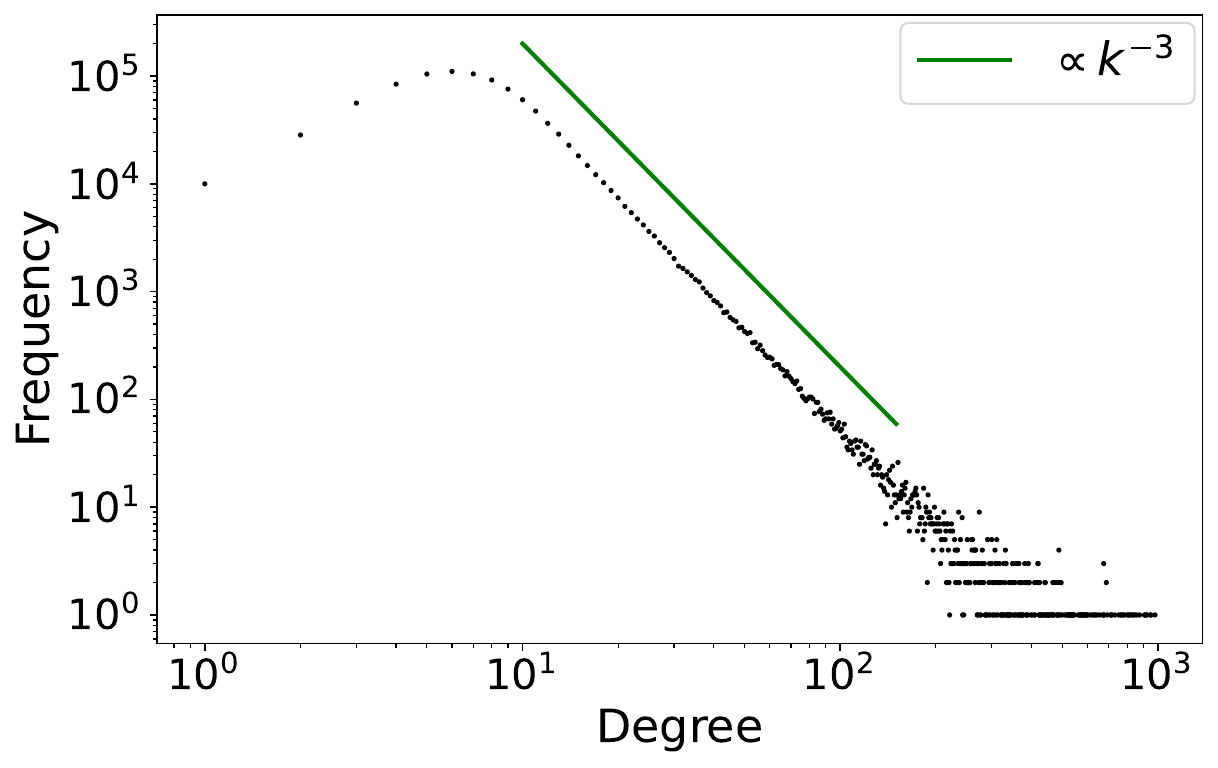}&
		\includegraphics[width=50mm]{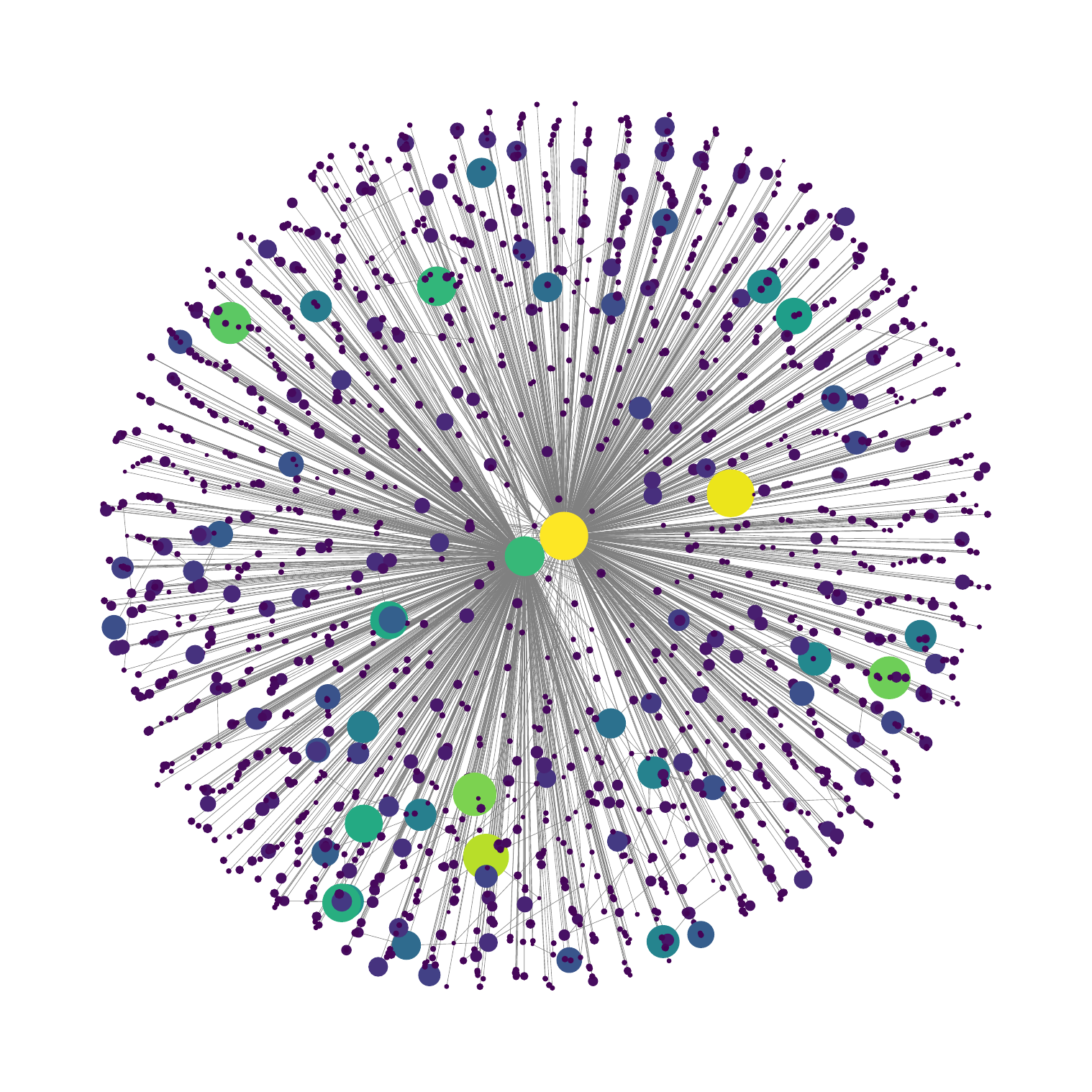}
	\end{tabular}
	\caption{Random power-law network $G_1$ generated from Chung-Lu model on $N=998168$ nodes with power-law exponent $\beta=3$, largest degree $\Delta_0=979$ and lowest degree 1. The left panel shows in log-log scale the degree distribution of $G_1$, that is, degree $k$ in horizontal axis versus the frequency $P(k)$ of nodes of degree $k$. The power-law in green highlights the fact that $P(k)\propto k^{-3}$. The right panel draws the subgraph $S$ of $G_1$ restricted to three nodes of degrees $54, 875, 979$, shown in the center, together with their neighbors in $G_1$ shown as surrounding, with node degrees reflected by size and color. This indicates that most neighbors of a hub in $G_1$ are of low degree.}
	\label{fig:PL_degrees}
\end{figure}

\textbf{Hubs $\Hil_{\Delta}$, low degree nodes $\Lil_{\delta}$, and the star-like index $\nu$.} In $G_1$ most neighbors of any hub $i$ are low degree nodes; i.e., the power-law graph is locally star-like. To illustrate, we draw a subgraph of $G_1$ by selecting three nodes of degrees $54, 875$ and $979$ respectively, shown in the center of the right panel in Figure \ref{fig:PL_degrees}, together with their neighbors shown as surrounding. The colors and sizes of the nodes reflect their degrees in $G_1$. 

Figure \ref{fig:PL_degrees} left panel shows no gap in the degree distribution; in particular, there are no natural scales to distinguish the hubs from low degree nodes, so we have to introduce them by hand. For $G_1$, we put hub scale $\Delta=900$, low degree scale $\delta=100$, and thus define the collection of \textit{$\Delta$-hubs} to be
$$\Hil_{\Delta}:=\left\{ i:  k_i>\Delta \right\}$$
and the collection of \textit{$\delta$-low degree nodes} to be
$$\Lil_{\delta}:= \left\{j:  k_j< \delta \right\};$$
we find $M:=\#\Hil_{\Delta}=7$ hubs and $L:=\#\Lil_{\delta}=995635$ low degree nodes in $G_1$. More generally, the choice of these thresholds $\Delta,\delta$ is a delicate issue and will be treated in detail later. Roughly speaking, a hub is understood as any node $i$ whose degree $k_i$ is comparable with the largest degree $\Delta_0$, whereas a low degree node $j$ has $k_j/\Delta_0 \rightarrow 0$ as  $\Delta_0$ grows.
Denote by $\Nil_i:=\{j: A_{ij}=1\}$ the set of neighbors of node $i$. We define the \textit{star-like index $\nu_i$ at hub $i\in\Hil_{\Delta}$} to be the proportion of low degree neighbors $\Nil_i\cap\Lil_{\delta}$ of hub $i$
$$\nu_i:= \frac{\#\Nil_i\cap\Lil_{\delta}}{ k_i},$$
and the \textit{star-like index $\nu$ of network $G_1$} to be the minimum star-like index among all hubs
\[
\nu:= \min\{\nu_i:i\in\Hil_{\Delta}\}.
\]
In $G_1$, we find $\nu=0.941$; in other words, more than $94.1\%$ of neighbors of each of the seven hubs in $G_1$ are of low degree. As we will prove in Theorem \ref{thm:loc_starlike_PL}, the star-like index $\nu$ of a large power-law network $G$ with exponent $\beta>2$ is close to 1, given the appropriate scales $\Delta,\delta$. 

\subsubsection{Emergent hub dynamics on power-law networks} \label{sec:emergent_PLhub_dynamics}
Using the same isolated dynamics (\ref{eq:f_omega}) and coupling function (\ref{eq:h_omega}) as in Example \ref{eg:intro_star_dynamics}, we fix coupling strength $\alpha=0.9$ and initialize the node states $(x_1^0,x_2^0,\cdots,x_N^0)\in \T^N$ randomly uniformly in $[0,1)$, then iterate the $G_1$-network dynamics (\ref{eq:network_dynamics}). We discard the first $5000$ iterates as transients and collect the next $1000$ iterates. 
In Figure \ref{fig:PL_reduction} , we select three nodes of different degrees $54,875$ and $979$ for the left, middle, and right panels, respectively, and plot in red the hub states $z^t$ against its next states $z^{t+1}$.

Note that the node behaviors vary drastically according to their degree. On the left panel, the dynamics of a node of degree 54 remain contractive; on the central panel, the node of degree 875 appears to have an expanding region; and lastly, on the right panel, the massive hub of degree 979 appears to hover around a deterministic fixed point near $z=0.2$. This shows the variety of node behaviors emergent from the interactions.

To explain, we continue to write $z_i^t$ for $x_i^t$ to emphasize the hubs $i\in\Hil_{\Delta}$. Split the coupling into contributions from low degree and non-low degree neighbors 
$$\sum_{j=1}^N A_{ij}h_{\boldsymbol{\omega}_i^t}(z_i^t,x_j^t)=\sum_{j\in \Nil_i\cap \Lil_{\delta}} h_{\boldsymbol{\omega}_i^t}(z_i^t,x_j^t)+ \sum_{j\in \Nil_i\setminus\Lil_{\delta}} h_{\boldsymbol{\omega}_i^t}(z_i^t,x_j^t);$$ the first term is the main one and sums over $\# \Nil_i\cap \Lil_{\delta}=\nu_i k_i$ contributions, whose mean can be approximated 
\[
\alpha\frac{\nu_i k_i}{\Delta_0}\frac{1}{\nu_i k_i} \sum_{j\in \Nil_i\cap\Lil_{\delta}} h_{\boldsymbol{\omega}_i^t}(z_i^t, x_j^t) \approx   \alpha\frac{\nu_i k_i}{\Delta_0} \int_{\T} h_{\boldsymbol{\omega}_i^t} (z_i^t,x)\mathrm{d}x
\]
as a space average against the Lebesgue measure. 
So we approximate
$$z_i^{t+1} = f_{\alpha_i, \boldsymbol{\omega}_i^t}(z_i^t) + \xi_i^t,$$ 
where the reduced map $f_{\alpha,\omega}$ was defined in eq. (\ref{eq:f_alpha_omega}), 
\[
\alpha_i:= \alpha \nu_i  k_i \Delta_0^{-1}
\]
is the \textit{effective coupling strength} that hub $i$ feels in the network dynamics, and the \textit{mean-field fluctuation} of hub $i$ at time $t$ from initial datum $(\boldsymbol{\omega},x)$ is, in this example,
$$\xi_i^t(\boldsymbol{\omega},x): = \frac{\alpha_i}{\nu_ik_i} \sum_{j \in \Nil_i\cap\Lil_{\delta} }\sin 2\pi x_j^t + \frac{\alpha}{\Delta_0}\sum_{j\in \Nil_i\setminus \Lil_{\delta}} \left[\sin2\pi x_j^t - \sin 2\pi z_i^t - \frac{\boldsymbol{\omega}_i^t}{3.6}\right].$$ 
The first term concerns the low-degree neighbors of the hub $i$, resembles the star case and will be controlled in a similar strategy; the second term gathers all contributions from non-low degree nodes, whose dynamics are not controlled and hence will be estimated simply as $O(1-\nu_i)$ because the sum has only $(1-\nu_i)k_i$ terms, each of which is bounded.

In Figure \ref{fig:PL_reduction}, we plot in green the graph of this one-dimensional system, and the actual node behavior in red, numerically corroborating the mean-field dimensional reduction. Theorem C (i) provides the corresponding mathematical statement.

\begin{figure}[h]
	\centering 
	\includegraphics[height=50mm]{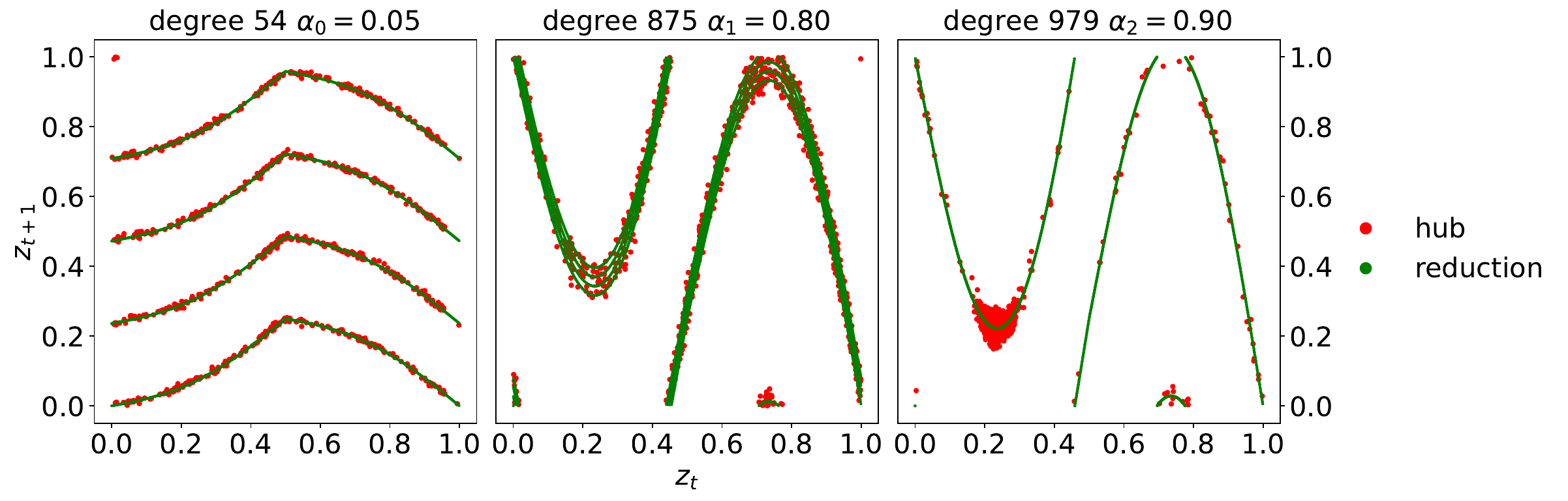}
	\caption{Hub dynamics of various effective coupling strengths. On power-law network $G_1$ with maximum degree $\Delta_0=979$, we run dynamics (\ref{eq:network_dynamics}) at fixed coupling strength $\alpha=0.9$. The left, center, right panels concern three nodes of degree $54,875,979$ respectively; each panel presents in red the node state $z^t$ versus next state $z^{t+1}$. The three nodes experience the mean-field dimensional reduction of effective coupling strengths $\alpha_i$ proportional to their degrees, plotted in green. 
	}
	\label{fig:PL_reduction}
\end{figure}


\subsubsection{System size induced desynchronization} \label{sec:size_desync}
In Figure \ref{fig:PL_excursions} we
plot in grey the time series of the \textit{desynchronization level}
\[
\eta^t=z_{i_0}^t - z_{i_1}^t,
\] 
i.e., difference of states 
between the two most massively connected hubs $i_0$ of degree $\Delta_0=979$ and $i_1$ of degree $978$.  For this we run the $G_1$-network dynamics (\ref{eq:network_dynamics}) at $\alpha=0.9$ with random initial conditions, discard the first $1500$ iterated as transients, and plot for the next $1000$ iterates. Here, large values of $\eta^t$ indicate desynchronization. 
In fact, the simulations in \cite{Ric16} revealed that this desynchronization becomes rare for large system size $N$. In a statistical mechanics system with continuum symmetry, similar size induced desynchronization effects have been characterized in  \cite{Bertini2013}.
\begin{figure}[h]
	\centering 
	\includegraphics[width=120mm]{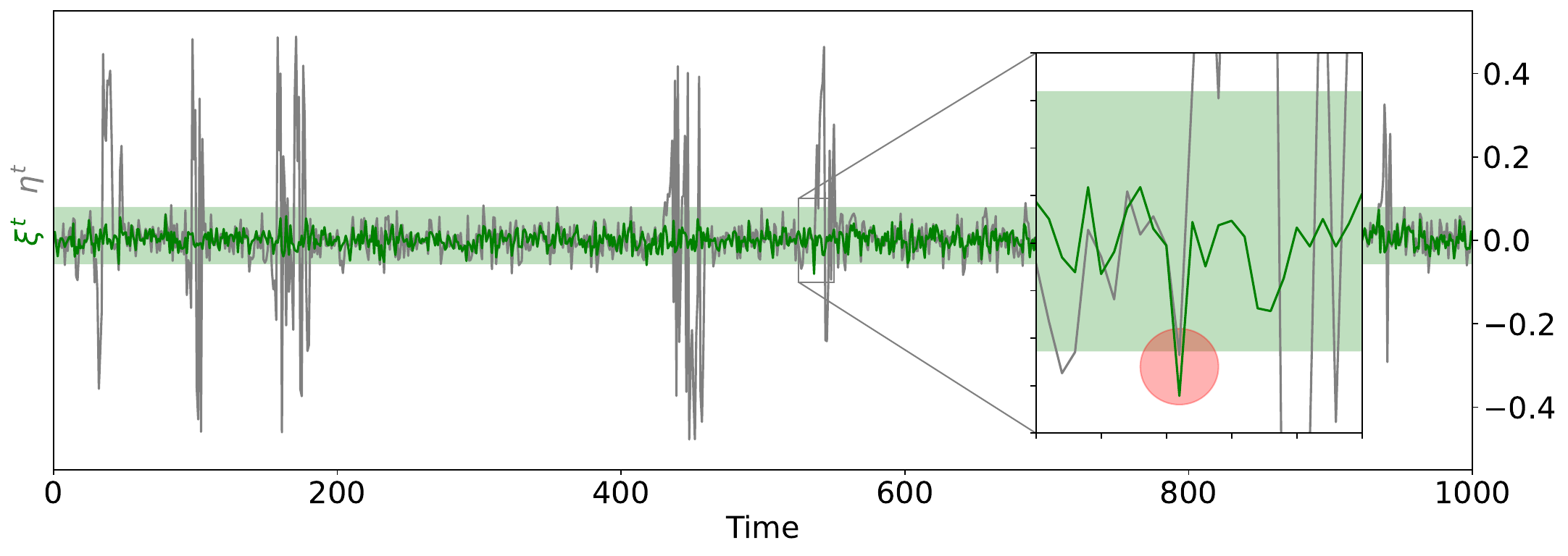} 
	\caption{System size induced desynchronization between the two most massive hubs on a power-law network. 
		We simulate the $G_1$-network dynamics on $N=99816$ nodes at coupling strength $\alpha=0.9$, and
		plot in grey the time series of hub desynchronization level $\eta^t:= z_{i_0}^t-z_{i_1}^t$ between hubs $i_0$ of degree $\Delta_0=979$ and $i_1$ of degree $978$.  Large $\eta^t$ indicate desynchronization episodes. The green time series shows comparatively small fluctuations $\xi_{i_0}^t$, with the green shaded band indicating the trapping region $[z_-,z_+]$ of the reduced dynamics $f_{0.9}$ re-centered at fixed point $z_*=f_{0.9}z_*$. The inset highlights the desynchronization mechanism, namely, an instance of a fluctuation $\xi_{i_0}^t$ sufficiently large to kick the hub $z_{i_0}^t$ out of the trapping region $[z_-,z_+]$ causes a subsequent episode of desynchronization.}
	\label{fig:PL_excursions}
\end{figure}

To explain this system size induced desynchronization, we note that at effective coupling strength $\alpha_i=0.9$, the reduced map  $f_{0.9}=f_{0.9,\omega}$, $\omega=0,1,2,3$ for the hub $i$ reads
$$f_{0.9}(z)= f_0(z) - 0.9 \sin 2\pi z.$$
It has a unique attractive fixed point $z_*\approx 0.224$, 
and nearby points in the trapping region
$[z_-,z_+]$ 
are attracted towards $z_*$ uniformly. 
Outside this region, points may enter regions of expansion by $f_{0.9}$. 

The hubs $i=i_0,i_1$ have  $\alpha_i=\alpha \nu_i\kappa_i\Delta_0^{-1}$ very close to $0.9$, thus remain in $[z_-,z_+]$, and syncronize with $|\eta^t|\leq z_+-z_-$, as long as $\xi_i^t$ is sufficiently small. 
In Figure \ref{fig:PL_excursions} the green time series for $\xi_{i_0}^t$ ocassionally kicks $z_{i_0}^t$ out of $[z_-,z_+]$, the re-centered version $[z_--z_*, z_+ - z_*]$ shown as the green shaded band, resulting in large $\eta^t$. 
The inset highlights one instance of this desynchronization mechanism. 



The precise nature of the system size induced desynchronization is related to the central limit behavior of $\xi_{i}^t$ in Section \ref{sec:gaussian_star}. In fact, the fluctuations $\xi_i^t$ of any hub $i\in\Hil_{\Delta}$ satisfy similar scaling relations and Gaussian statistics as in the star network case, with the star size $L$ replaced by $\nu_ik_i$, see Theorem C below. In fact, these numerical phenomena are also observed in examples beyond the setting of our Theorems, see \cite{baptista2012collective,Ric16}. 



\subsubsection{Statement of main result on the power-law network} \label{sec:intro_informa_PL}
Our next results formalize the numerical observations above, namely, the locally star-like property of large power-law networks and the mean-field dimensional reduction therein with statistically controlled fluctuations.

In the limit $a\to+\infty$ or $a\to 0^+$, we use the Bachmann–Landau big-O notation $f(a)=O(g(a))$ for $\limsup |f(a)|/g(a) <+\infty$, small-o notation $f(a)=o(g(a))$ for $\lim f(a)/g(a)=0$, same-order notation $f(a) \asymp g(a)$ for the conjunction of $f(a)=O(g(a))$ and $g(a)=O(f(a))$, and asymptotic equivalence notation $f(a)\sim g(a)$ for $\lim f(a)/g(a)=1$.

\textbf{Theorem B: power-law network is locally star-like.} \textit{Fix parameters $c_{\mathrm{hub}},\lambda_{\mathrm{ldn}}\in(0,1)$. Consider a large Chung-Lu network $G$ generated from the power-law expected degree sequence given in (\ref{eq:ChungLu_PL_w_sequence}) with $N$ nodes, power-law exponent $\beta>2$, maximum expected degree $m\asymp  N^{\frac{1}{\beta-1}}$, and mean expected degree $w= o(N^{\lambda_{\mathrm{ldn}}\frac{1}{\beta-1}})$. By setting hub and low degree scales
	$$\Delta=c_{\mathrm{hub}} m,~~~~\delta\asymp \Delta^{\lambda_{\mathrm{ldn}}},$$
	we regard nodes of degree above $\Delta$  as hubs and below $\delta$ as low degree. Then, with probability  $1-O(N^{-1/5})$, 
	we have
	$$M\sim w^{\beta-1},~~~~L\sim N,~~~~\nu= 1 - O\left(N^{-\lambda_{\mathrm{ldn}}\frac{\beta-2}{\beta-1}} w^{\beta-2}\right),$$
	where $M$ is the number of hubs, $L$ the number of low degree nodes, and $\nu$ the star-like index of $G$.}

\textbf{Main ideas of proof for Theorem B.} From concentration inequalities, namely, Chernoff bounds, \cite[Lemma 5.7]{Chung_2006} can be adapted to show that the entire actual degree sequence is concentrated around the expected version. By counting, we show the locally star-like property for the expected degree sequence, and by concentration, we obtain the same for the actual degree sequence. The subtlety lies in the careful choice of the hub and low degree scales $\Delta,\delta$ respectively. The precise statement and full proof can be found in Theorem \ref{thm:loc_starlike_PL}.

\textbf{Theorem C: hub dynamics in power-law network random dynamical system.}
\textit{ Consider dynamics (\ref{eq:network_dynamics}) on a large power-law network $G$ as in Theorem B and $f_{\omega},h_{\omega}$ as in Example \ref{eg:intro_star_dynamics}. Then, with initial conditions following the uniform distribution on $\T^N$, at coupling strength $\alpha = o\left(  \min\{ N^{\lambda_{\mathrm{ldn}}\frac{\beta-2}{\beta-1}} , N^{(1-\lambda_{\mathrm{ldn}})/(2\beta-2)}\} \right)$, each hub $i\in\Hil_{\Delta}$ admits the mean-field dimensional reduction $f_{\alpha_i,\omega}$ at effective coupling strength $\alpha_i$ defined to be
	\begin{align*}
		f_{\alpha_i, \omega}(z):= f_{\omega}(z) + \alpha_i\int_{\T} h_{\omega}(z, x)\mathrm{d}x\mod 1,~~~~\alpha_i:= \alpha \nu_i  k_i\Delta_0^{-1}.
	\end{align*}
}
More precisely, we have:
\begin{enumerate}
	\item[$\mathrm{(i)}$] \textbf{Almost sure reduction in asymptotic time:} for any
	$$\varepsilon\geq \max\left\{ \frac{17}{2} \alpha (1-\nu), 34\pi\alpha^2\delta / \Delta \right\},~~~~1-\nu=O\left(N^{-\lambda_{\mathrm{ldn}}\frac{\beta-2}{\beta-1}} w^{\beta-2}\right),~~\Delta \asymp N^{\frac{1}{\beta-1}},~~\delta \asymp \Delta^{\lambda_{\mathrm{ldn}}} \asymp N^{\frac{\lambda_{\mathrm{ldn}}}{\beta-1}},$$
	we have
	\[
	\mathrm{Prob} \left\{ (\boldsymbol{\omega},x): \liminf_{T\to+\infty} \frac{1}{T}\sum_{t=0}^{T-1} \One_{\max\{|\xi_i^t(\boldsymbol{\omega},x)|:i\in\Hil_{\Delta}\}\leq \varepsilon} \geq 1-  \exp\left(-\nu\Delta\varepsilon^2\alpha^{-2}/19\right) \right\} =1;
	\]
	\item[$\mathrm{(ii)}$] \textbf{Small fluctuation in long time windows:} in time windows 
	$$I_{t_0}^T:=\{t_0,\cdots,t_0+T-1\},~~~~T\geq \exp(\Delta^{1-2\kappa}),~~t_0\in\N,$$
	we have successively small fluctuations
	\[
	\mathrm{Prob} \left\{ (\boldsymbol{\omega},x): \max_{i\in\Hil_{\Delta}} \max_{t\in I_{t_0}^T} |\xi_i^t(\boldsymbol{\omega},x)| \leq 3\alpha \Delta^{-\kappa}  \right\} \geq 1-\exp(-\Delta^{1-2\kappa}), ~~~~0<\kappa <\min\left\{1/2, 1-\lambda_{\mathrm{ldn}}, \lambda_{\mathrm{ldn}}\frac{\beta-2}{\beta-1}\right\};
	\]
	\item[$\mathrm{(iii)}$] \textbf{Gaussian fluctuations:} at any time $t\in\N$, the fluctuation $\xi_i^t$ is approximately Gaussian, i.e.,
	$$\mathrm{Prob}\{(\boldsymbol{\omega},x):\xi_i^t(\boldsymbol{\omega},x)\leq s\}  \in [F_i(s-s_1) - s_2, F_i(s+s_1)+s_2],~~~~i\in\Hil_{\Delta},~~s\in\R,$$
	where  $F_i$ denotes the cdf of the normal distribution with zero mean and variance $\alpha_i^2/(2\nu_i k_i)$, and the correction constants are $$s_1=O(N^{-\frac{1-\lambda_{\mathrm{ldn}}}{\beta-1}} + N^{-\lambda_{\mathrm{ldn}} \frac{\beta-1}{\beta-2}} w^{\beta-2}),~~~~s_2=O(N^{-\frac{1}{2\beta-2}}).$$
\end{enumerate}

\textbf{Main ideas of proof for Theorem C.} We combine Theorems A and B. The locally star-like property from Theorem B ensures that the hub and low degree scales are separated, allowing the low degree orbit to be shadowed by an isolated orbit; again by Theorem B, most hub neighbors are of low degree and hence well-controlled by the shadowing technique from Theorem A. When excluding the bad sets for all hubs, we use Theorem B to ensure that there are only $M\sim w^{\beta-1}$ hubs. The detailed proof of Theorem C can be found in Appendix \ref{sec:proof_theorem_C}.

Our abstract result Theorem \ref{thm:loc_star} treats the general case on a locally star-like network. In Section \ref{sec:eg_loca_star_network} the locally star-like property is verified for the random power-law network model.


We organize the rest of the paper as follows. Section \ref{sec:reduction_star} spells out all hypotheses, and develops the abstract dimensional reduction principle on the star network, which showcases the essential arguments of our shadowing plus typicality technique. Section \ref{sec:reduction_loc_star_like} introduces the notion of locally star-like networks and extends the dimensional reduction technique from star to locally star-like networks. Section \ref{sec:eg_loca_star_network} constructs the random power-law network model and proves that it is locally star-like, Theorem B. Finally in Section \ref{sec:eg_contractions} we return to the case of iid random iteration of contractions as node dynamics and complete the proof of Theorems A and C.

\textbf{Acknowledgments.} 
We thank Matthias Wolfrum and Edmilson Roque dos Santos for valuable discussions and comments. We gratefully acknowledge support of the UK Royal Society through a Newton Advanced Fellowship  (NAF$\backslash$R1$\backslash$180236). TP and ZB were supported in part by FAPESP Grant Nos. Cemeai 2013/07375-0, 2018/26107-0, 2021/11091-3, and by the Serrapilheira Institute (Grant No. Serra-1709-16124). TP thanks the Humboldt Foundation for support through a Bessel Fellowship. JSWL has been supported by the EPSRC (grants EP/Y020669/1 and EP/S023925/1) and the JST (Moonshot R \& D Grant  JPMJMS2021 of  IRCN (Tokyo)). He also thanks GUST (Kuwait) for research support.

\section{Dimensional reduction} 


Fix a probability preserving transformation $(\Omega,\Fil,\PP, \theta)$ as the common model for environmental noise and a network $G$ with maximum degree $\Delta_0$. On each node $i=1,,\cdots,N$, the local dynamics are given by a random dynamical system $\varphi_i$ on the circle $\T=\R/\Z=[0,1]/0\sim 1$
$$\varphi_i:\N\times \Omega\times\T\to\T,~~~~(t,\omega,x)\mapsto \varphi_i(t,\omega,x),$$
where $\varphi_i$ 
\begin{enumerate}
	\item[(i)] is measurable with respect to $2^{\N}\otimes \Fil\otimes \Bil(\T)$ and $\Bil(\T)$;
	\item[(ii)] satisfies the cocycle property over $\theta$, namely,  $\varphi_i(0,\omega,\cdot)=\mathrm{id}_{\T}$ for each $\omega\in\Omega$ and $\varphi_i(t+s,\omega,\cdot)=\varphi_i(t,\theta^s\omega,\cdot)\circ\varphi_i(s,\omega,\cdot)$ for all $s,t\in\N$ and $\omega\in\Omega$.
\end{enumerate}

Each node $i$ influences its neighbor $j\in\Nil_i$ with contribution $h(\omega,x_j,x_i)$ given by a random pairwise coupling function $h:\Omega\times\T^2\to\R$. At coupling strength $\alpha>0$, the network dynamical system is the RDS
\begin{equation}\label{eq:Gnetwork_dynamics}
	\Phi_{\alpha}:\N\times\Omega\times \T^N\to\T^N,
\end{equation}
where each node $(\Phi_{\alpha}(t,\omega,x))_i=x_i^t$ evolves by
\begin{equation}\label{eq:Ghub_dynamics}
	x_i^{t+1} = \varphi_i(1,\theta^t\omega,x_i^t) + \alpha\cdot \frac{1}{\Delta_0} \sum_{j=1}^N A_{ij}h(\theta^t\omega,x_i^t,x_j^t)\mod 1,~~~~i=1,\cdots,N,
\end{equation}
where $A=(A_{ij})_{i,j}$ is the same adjacency matrix of network $G$ as in Eq. (\ref{eq:network_dynamics}).
Note that the trajectory $\{x_j^t:t\in\N\}$ of low degree node $j\in\Lil_{\delta}$ is a pseudo-orbit of $\varphi_j$ with error at each time step bounded by $\alpha\Delta_0^{-1} \delta \|h(\theta^t\omega,\cdot,\cdot)\|_{C^0}$.

We assume the following conditions on the random dynamical system (\ref{eq:Gnetwork_dynamics}):
\begin{enumerate}
	\item[(R1)] Independent and identically distributed node maps: the random variables $\omega\mapsto \varphi_i(1,\theta^t\omega,\cdot)$, $i=1,\cdots,N$, $t\in\N$ take values in the space of continuous circle maps, have the same distribution, and are independent in time $t$ and node $i$; 
	
	\item[(R2)] Unique stationary measure of node dynamics: the isolated node dynamics $\varphi_i$, $i=1,\cdots,N$ admits a unique stationary measure $m$;
	
	\item[(R3)] $C^4$ pairwise coupling maps: $h(\theta^t\omega,\cdot,\cdot)$, $t\in\N$ share the same distribution in the space $ C^4(\T^2;\R)$ of $C^4$-smooth maps $\T^2\to\R$ and are independent in time $t$. 
\end{enumerate}

\begin{rem}
	In fact, we may allow in (R1--2) that the low-degree node maps $\varphi_j$ share the same map distribution with unique stationary measure $m$, while the other node maps enjoy different distributions. The smoothness $C^4$ in (R3) is assumed to ensure sufficient decay of the Fourier coefficients.  In (R3) we may also allow the coupling maps $h_i(\omega,x_i,x_j)$ to vary among nodes $i$, as long as all neighbors $j\in\Nil_i$ of any node $i$ influence it via the same coupling map $h_i$.  Under assumptions (R1--3) the network dynamical system $\Phi_{\alpha}$ is an iid random iteration of continuous maps $\Phi_{\alpha}(1,\theta^t\omega,\cdot)$ on $\T^{N}$.
\end{rem}

\begin{rem}[Notation of $\omega$]
	To avoid notational cluter, we have changed the notation of noise. Now we only use the simple font $\omega$ as an element of the abstract probability space $\Omega$ to denote noise realization. The fact that the noise is iid in time $t=0,1,\cdots$ and node coordinates $i=1,\cdots,N$ is reflected in node dynamics $\varphi_i(t,\omega,\cdot)$ and coupling function $h(\theta^t\omega,\cdot,\cdot)$.
\end{rem}

\subsection{Reduction on the star network}\label{sec:reduction_star}
When $G$ is a star graph consisting of 1 hub and $L=N-1$ low degree nodes, the hub 
evolves by
\begin{equation}\label{eq:hub}
	z^{t+1}= \varphi_1(1,\theta^t\omega,z^t)+ \alpha\cdot\frac{1}{L}\sum_{j=2}^N h(\theta^t \omega,z^t,x_j^t)\mod 1,
\end{equation}
and each  low-degree node 
evolves by
\begin{equation}\label{eq:ldn}
	x_j^{t+1} = \varphi_j(1,\theta^t\omega,x_j^t)  + \alpha \cdot \frac{1}{L} h(\theta^t\omega,x_j^t,z^t) \mod1,~~~~j=2,\cdots,N.
\end{equation}

The mean-field reduction seeks to approximate the mass action of the $L$  low-degree nodes on the hub $z$ by the space average against $m$ on $\T$
$$\alpha \frac{1}{L} \sum_{j=2}^N h(\omega,z,x_j)= \alpha\int_{\T} h(\omega,z,y)\mathrm{d} m(y) +  \xi_m(\omega,x),$$
where the corresponding fluctuation $\xi_m(\omega,x)$ is given by
\begin{equation}\label{eq:fluctuationXi}
	\alpha^{-1}\xi_m(\omega,x)= \frac{1}{L} \sum_{j=2}^N h(\omega,z,x_j) - \int_{\T} h(\omega,z,y)\mathrm{d}m(y).
\end{equation}

This way, the hub behavior in the $N$-dimensional network dynamics on the undirected star becomes reduced to (approximated by) the one-dimensional random system $\varphi_{\alpha,m}:\N\times\Omega\times \T\to \T$, recursively defined by
\begin{equation}\label{eq:reducedhub}
	\varphi_{\alpha,m}(t+1,\omega,z)=\varphi_1(1,\theta^t\omega,\varphi_{\alpha,m}(t,\omega,z)) +\alpha\int_{\T} h(\theta^t\omega,\varphi_{\alpha,m}(t,\omega,z),y)\mathrm{d} m(y).
\end{equation}

In this notation, the hub evolution becomes
$$z^{t+1}= \varphi_{\alpha,m}(1,\theta^t\omega,z^t) + \xi_m(\theta^t\omega,x^t).$$

\begin{defn}\label{def:reduction}
	We say that the \textit{hub (\ref{eq:hub}) admits $\varepsilon$-reducion to $\varphi_{\alpha,m}$ on initial data $(\omega,x)\in\Omega\times \T^N$ at time $t$} if 
	$$| \xi_m(\theta^t\omega,\Phi_{\alpha}(t,\omega,x))|\leq \varepsilon,$$
and that it admits $\varepsilon$-reduction to $\varphi_{\alpha,m}$ on initial data $(\omega,x)\in\Omega\times \T^N$ with exceptional frequency at most $\rho$ if 
	\[
	\limsup_{T\to+\infty} \frac{1}{T} \# \left\{t\in[0,T-1]: | \xi_m(\theta^t\omega,\Phi_{\alpha}(t,\omega,x))|>\varepsilon\right\} \leq  \rho.
	\]
\end{defn}

\begin{theorem}[Reduction theorem on a star]\label{thm:star}
	Suppose (R1--3) hold for the star network random dynamical system (\ref{eq:hub}-\ref{eq:ldn})  on $N-1$  low-degree nodes and one hub node at coupling strength $\alpha>0$. Let
	initial data $(\omega,x)\in\Omega\times \T^{N}$ be such that the network trajectory $\{x^t=\Phi_{\alpha}(t,\omega,x): t\in\N \}$ admits an $m$-typical shadowing orbit in the low degree coordinates; more precisely, there is $(\omega_{{s}},x_{{s}})\in\Omega\times\T^{N}$ satisfying
	\begin{equation}\tag{Shadowing}
	\sup_{t\in\N}\max_{j=2,\cdots,N}	d_{\T}(x_j^t, \varphi_j(t,\omega_{{s}},x_{{s},j}))\leq \varepsilon_s \left(\alpha (N-1)^{-1} \sup_{t\in\N } \|h(\theta^t\omega,\cdot,\cdot)\|_{C^0}\right); 
	\end{equation}
	\begin{equation}\tag{Typicality}
		\frac{1}{T}\sum_{t=0}^{T-1} \delta_{\varphi_2(t,\omega_{{s}},x_{{s},2})} \otimes\cdots\otimes \delta_{\varphi_N(t,\omega_{{s}},x_{{s},N})}\xrightarrow[T\to+\infty]{\text{weak}^*} m^{\otimes (N-1)},
	\end{equation}
	where $d_{\T}$ denotes the distance on the circle, the shadowing precision $\delta_s\mapsto\varepsilon_s (\delta_s )$ is a $\R_+$-valued function converging to 0 as $\delta_s $ tends to 0. Then, for any error tolerance 
	$$\varepsilon\geq  \max\left\{ \alpha (N-1)^{-1/2},4\alpha \sup_{t\in\N }|h(\theta^t \omega,\cdot,\cdot)|_{\mathrm{Lip}} \varepsilon_s \left( \alpha (N-1)^{-1} \sup_{t\in\N } \|h(\theta^t\omega,\cdot,\cdot)\|_{C^0} \right)\right\},$$
	the hub behavior (\ref{eq:hub}) admits $\varepsilon$-reducion to $\varphi_{\alpha,m}$ on initial data $(\omega,x)\in\Omega\times \T^N$ with exceptional asymptotic frequency at most $\rho$ with
	$$ \rho(\varepsilon,\omega)=D(\varepsilon,\omega) \exp(-(N-1)\varepsilon^2 \alpha^{-2} c(\omega)),~~~~D(\varepsilon,\omega)\asymp \varepsilon^{-1} \sup_{t\in\N }\|h(\theta^t\omega,\cdot,\cdot)\|_{C^4},~~c(\omega)=\frac{450}{\sup_{t\in\N } \|h(\theta^t\omega,\cdot,\cdot)\|_{C^4}},$$
	where the constants $D(\varepsilon,\omega)$ and $c(\omega)$ are independent of $N$ and $x$.
\end{theorem}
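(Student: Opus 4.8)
The plan is to reduce the statement about the hub's mean-field fluctuation $\xi_m(\theta^t\omega, \Phi_\alpha(t,\omega,x))$ to two clean ingredients: a large-deviation estimate controlling the measure of a ``bad set'' in the isolated product dynamics, and the ergodic averaging provided by the (Typicality) hypothesis, which is then transported back to the true network orbit via the (Shadowing) hypothesis. First I would split the fluctuation into its shadowing approximation plus an error term. Using (Shadowing), each low-degree coordinate $x_j^t$ differs from $\varphi_j(t,\omega_s,x_{s,j})$ by at most $\varepsilon_s(\alpha(N-1)^{-1}\sup_t\|h(\theta^t\omega,\cdot,\cdot)\|_{C^0})$ uniformly in $t$; since $h$ is $C^4$ (hence Lipschitz in the second argument), this propagates to a bound $\alpha\sup_t|h(\theta^t\omega,\cdot,\cdot)|_{\mathrm{Lip}}\,\varepsilon_s(\cdots)$ on the difference between $\xi_m$ evaluated along the true orbit and $\xi_m$ evaluated along the shadowing orbit. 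The hypothesis on $\varepsilon$ is chosen precisely so that this error is at most $\varepsilon/4$, leaving us to control $|\tilde\xi^t| := \alpha^{-1}|\frac1L\sum_{j=2}^N h(\theta^t\omega, z^t, \varphi_j(t,\omega_s,x_{s,j})) - \int_\T h(\theta^t\omega,z^t,y)\,dm(y)|$ at the threshold $3\varepsilon/(4\alpha)$.

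The second step is the large-deviation bound. For fixed noise realization and fixed value of the hub argument $z$, the quantity $\frac1L\sum_{j=2}^N h(\theta^t\omega,z,Y_j) - \int h(\theta^t\omega,z,y)\,dm(y)$ is an average of $L$ independent, bounded, mean-zero random variables (with $Y_j$ distributed according to $m$), so Hoeffding or a Bernstein-type inequality gives a bound of the form $\exp(-c L \varepsilon^2\alpha^{-2})$ on the probability of a deviation exceeding $\varepsilon$. To make this uniform in $z$ I would exploit the $C^4$-smoothness via Fourier expansion: writing $h(\omega,z,y)=\sum_k \hat h_k(\omega,z) e^{2\pi i k y}$ with rapidly decaying coefficients, the fluctuation is a superposition of the empirical-Fourier-coefficient deviations $\frac1L\sum_j e^{2\pi i k Y_j}$, each of which concentrates; the $C^4$ decay ensures the tail of the $k$-sum is negligible and produces the dependence of $D$ and $c$ on $\|h\|_{C^4}$. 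This identifies the ``bad set'' $B_t\subset\T^{N-1}$ of product-configurations producing $|\tilde\xi^t|>3\varepsilon/(4\alpha)$ and shows $m^{\otimes(N-1)}(B_t)\leq D(\varepsilon,\omega)\exp(-(N-1)\varepsilon^2\alpha^{-2}c(\omega))$, with the stated constants obtained by bookkeeping.

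The third step converts the measure bound into an asymptotic-frequency bound along the shadowing orbit. By (Typicality), the empirical measures $\frac1T\sum_{t=0}^{T-1}\delta_{(\varphi_2(t,\omega_s,x_{s,2}),\dots,\varphi_N(t,\omega_s,x_{s,N}))}$ converge weak$^*$ to $m^{\otimes(N-1)}$; applying this against (a smooth approximation of) the indicator of the bad set gives $\limsup_{T}\frac1T\#\{t<T: (\varphi_j(t,\omega_s,x_{s,j}))_j\in B_t\}\leq m^{\otimes(N-1)}(B_t)$ — with the minor technical wrinkle that $B_t$ varies with $t$ through the noise $\theta^t\omega$ and through $z^t$, which I would handle by taking a common enlarged bad set (a uniform-in-$t$ version, legitimate because $\sup_t\|h(\theta^t\omega,\cdot,\cdot)\|_{C^4}$ appears in the constants) and noting that weak$^*$ convergence on the fixed compact $\T^{N-1}$ suffices. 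Combining with Step 1, the true hub orbit fails $\varepsilon$-reduction at time $t$ only if the shadowing orbit lies in the (enlarged) bad set at time $t$, so the exceptional asymptotic frequency is at most $\rho(\varepsilon,\omega)$.

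I expect the main obstacle to be Step 3, specifically the interplay between the $t$-dependence of the bad set (through $\theta^t\omega$ and through the uncontrolled hub value $z^t$) and the single weak$^*$ limit furnished by (Typicality): one must either construct a genuinely $t$-uniform bad set whose measure still enjoys the exponential bound, or upgrade the ergodic averaging to handle a moving target. A secondary difficulty is making the Fourier-based concentration estimate in Step 2 uniform in $z$ while tracking the explicit constants $450$ and the $\|h\|_{C^4}$-dependence claimed in the statement; this is routine in principle but requires care with the decay rate of $\hat h_k$ and the union over Fourier modes. The smoothness of $h$ and the boundedness of $\T^{N-1}$ are what make both issues tractable.
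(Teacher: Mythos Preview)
Your proposal is correct and follows essentially the same approach as the paper: shadowing to replace the true low-degree orbit by the isolated one (cost $\varepsilon/4$), Fourier expansion of $h$ in the second variable to split into high modes (controlled by $C^4$ decay, cost $\varepsilon/2$) and finitely many low modes, and Hoeffding plus (Typicality) to bound the visit frequency to the per-mode bad sets (cost $\varepsilon/4$). Your anticipated ``main obstacle'' --- the $t$-dependence of $B_t$ through $z^t$ and $\theta^t\omega$ --- dissolves once you carry out your own Step~2 cleanly: the per-mode bad sets $\{|\frac{1}{L}\sum_j e^{2\pi i n_2 y_j} - \int e^{2\pi i n_2 y}\,dm| > \varepsilon_b\}$ depend only on $n_2$, while all dependence on $z^t$ and $\theta^t\omega$ is absorbed into the Fourier coefficients $\hat h_{n_2}(\theta^t\omega,z^t)$, which are bounded uniformly in $t$ by $\sup_t\|h(\theta^t\omega,\cdot,\cdot)\|_{C^4}$ --- so no ``enlarged'' or ``moving-target'' bad set is needed, and (Typicality) applies directly to each of the finitely many fixed sets.
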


\begin{rem} \label{rem:star_typicality}
	(Typicality) condition assumes that the shadowing orbit $(\varphi_2(t,\omega_{{s}},x_{{s},2}),\cdots,\varphi_N(t,\omega_{{s}},x_{{s},N}))$ is $m^{\otimes L}$-typical, $L=N-1$, i.e., follows the distribution $m^{\otimes L}$ given by the unique stationary meausre. Note that Theorem \ref{thm:star} (Shadowing) plus (Typicality) for one initial data $(\omega,x)\in \Omega\times \T^{N}$ and concludes the reduction for this particular instance of initial data. 
	To obtain Shadowing and Typicality conditions for $\PP\otimes m^{\otimes N}$-a.e. $(\omega,x)\in\Omega\times \T^N$ as in Theorem A, one tends to encounter the following scenario. By local hyperbolicity of node dynamics $\varphi_i$,  
	the shadowing technique produces some initial datum $(\omega_s,x_s)$, whose random orbit $\varphi_i(t,\omega_s,x_s)$ achieves (Shadowing). And we wish to establish (Typicality) for $\varphi_i(t,\omega_s,x_s)$. For this, the existence and uniqueness of stationary measure $m$ from (R2) ensures, see \cite[Theorem 16.4]{Bhattacharya2022}, the ergodicity of the Markov chain associated to the iid random iteration $\varphi_i$ on each node $i=1,\cdots,N$ with initial distribution $m$.
	\begin{enumerate}
		\item By Breiman's ergodic theorem \cite{Breiman1960}, for each $x_i\in\T$, there is $\Omega_{x_i}\subseteq\Omega$ with $\PP(\Omega_{x_i})=1$ such that the random orbit $\varphi_i(t,\omega,x_i)$ is $m$-typical for every $\omega\in \Omega_{x_i}$. However, this typicality falls short because there may be pathological situations where the shadowing initial datum $(\omega_s,x_{s,i})$ is such that $\omega_s \notin \Omega_{x_{s,i}}$.
This is the case with random expanding or hyperbolic maps as node dynamics.

		\item For random \textit{uniform contractions}, such as those in Example \ref{eg:intro_star_dynamics},
		 Theorem \ref{thm:typicality_randomcontractions} establishes a uniform $\Omega_*$ with $\PP(\Omega_*)=1$ such that the random orbit $\varphi_i(t,\omega,x_i)$ is $m$-typical for every $(\omega,x_i)\in \Omega_*\times\T$. Moreover, we show that $(\omega_s,x_{s})=(\omega,x)$, see Corollary \ref{cor:star_contractions}. Thus, Shadowing and Typicality conditions are achieved for every $(\omega,x)\in \Omega_*\times\T^N$.
	\end{enumerate} 
\end{rem}

\begin{rem}
	If $\PP\otimes m^{\otimes N}$-a.e. $(\omega,x)\in\Omega\times \T^N$ admits shadowing intial data $(\omega_{{s}},x_{{s}})$ satisfying (Shadowing) and (Typicality), then the $\varepsilon$-reduction holds $\PP\otimes m^{\otimes N}$-almost surely by Theorem \ref{thm:star}. 
	In the case of Example \ref{eg:intro_star_dynamics}, we have $(\omega,x)= (\omega_{{s}},x_{{s}})$ and hence obtain
	(ii) Small fluctuation in long time windows and (iii) Gaussian fluctuations, as in Theorem A. See Appendix \ref{appendix:ac_shad} for more details.
\end{rem}

\begin{rem}
As in the usual Shadowing Lemmas, $\epsilon_s(\delta_s)$ in Theorem \ref{thm:star} is the shadowing precision which upper bounds the distance between the shadowing and pseudo orbits; it is a function of the error tolerance $\delta_s$ of the pseudo orbit. In Example \ref{eg:intro_star_dynamics}, we have $\delta_s=\alpha L^{-1}$ and $\epsilon_s(\delta_s) = \frac{\delta_s}{1-\lambda}= \frac{\alpha L^{-1}}{1-\lambda}$, where $\lambda=1/2$ is the contraction rate, see also Corollary \ref{cor:star_contractions}. Our result relies on the hyperbolicity of the low degree node dynamics alone in terms of the shadowing property, which allows us to overcome the shortcoming i) of \cite{PvST20} as discussed in Introduction.
\end{rem}

\subsubsection{Decay of Fourier coefficients}
We start with some preparations in Fourier analysis. Write the Fourier series of the coupling maps $h(\omega,\cdot,\cdot)\in C^4(\T^2,\R)$
$$h(\omega,x_1,x_2)= \sum_{(n_1,n_2)\in \Z^2}a^{\omega}_{(n_1,n_2)} e^{2\pi i(n_1x_1+n_2x_2)},$$
where the Fourier coefficients are defined by
\begin{align*}
	a^{\omega}_{(n_1,n_2)}:= &\int_0^1 \int_0^1 e^{-2\pi i(n_1x_1+n_2x_2)} h(\omega,x_1,x_2)\mathrm{d}x_1\mathrm{d}x_2.
\end{align*}

We gather some basic facts of decay of Fourier coefficients for smooth functions from \cite[Theorem 3.3.9]{Grafakos2014}.
\begin{lemma}[Decay of Fourier coefficients]\label{cor:decayFouriercoeff}
	For $h(\omega,\cdot,\cdot)\in C^4(\T^2;\R)$, $(n_1,n_2)\in\Z^2$ and multi-index $(m_1,m_2)\in\N^2$ with $\sum_{j=1}^2 m_j\leq 4$, we have
	$$ (2\pi)^{m_1+m_2}|n_1|^{m_1} |n_2|^{m_2} |a^{\omega}_{(n_1,n_2)}|\leq \|h(\omega,\cdot,\cdot)\|_{C^4}.$$
	In particular, we obtain
	\begin{align*}
		|a^{\omega}_{(0,0)}|\leq &\|h(\omega,\cdot,\cdot)\|_{C^4};\\
		|a^{\omega}_{(0,n_2)}|\leq & \frac{1}{(2\pi)^4 n_2^4} \|h(\omega,\cdot,\cdot)\|_{C^4},~~~~\forall n_2\neq 0;\\
		|a^{\omega}_{(n_1,0)}|\leq & \frac{1}{(2\pi)^4 n_1^4} \|h(\omega,\cdot,\cdot)\|_{C^4},~~~~\forall n_1\neq 0;\\
		|a^{\omega}_{(n_1,n_2)}|\leq& \frac{1}{(2\pi)^4 n_1^2 n_2^2}\|h(\omega,\cdot,\cdot)\|_{C^4},~~~~\forall n_1\neq0\text{ and }n_2\neq 0,
	\end{align*}
	so that its Fourier series 
	\begin{align*}
		h(\omega,x_1,x_2) = \sum_{(n_1,n_2)\in\Z^2} a^{\omega}_{(n_1,n_2)} e^{2\pi i (n_1x_1+n_2x_2)} =& a^{\omega}_{(0,0)} + \sum_{n_2\neq 0} a^{\omega}_{(0,n_2)} e^{2\pi i n_2x_2} + \sum_{n_1\neq 0} a^{\omega}_{(n_1,0)} e^{2\pi i n_1x_1}  \\
		&+ \sum_{n_1\neq 0 \text{ and } n_2\neq 0} a^{\omega}_{(n_1,n_2)} e^{2\pi i (n_1x_1+n_2x_2)} 
	\end{align*}
	converges absolutely and uniformly.
\end{lemma}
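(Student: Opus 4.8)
The plan is to establish the decay estimate by repeated integration by parts in the defining integral for $a^\omega_{(n_1,n_2)}$, exploiting the $1$-periodicity of $h(\omega,\cdot,\cdot)$ on $\T^2$ to kill all boundary terms, and then to deduce the absolute and uniform convergence of the Fourier series from the resulting bound via the Weierstrass $M$-test. This is the two-dimensional specialization of \cite[Theorem 3.3.9]{Grafakos2014}, so no new idea is needed; I sketch it for completeness.

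First I would fix $\omega$ and a multi-index $(m_1,m_2)\in\N^2$ with $m_1+m_2\leq 4$. The case $(n_1,n_2)=(0,0)$ with $(m_1,m_2)=(0,0)$ is just $|a^\omega_{(0,0)}|\leq\|h(\omega,\cdot,\cdot)\|_{C^0}\leq\|h(\omega,\cdot,\cdot)\|_{C^4}$, and when one of $n_1,n_2$ vanishes while the matching $m_\ell$ is positive the asserted inequality is trivially true, so the substantive case is $n_1,n_2\neq 0$. There, integrating by parts $m_1$ times in $x_1$ and $m_2$ times in $x_2$, and using that $\partial_{x_1}^{a}\partial_{x_2}^{b}h(\omega,\cdot,\cdot)$ is continuous and $1$-periodic in each variable for $a+b\leq 4$ so that every boundary term cancels, each step in $x_\ell$ contributes a factor $(2\pi i n_\ell)^{-1}$ together with one extra derivative on $h$, giving
\[
a^\omega_{(n_1,n_2)}=\frac{1}{(2\pi i n_1)^{m_1}(2\pi i n_2)^{m_2}}\int_0^1\int_0^1 e^{-2\pi i(n_1x_1+n_2x_2)}\,\partial_{x_1}^{m_1}\partial_{x_2}^{m_2}h(\omega,x_1,x_2)\,\mathrm{d}x_1\mathrm{d}x_2 .
\]
Taking absolute values, using $|e^{-2\pi i(n_1x_1+n_2x_2)}|=1$ and bounding the integral over the unit square by $\|\partial_{x_1}^{m_1}\partial_{x_2}^{m_2}h(\omega,\cdot,\cdot)\|_{C^0}\leq\|h(\omega,\cdot,\cdot)\|_{C^4}$ (legitimate since $m_1+m_2\leq 4$) yields
\[
(2\pi)^{m_1+m_2}|n_1|^{m_1}|n_2|^{m_2}\,|a^\omega_{(n_1,n_2)}|\leq\|h(\omega,\cdot,\cdot)\|_{C^4},
\]
and the four displayed special cases follow by choosing $(m_1,m_2)\in\{(0,0),(0,4),(4,0),(2,2)\}$.

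Finally, for absolute and uniform convergence I would apply the Weierstrass $M$-test to the decomposition of the series over the four sublattices $\{(0,0)\}$, $\{0\}\times(\Z\setminus\{0\})$, $(\Z\setminus\{0\})\times\{0\}$ and $(\Z\setminus\{0\})^2$. Inserting the bounds just obtained, $\sum_{(n_1,n_2)\in\Z^2}|a^\omega_{(n_1,n_2)}|$ is dominated by
\[
\|h(\omega,\cdot,\cdot)\|_{C^4}\left(1+\frac{2}{(2\pi)^4}\sum_{n\neq 0}\frac{1}{n^4}+\frac{1}{(2\pi)^4}\Big(\sum_{n\neq 0}\frac{1}{n^2}\Big)^{2}\right)<+\infty ,
\]
since $\sum_{n\neq 0}n^{-2}$ and $\sum_{n\neq 0}n^{-4}$ converge; hence the series converges absolutely, and uniformly in $(x_1,x_2)$ because the $(n_1,n_2)$-term has modulus $|a^\omega_{(n_1,n_2)}|$ independent of $(x_1,x_2)$. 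I do not foresee a genuine obstacle here: the only points requiring care are the vanishing of the boundary terms (which needs the periodicity together with the $C^4$ regularity) and the observation that the mixed case $(m_1,m_2)=(2,2)$ still involves only fourth-order partial derivatives, so it remains controlled by $\|h(\omega,\cdot,\cdot)\|_{C^4}$.
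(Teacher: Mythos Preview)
Your proof is correct and follows the standard integration-by-parts argument. The paper does not actually prove this lemma; it simply cites \cite[Theorem 3.3.9]{Grafakos2014} as the source of these ``basic facts,'' so your sketch is precisely the argument being referenced.
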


\subsubsection{Bad sets}
For $\phi\in C^0(\T;\R)$, we define its bad set to be
$$B(\varepsilon ,\phi):=\left\{{x}\in \T^N: \left|\frac{1}{L} \sum_{j=2}^N \phi({x}_j)- \int_{\T} \phi\mathrm{d}m \right|>\varepsilon\right\}.$$
In the Introduction example, the bad set $B(\varepsilon,\phi)$ with $\phi(x)=\sin2\pi x$ is precisely the part of state space that produces large fluctuation $|\xi^t|>\varepsilon$. As we will see shortly, the bad sets play a similar role in the general case and thus good control on the frequency of visits to them is key to obtaining $\varepsilon$-reduction. 
We first estimate the size of the bad set and frequency of visit to it by trajectories of the product system.


\begin{prop}[Size of Bad Set]\label{prop:sizebadset}
	The bad set $B(\varepsilon,\phi)$ has exponentially small $m^{\otimes N}$-volume:
	$$m^{\otimes N} (B(\varepsilon,\phi))\leq 2\exp(-L2^{-1}\|\phi\|_{\infty}^{-2} \varepsilon^2).$$
\end{prop}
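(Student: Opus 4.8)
The plan is to recognize the statement as a direct instance of Hoeffding's concentration inequality. First I would observe that, under the product measure $m^{\otimes N}$, the coordinates $x_2,\dots,x_N$ are i.i.d.\ with common law $m$, and that the set $B(\varepsilon,\phi)$ depends only on these $L=N-1$ coordinates; the first coordinate $x_1$ is free. Hence $m^{\otimes N}(B(\varepsilon,\phi))$ equals the $m^{\otimes L}$-measure of $\{(x_2,\dots,x_N)\in\T^L : |\tfrac1L\sum_{j=2}^N \phi(x_j)-\int_\T\phi\,\mathrm dm|>\varepsilon\}$, and we may work on $\T^L$ throughout.

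Next, set $Y_j:=\phi(x_j)$ for $j=2,\dots,N$, so that the $Y_j$ are i.i.d.\ with mean $\mu:=\int_\T\phi\,\mathrm dm$. Since $|\phi|\le\|\phi\|_\infty$ pointwise, each $Y_j$ takes values in the interval $[-\|\phi\|_\infty,\|\phi\|_\infty]$, which has length $2\|\phi\|_\infty$. Hoeffding's inequality for bounded i.i.d.\ summands then yields $m^{\otimes L}\{|\tfrac1L\sum_{j=2}^N Y_j-\mu|>\varepsilon\}\le 2\exp\!\big(-2L\varepsilon^2/(2\|\phi\|_\infty)^2\big)=2\exp\!\big(-L\,2^{-1}\|\phi\|_\infty^{-2}\varepsilon^2\big)$, which is precisely the claimed bound.

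If one prefers a self-contained argument rather than citing Hoeffding's inequality as a black box, the same estimate comes from the standard Chernoff/exponential-moment method: by Hoeffding's lemma, the centered variable $Y_j-\mu$ (of range at most $2\|\phi\|_\infty$) satisfies $\E\,e^{\lambda(Y_j-\mu)}\le e^{\lambda^2\|\phi\|_\infty^2/2}$ for all $\lambda\in\R$; multiplying over the $L$ independent factors, applying Markov's inequality to $e^{\lambda\sum_{j}(Y_j-\mu)}$, and optimizing the resulting bound over $\lambda>0$ gives the one-sided tail $m^{\otimes L}\{\tfrac1L\sum (Y_j-\mu)>\varepsilon\}\le \exp(-L\varepsilon^2/(2\|\phi\|_\infty^2))$; the opposite tail is handled identically with $-\lambda$, and summing the two one-sided bounds produces the factor $2$. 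I do not expect any genuine obstacle here; the only points requiring care are that the number of summands is $L=N-1$ rather than $N$, and that the correct range constant is $2\|\phi\|_\infty$ (not $\|\phi\|_\infty$), which is what produces the factor $2^{-1}$ in the exponent.
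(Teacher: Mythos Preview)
Your proposal is correct and follows essentially the same approach as the paper: define the i.i.d.\ bounded random variables $\phi(x_j)$, $j=2,\dots,N$, on $(\T^N,m^{\otimes N})$ and apply Hoeffding's inequality directly. The paper cites Hoeffding as a black box, while you additionally spell out the Chernoff/Hoeffding-lemma derivation, but the argument is the same.
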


\begin{proof}
	For $i=2,\cdots, N$, define random variable $X_i$ on $(\T^N, m^{\otimes N})$ by 
	$$ X_i:\T^N\to\T,~~~~(x_1,\cdots,x_N)\mapsto \phi(x_i).$$
	Then, these random variables $X_2\cdots,X_N$ are independent and bounded.  By Hoeffding's Inequality \cite[Theorem 2.2.2]{Vershynin2018}, for any $\varepsilon>0$, we have
	$$m^{\otimes N} (B(\varepsilon,\phi))\leq 
	2\exp(-L2^{-1}\|\phi\|_{\infty}^{-2} \varepsilon^2),$$
	as desired.
\end{proof}

\begin{rem}
	Note that the geometric structure of the bad set $B(\varepsilon,\phi)$ may be complicated, depending on the choice of $\phi$. However, its $m^{\otimes L}$-volume has an exponentially small upper bound, regardless of $\phi$. 
\end{rem}

\begin{lemma}[Random Orbit Visits Bad Set with Small Frequency]\label{lemma:RDSvisitfreq} Under the hypotheses of Reduction Theorem \ref{thm:star}, the uncoupled orbit of the shadowing pair $(\omega_{{s}},x_{{s}})\in \Omega\times \T^{N}$ visits the bad set with exponentially small frequency:
	$$\limsup_{T\to+\infty} \frac{1}{T}\sum_{t=0}^{T-1} \One_{B(\varepsilon,\phi)}  (\varphi_2(t,\omega_{{s}},x_{{s},2}),\cdots,\varphi_N(t,\omega_{{s}},x_{{s},N}) )\leq 2\exp(-L2^{-1}\|\phi\|_{\infty}^{-2} \varepsilon^2).$$
\end{lemma}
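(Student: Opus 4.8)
The statement is really a consequence of (Typicality) alone (the shadowing pair $(\omega_s,x_s)$ enters only through the fact, guaranteed by the hypotheses of Theorem~\ref{thm:star}, that such a pair exists and its uncoupled low-degree orbit equidistributes to $m^{\otimes(N-1)}$). First I would pass to the quotient picture: since the bad set $B(\varepsilon,\phi)\subseteq\T^N$ depends only on the coordinates $x_2,\dots,x_N$, introduce the continuous function $\psi:\T^{N-1}\to\R$, $\psi(y_2,\dots,y_N):=\frac1L\sum_{j=2}^N\phi(y_j)-\int_\T\phi\,\mathrm dm$, so that $\One_{B(\varepsilon,\phi)}(x_1,\dots,x_N)=\One_{\tilde B}(x_2,\dots,x_N)$ with $\tilde B:=\{y\in\T^{N-1}:|\psi(y)|>\varepsilon\}$. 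Writing $\mu_T:=\frac1T\sum_{t=0}^{T-1}\delta_{\varphi_2(t,\omega_s,x_{s,2})}\otimes\cdots\otimes\delta_{\varphi_N(t,\omega_s,x_{s,N})}$ for the empirical measures on $\T^{N-1}$, the quantity to be bounded is exactly $\limsup_{T\to\infty}\mu_T(\tilde B)$, and (Typicality) says $\mu_T\xrightarrow{\mathrm{weak}^*}m^{\otimes(N-1)}$.

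Next comes the measure-theoretic core. The set $\tilde B=\psi^{-1}(\{|\cdot|>\varepsilon\})$ is open, so one cannot directly apply the Portmanteau upper bound to it; moreover $m$ may be singular, so the level set $\{|\psi|=\varepsilon\}$ need not be $m^{\otimes(N-1)}$-null and $\tilde B$ need not be a continuity set. I would therefore replace $\tilde B$ by the closed set $\tilde C:=\{y\in\T^{N-1}:|\psi(y)|\geq\varepsilon\}\supseteq\overline{\tilde B}$, and use the Portmanteau theorem in the form: $\limsup_{T\to\infty}\mu_T(F)\leq m^{\otimes(N-1)}(F)$ for every closed $F$. Applying this to $F=\tilde C$ and using $\mu_T(\tilde B)\leq\mu_T(\tilde C)$ gives
\[
\limsup_{T\to\infty}\mu_T(\tilde B)\ \leq\ m^{\otimes(N-1)}(\tilde C).
\]

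Finally I would estimate $m^{\otimes(N-1)}(\tilde C)$ by repeating verbatim the argument of Proposition~\ref{prop:sizebadset}: the functions $y\mapsto\phi(y_j)$, $j=2,\dots,N$, are independent bounded random variables on $(\T^{N-1},m^{\otimes(N-1)})$, and Hoeffding's inequality (which is two-sided and applies equally to the event $\{|\cdot|\geq\varepsilon\}$) yields $m^{\otimes(N-1)}(\tilde C)\leq 2\exp(-L2^{-1}\|\phi\|_\infty^{-2}\varepsilon^2)$. Combining with the previous display and translating back through $\One_{B(\varepsilon,\phi)}=\One_{\tilde B}\circ(\text{projection})$ gives the claimed bound. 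The only delicate point — and the step I would flag as the main obstacle — is precisely the one handled in the second paragraph: weak$^*$ convergence gives control of $\mu_T$ on closed (not open) sets, so the inequality must be routed through the closure $\overline{\tilde B}\subseteq\tilde C$, which is harmless because Hoeffding bounds the slightly larger closed set just as well; everything else is bookkeeping.
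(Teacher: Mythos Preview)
Your proposal is correct. Both you and the paper recognize that the open set $B(\varepsilon,\phi)$ need not be a continuity set of $m^{\otimes(N-1)}$, so the Portmanteau theorem cannot be applied naively; the difference lies in how this obstacle is handled.

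You route the inequality through the closed superset $\tilde C=\{|\psi|\ge\varepsilon\}$ and invoke the Portmanteau upper bound $\limsup_T\mu_T(F)\le m^{\otimes(N-1)}(F)$ for closed $F$, then note that Hoeffding's inequality bounds $m^{\otimes(N-1)}(\tilde C)$ just as well as it bounds $m^{\otimes(N-1)}(\tilde B)$. The paper instead argues that the level sets $\partial B_{\varepsilon'}=\{|\psi|=\varepsilon'\}$ are pairwise disjoint in $\varepsilon'$, so only countably many of them can carry positive $m^{\otimes(N-1)}$-mass; it then picks $\varepsilon'\in(0,\varepsilon)$ arbitrarily close to $\varepsilon$ with $m^{\otimes(N-1)}(\partial B_{\varepsilon'})=0$, applies Portmanteau as an \emph{equality} on the continuity set $B_{\varepsilon'}\supseteq B_\varepsilon$, and lets $\varepsilon'\uparrow\varepsilon$ in the Hoeffding bound. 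Your approach is shorter and avoids the auxiliary level-set argument; the paper's approach has the minor bonus of showing that the $\limsup$ is in fact a limit equal to $m^{\otimes(N-1)}(B_{\varepsilon'})$ at continuity levels, but that extra information is not used anywhere.
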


\begin{proof}
	We emphasize only the dependence on $\varepsilon$ and suppress other dependencies by writing $B_{\varepsilon}$ for $B(\varepsilon,\phi)$. First we treat the case where the bad set has null boundary:
	$$m^{\otimes L}(\partial B_{\varepsilon})=0.$$
	From the weak$^*$ convergence in (Typicality) assumption in Theorem \ref{thm:star}, we conclude the frequency of visit by Portmanteau Theorem and Proposition \ref{prop:sizebadset}
	$$\lim_{T\to+\infty} \frac{1}{T}\sum_{t=0}^{T-1} \One_{B_{\varepsilon}}  (\varphi_2(t,\omega_{s},x_{{s},2}),\cdots,\varphi_N(t,\omega_{s},x_{{s},N}) ) = m^{\otimes L}(B_{\varepsilon})\leq 2\exp(-L2^{-1}\|\phi\|_{\infty}^{-2} \varepsilon^2).$$
	
	Note that the bad set is the superlevel set of the continuous function $\T^L\to\R$, $x\mapsto \left|\frac{1}{L}\sum_{j=2}^N \phi(x_j) - \int_{\T}\phi\mathrm{d}m\right|$, and hence $B_{\varepsilon}= \bigcup_{\varepsilon'\in[0,\varepsilon)} \partial B_{\varepsilon'}$ is a disjoint union. Since $m^{\otimes L}(B_{\varepsilon})<\infty$, it follows that $m^{\otimes L}(\partial B_{\varepsilon'})>0$ for at most countably many $\varepsilon'\in[0,\varepsilon)$. Hence, there are $\varepsilon'\in(0,\varepsilon)$ arbitrarily close to $\varepsilon$ with $m^{\otimes L}(\partial B_{\varepsilon'})=0$.
	
	Fix any such $\varepsilon'\in(0, \varepsilon)$. We then have $B_{\varepsilon}\subseteq B_{\varepsilon'}$ and $\One_{B_{\varepsilon}}\leq \One_{B_{\varepsilon'}}$. Therefore, from the special case of null boundary, we obtain
	\begin{align*}
		&\limsup_{T\to+\infty} \frac{1}{T}\sum_{t=0}^{T-1} \One_{B_{\varepsilon}}  (\varphi_2(t,\omega_{{s}},x_{{s},2}),\cdots,\varphi_N(t,\omega_{{s}},x_{{s},N}) )  \\
		\leq& \limsup_{T\to+\infty} \frac{1}{T}\sum_{t=0}^{T-1} \One_{B_{\varepsilon'}}(\varphi_2(t,\omega_{{s}},x_{{s},2}),\cdots,\varphi_N(t,\omega_{{s}},x_{{s},N}) )= m^{\otimes L}(B_{\varepsilon'})\\
		\leq& 2\exp(-L2^{-1}\|\phi\|_{\infty}^{-2} (\varepsilon')^2).
	\end{align*}
	But since $\varepsilon'\in(0,\varepsilon)$ can be chosen arbitrarily close to $\varepsilon$, the estimates follow.
\end{proof}

When dealing with multiple bad sets $B_1,\cdots,B_D$, each with asymptotic frequency at most $\rho_k$ of the trajectory visiting $B_k$, $k=1,\cdots,D$, we can lower bound the asymptotic frequency of the trajectory visiting none of the bad sets by $ 1- \sum_{k=1}^D \rho_k$.

\subsubsection{Proof of reduction theorem on a star}
We will split the fluctuation estimates
\begin{align*}
	\xi_{m}(\theta^t\omega,x^t) =&  \alpha\frac{1}{L} \sum_{j=2}^N h(\theta^t\omega,z^t,x_j^t) - \alpha\int_{\T} h(\theta^t\omega,z^t,y)\mathrm{d}m(y)
\end{align*}
into three parts. By (Shadowing), the first term 
\begin{align*} 
	\zeta_d(t,\omega,x,\omega_{{s}}, x_{{s}}) :=& \alpha\frac{1}{L} \sum_{j=2}^N \left[h(\theta^t\omega, z^t,x_j^t) - h(\theta^t\omega,z^t,x_{{s},j}^t) \right], \tag*{(decoupling)}
\end{align*}
decouples the low degree node orbit $x_j^t=\Phi_{\alpha}(t,\omega,x)_j$ by comparing it with the isolated shadowing orbit $x_{{s},j}^t = \varphi_j(t,\omega_{{s}}, x_{{s},j})$,
which will be controlled by the shadowing precision and Lipschitz continuity of $h(\theta^t\omega,\cdot,\cdot)$, see Lemma \ref{lemma:zeta1}.
We now have
$$\xi_{m}(\theta^t\omega,x^t)= \zeta_d(t,\omega,x,\omega_{{s}},x_{{s}})+ \alpha\left[  \frac{1}{L} \sum_{j=2}^N  h(\theta^t\omega,z^t,x_{{s},j}^t) - \int_{\T} h(\theta^t\omega,z^t,y)\mathrm{d}m(y)\right].$$
By (Typicality), the shadowing orbit $\left(x_{{s},2}^t,\cdots,x_{{s},N}^t\right)$ distributes in time as $m^{\otimes L}$ and thus visits any bad set with controlled asymptotic frequency. In order to find the appropriate bad sets, we take Fourier series expansion 
\[
h(\theta^t\omega,x_1,x_2) = \sum_{(n_1,n_2)\in\Z^2} a^{\theta^t\omega}_{(n_1,n_2)} \phi_{n_1}(x_1)\phi_{n_2}(x_2),
\]
where $\phi_n(x):=e^{2\pi i nx}$ and note that the Fourier modes corresponding to $n_2=0$ drop out 
$$a_{(n_1,0)}^{\theta^t\omega} \phi_{n_1}(z^t)\left[\phi_0(x_{{s},j}) - \int_{\T} \phi_0(y)\mathrm{d}m(y)\right]=0.$$ 
The high Fourier modes $\phi_{n_2}$ with $|n_2|>D$ for a suitable cutoff level $D$, make up the second term
\begin{align*}
	\zeta_h (t,\omega,x,\omega_{{s}}, x_{{s}}) :=&  \alpha \sum_{|n_2|> D}  \left[a_{(0,n_2)}^{\theta^t\omega}   +  \sum_{n_1\neq0} a_{(n_1,n_2)}^{\theta^t\omega} \phi_{n_1}(z^t)  \right] \left[\frac{1}{L} \sum_{j=2}^N\phi_{n_2}(x_{{s},j}^t) - \int_{\T} \phi_{n_2}(y) \mathrm{d}m(y)\right],\tag*{(HFM)}
\end{align*}
which will be controlled by decay of Fourier coefficients, see Lemma \ref{lemma:zeta2}..
The low Fourier modes $\phi_{n_2}$ with $1\leq |n_2|\leq D$ make up the third term
\begin{align*}
	\zeta_{\ell} (t,\omega,x,\omega_{{s}}, x_{{s}}) := & \alpha \sum_{1\leq |n_2|\leq D}  \left[a_{(0,n_2)}^{\theta^t\omega}   +  \sum_{n_1\neq0} a_{(n_1,n_2)}^{\theta^t\omega} \phi_{n_1}(z^t)  \right]\ \left[\frac{1}{L} \sum_{j=2}^N\phi_{n_2}(x_{{s},j}^t) - \int_{\T} \phi_{n_2}(y) \mathrm{d}m(y)\right],\tag*{(LFM)}
\end{align*}
which will be controlled by rare visits of the shadowing orbit $\left(x_{{s},2}^t,\cdots,x_{{s},N}^t\right)$  to the bad sets $B(\varepsilon_b,\phi_{n_2}), 1\leq |n_2|\leq D$, see Lemma \ref{lemma:zeta3}. 
In the next three lemmas, we estimate each of $\zeta_d,\zeta_h,\zeta_{\ell}$.

\begin{lemma}[Shadowing estimates]\label{lemma:zeta1}
	Under the hypotheses of Theorem \ref{thm:star}, we control the $\mathrm{(decoupling)}$ term 
	\[
	\left|\zeta_d(t,\omega,x,\omega_{{s}}, x_{{s}}) \right|\leq \varepsilon/4,~~~~t\in\N.
	\]
\end{lemma}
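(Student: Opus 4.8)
The plan is to bound the (decoupling) term directly using the Lipschitz continuity of the coupling map in its second argument together with the (Shadowing) hypothesis. First I would write
\[
\left|\zeta_d(t,\omega,x,\omega_{{s}}, x_{{s}})\right|
= \left| \alpha\frac{1}{L} \sum_{j=2}^N \left[h(\theta^t\omega, z^t,x_j^t) - h(\theta^t\omega,z^t,x_{{s},j}^t) \right] \right|
\leq \frac{\alpha}{L} \sum_{j=2}^N \left|h(\theta^t\omega, z^t,x_j^t) - h(\theta^t\omega,z^t,x_{{s},j}^t) \right|.
\]
Since $h(\theta^t\omega,\cdot,\cdot)$ is $C^4$, in particular Lipschitz, each summand is at most $|h(\theta^t\omega,\cdot,\cdot)|_{\mathrm{Lip}}\, d_{\T}(x_j^t, x_{{s},j}^t)$, where $x_{{s},j}^t = \varphi_j(t,\omega_{{s}},x_{{s},j})$. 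Then I invoke (Shadowing), which bounds $d_{\T}(x_j^t, x_{{s},j}^t)$ uniformly in $t$ and $j$ by $\varepsilon_s\!\left(\alpha (N-1)^{-1} \sup_{t\in\N}\|h(\theta^t\omega,\cdot,\cdot)\|_{C^0}\right)$. Averaging over the $L = N-1$ nodes, this gives
\[
\left|\zeta_d(t,\omega,x,\omega_{{s}}, x_{{s}})\right|
\leq \alpha \sup_{t\in\N}|h(\theta^t\omega,\cdot,\cdot)|_{\mathrm{Lip}} \cdot \varepsilon_s\!\left(\alpha (N-1)^{-1} \sup_{t\in\N}\|h(\theta^t\omega,\cdot,\cdot)\|_{C^0}\right).
\]

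To conclude, I compare this with the hypothesis on $\varepsilon$ in Theorem \ref{thm:star}: by assumption $\varepsilon \geq 4\alpha \sup_{t\in\N}|h(\theta^t\omega,\cdot,\cdot)|_{\mathrm{Lip}}\, \varepsilon_s\!\left(\alpha (N-1)^{-1} \sup_{t\in\N}\|h(\theta^t\omega,\cdot,\cdot)\|_{C^0}\right)$, so the right-hand side above is at most $\varepsilon/4$, which is exactly the claimed bound, uniformly in $t\in\N$.

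I expect no serious obstacle here; this is essentially a bookkeeping lemma. The only mild subtlety is making sure the Lipschitz seminorm used is consistent with the distance $d_{\T}$ on the circle (so that the estimate $|h(x_1,x_2)-h(x_1,x_2')| \leq |h|_{\mathrm{Lip}}\, d_{\T}(x_2,x_2')$ is the correct form), and that the shadowing precision argument $\delta_s = \alpha(N-1)^{-1}\sup_t\|h\|_{C^0}$ matches the pseudo-orbit error bound noted after \eqref{eq:Ghub_dynamics}. Both are immediate from the setup, and the factor $4$ slack in the hypothesis on $\varepsilon$ absorbs any constant discrepancies.
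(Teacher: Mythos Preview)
Your proposal is correct and follows essentially the same approach as the paper: bound each summand by the Lipschitz seminorm times the shadowing distance, average, and then invoke the lower bound on $\varepsilon$ from the hypotheses of Theorem~\ref{thm:star}. There is nothing to add.
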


\textit{Proof of Lemma \ref{lemma:zeta1}.} By Lipschitz continuity of $h(\omega,\cdot,\cdot)\in C^4(\T^2,\R)$, we have
\begin{align*}
	\left|\zeta_d (t,\omega,x,\omega_{{s}}, x_{{s}})  \right|\leq& \alpha \frac{1}{L} L |h(\theta^t\omega,z^t,\cdot)|_{\mathrm{Lip}} d_{\T}(x_j^t, x_{{s},j}^t) \leq \alpha  \sup_{t\in\N } |h(\theta^t\omega,\cdot,\cdot)|_{\mathrm{Lip}} \varepsilon_s (\alpha L^{-1} \sup_{t\in\N } \|h(\theta^t\omega,\cdot,\cdot)\|_{C^0}).
\end{align*}
The estimate follows by the assumption
$\varepsilon\geq  4\alpha \sup_{t\in\N }|h(\theta^t\omega,\cdot,\cdot)|_{\mathrm{Lip}} \varepsilon_s(\alpha L^{-1} \sup_{t\in\N } \|h(\theta^t\omega,\cdot,\cdot)\|_{C^0}).$ \qed

\begin{lemma}[High Fourier modes controlled by decay of Fourier coefficients]\label{lemma:zeta2}
	Under the hypotheses of Theorem \ref{thm:star}, by choosing $D(\varepsilon,\omega) \asymp \varepsilon^{-1}\sup_{t\in\N }\|h(\theta^t\omega,\cdot,\cdot)\|_{C^4}$, we estimate $\mathrm{(HFM)}$ 
	\[
	\left|\zeta_h (t,\omega,x,\omega_{{s}}, x_{{s}})\right| \leq \varepsilon/2,~~~~\forall t\in\N.
	\]
\end{lemma}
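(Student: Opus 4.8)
The plan is a direct estimate, and the key point is that, unlike the $\mathrm{(LFM)}$ piece, the high-mode piece $\zeta_h$ requires neither the (Shadowing) nor the (Typicality) hypothesis: it is controlled for \emph{every} $t\in\N$ and \emph{every} choice of the points $z^t$ and $x_{s,j}^t$, using only $|\phi_{n}(\cdot)|\le 1$ and the Fourier decay of Lemma \ref{cor:decayFouriercoeff}. So the scheme is: (1) bound the coefficient factor $|a^{\theta^t\omega}_{(0,n_2)}+\sum_{n_1\neq 0}a^{\theta^t\omega}_{(n_1,n_2)}\phi_{n_1}(z^t)|$ for each fixed $n_2$ via Lemma \ref{cor:decayFouriercoeff}; (2) bound the deviation factor crudely; (3) sum the tail over $|n_2|>D$; (4) choose the cutoff $D$ accordingly.

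For step (1), since $|\phi_{n_1}(z^t)|=1$ and $\sum_{n_1\neq 0}|a^{\theta^t\omega}_{(n_1,n_2)}|<\infty$ by Lemma \ref{cor:decayFouriercoeff}, the triangle inequality together with that lemma give, for $n_2\neq 0$,
\[
\Bigl|a^{\theta^t\omega}_{(0,n_2)}+\sum_{n_1\neq 0}a^{\theta^t\omega}_{(n_1,n_2)}\phi_{n_1}(z^t)\Bigr|\le |a^{\theta^t\omega}_{(0,n_2)}|+\sum_{n_1\neq 0}|a^{\theta^t\omega}_{(n_1,n_2)}|\le \frac{\|h(\theta^t\omega,\cdot,\cdot)\|_{C^4}}{(2\pi)^4}\Bigl(\frac{1}{n_2^4}+\frac{1}{n_2^2}\sum_{n_1\neq 0}\frac{1}{n_1^2}\Bigr)\le \frac{(1+\pi^2/3)\,\|h(\theta^t\omega,\cdot,\cdot)\|_{C^4}}{(2\pi)^4\,n_2^2},
\]
using $\sum_{n_1\neq 0}n_1^{-2}=\pi^2/3$ and $n_2^{-4}\le n_2^{-2}$. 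For steps (2)--(3), bound $\bigl|\tfrac1L\sum_{j=2}^N\phi_{n_2}(x_{s,j}^t)-\int_{\T}\phi_{n_2}\,\mathrm{d}m\bigr|\le 2$ and sum over $|n_2|>D$, using $\sum_{|n_2|>D}n_2^{-2}\le 2/D$ for $D\in\N$, to obtain, uniformly in $t\in\N$,
\[
\bigl|\zeta_h(t,\omega,x,\omega_s,x_s)\bigr|\le \alpha\cdot\frac{2(1+\pi^2/3)}{(2\pi)^4}\Bigl(\sup_{t\in\N}\|h(\theta^t\omega,\cdot,\cdot)\|_{C^4}\Bigr)\cdot\frac{2}{D}.
\]
For step (4), taking $D=D(\varepsilon,\omega)$ to be the least positive integer at least $\tfrac{8(1+\pi^2/3)}{(2\pi)^4}\,\alpha\,\varepsilon^{-1}\sup_{t\in\N}\|h(\theta^t\omega,\cdot,\cdot)\|_{C^4}$ — which has the order $\varepsilon^{-1}\sup_{t\in\N}\|h(\theta^t\omega,\cdot,\cdot)\|_{C^4}$ asserted in the statement (the coupling strength $\alpha$ entering the proportionality), and is independent of $N$ and $x$ — makes the right-hand side $\le\varepsilon/2$ for all $t$, as required.

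I expect no genuine obstacle here: the lemma is essentially the Weierstrass $M$-test combined with Lemma \ref{cor:decayFouriercoeff}. The only points deserving a line of care are (a) the absolute convergence of the $n_1$- and $n_2$-series, which is exactly what makes the termwise bounding and tail truncation above legitimate, and (b) keeping the estimate uniform in $t$ by replacing each $\|h(\theta^t\omega,\cdot,\cdot)\|_{C^4}$ with $\sup_{t\in\N}\|h(\theta^t\omega,\cdot,\cdot)\|_{C^4}$ throughout — this supremum being finite $\PP$-a.s.\ in the settings considered (e.g.\ finitely many coupling maps), so that $D(\varepsilon,\omega)$ is well defined.
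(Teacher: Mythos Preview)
Your proof is correct and follows essentially the same approach as the paper's: bound the coefficient factor via Lemma~\ref{cor:decayFouriercoeff}, bound the deviation factor trivially by $2$, and sum the tail $|n_2|>D$. The only cosmetic difference is that the paper keeps the $n_2^{-4}$ and $n_2^{-2}$ contributions separate and introduces two cutoffs $D_1(\varepsilon,\omega)\asymp\varepsilon^{-1/3}$ and $D_2(\varepsilon,\omega)\asymp\varepsilon^{-1}$ (then takes the max), whereas you immediately absorb $n_2^{-4}\le n_2^{-2}$ into a single $n_2^{-2}$ tail---a harmless streamlining that leads to the same order for $D$.
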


\textit{Proof of Lemma \ref{lemma:zeta2}.} By decay of Fourier coefficients Lemma \ref{cor:decayFouriercoeff}, we estimate (HFM)
\begin{align*}
	\left|\zeta_h(t,\omega,x,\omega_{{s}}, x_{{s}})\right|  \leq& \alpha \sum_{|n_2|>D} \frac{\|h(\theta^t\omega,\cdot,\cdot)\|_{C^4}}{ (2\pi)^4 n_2^4}  \frac{1}{L} L2\|\phi_{n_2}\|_{\infty} + \alpha \sum_{n_1\neq0}\sum_{|n_2|>D} \frac{\|h(\theta^t\omega,\cdot,\cdot)\|_{C^4}}{(2\pi)^4 n_1^2 n_2^2} \frac{1}{L} L 2 \|\phi_{n_2}\|_{\infty} \\
	\leq& \frac{2\alpha \sup_{t\in\N }\|h(\theta^t\omega,\cdot,\cdot)\|_{C^4}}{ (2\pi)^4}\sum_{|n_2|>D} 1/n_2^4 + \frac{2\alpha \sup_{t\in\N } \|h(\theta^t\omega,\cdot,\cdot)\|_{C^4}}{(2\pi)^4} \sum_{n_1\neq 0}\frac{1}{n_1^2} \sum_{|n_2|>D}\frac{1}{n_2^2} \\
	=& \frac{2\alpha \sup_{t\in\N }\|h(\theta^t\omega,\cdot,\cdot)\|_{C^4}}{ (2\pi)^4}\sum_{|n_2|>D} 1/n_2^4 + \frac{\alpha \sup_{t\in\N } \|h(\theta^t\omega,\cdot,\cdot)\|_{C^4}}{24\pi^2}  \sum_{|n_2|>D}\frac{1}{n_2^2},
\end{align*}
where we have used the identity $\sum_{n=1}^{\infty}1/n^2=\pi^2/6$ in the last equality. By choosing $D\geq \max\{D_1(\varepsilon,\omega),D_2(\varepsilon,\omega)\}$, where $D_1(\varepsilon,\omega)$ is so large that 
$$\frac{2\alpha \sup_{t\in\N }\|h(\theta^t\omega,\cdot,\cdot)\|_{C^4}} { (2\pi)^4} \sum_{|n|>D_1(\varepsilon,\omega)}1/n^4\leq \varepsilon/4,$$
and 
$D_2(\varepsilon,\omega)$ is so large that $$\frac{\alpha \sup_{t\in\N }\|h(\theta^t\omega,\cdot,\cdot)\|_{C^4}} { 24\pi^2}\left(\sum_{|n|>D_2(\varepsilon,\omega)}1/n^2\right)\leq \varepsilon/4,$$ 
we obtain that $\left|\zeta_h(t,\omega,x,\omega_{{s}}, x_{{s}})\right| \leq \varepsilon/2.$ 
Since $\sum_{|n|>D} 1/n^4 \asymp D^{-3}$, we take $D_1(\varepsilon,\omega)\asymp \varepsilon^{-1/3}\sup_{t\in\N }\|h(\theta^t\omega,\cdot,\cdot)\|_{C^4}$; since $\sum_{|n|>D} 1/n^2 \asymp D^{-1}$, we take $D_2(\varepsilon,\omega)\asymp \varepsilon^{-1}\sup_{t\in\N }\|h(\theta^t\omega,\cdot,\cdot)\|_{C^4}$. \qed

\begin{lemma}[Low Fourier modes controlled by rare visits to bad sets]\label{lemma:zeta3}
	Under the hypotheses of Theorem \ref{thm:star}, by choosing $D(\varepsilon,\omega) \asymp \varepsilon^{-1} \sup_{t\in\N }\|h(\theta^t\omega,\cdot,\cdot)\|_{C^4}$, we control $\mathrm{(LFM)}$ 
	\[
	\left|\zeta_{\ell} (t,\omega,x,\omega_{{s}}, x_{{s}}) \right|\leq \varepsilon/4
	\]
	with exponentially small exceptional asymptotic frequency at most $\rho$ with 
	$$\rho(\varepsilon,\omega)= 4D (\varepsilon,\omega) \exp\left(-L \frac{450\varepsilon^2}{\alpha^2\sup_{t\in\N }\|h(\theta^t\omega,\cdot,\cdot)\|^2_{C^4}}\right).$$
\end{lemma}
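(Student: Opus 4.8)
The strategy is to write the (LFM) term as a finite sum over Fourier modes $n_2$ with $1\le|n_2|\le D$, and to bound each mode's contribution by forcing the shadowing orbit to avoid the corresponding bad set, invoking the frequency estimate of Lemma \ref{lemma:RDSvisitfreq}. First I would note that the coefficient in front of the bracket $\left[\frac{1}{L}\sum_{j=2}^N\phi_{n_2}(x_{s,j}^t)-\int_\T\phi_{n_2}\,dm\right]$, namely $a_{(0,n_2)}^{\theta^t\omega}+\sum_{n_1\ne0}a_{(n_1,n_2)}^{\theta^t\omega}\phi_{n_1}(z^t)$, is bounded uniformly in $t$ and $z^t$ by Lemma \ref{cor:decayFouriercoeff}: indeed $|a_{(0,n_2)}^{\theta^t\omega}|\le (2\pi)^{-4}n_2^{-4}\|h\|_{C^4}$ and $\sum_{n_1\ne0}|a_{(n_1,n_2)}^{\theta^t\omega}|\le (2\pi)^{-4}n_2^{-2}\|h\|_{C^4}\sum_{n_1\ne0}n_1^{-2}$, so the coefficient is at most $C n_2^{-2}\sup_{t}\|h(\theta^t\omega,\cdot,\cdot)\|_{C^4}$ for an absolute constant $C$; since $\phi_{n_2}$ is complex I would work with real and imaginary parts (i.e.\ $\cos$ and $\sin$ modes) to stay within the real-valued bad-set framework, at the cost of a factor $2$.

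Next, for each $n_2$ with $1\le|n_2|\le D$ (and each of the two trigonometric pieces), set a per-mode threshold $\varepsilon_b=\varepsilon_b(n_2)$ and define the bad set $B(\varepsilon_b,\phi_{n_2})$ as in the text with $\|\phi_{n_2}\|_\infty\le 1$. Apply Lemma \ref{lemma:RDSvisitfreq}: the shadowing orbit visits $B(\varepsilon_b(n_2),\phi_{n_2})$ with asymptotic frequency at most $2\exp(-L\,\varepsilon_b(n_2)^2/2)$. Choosing $\varepsilon_b(n_2)$ so that the summed contribution $\alpha\sum_{1\le|n_2|\le D}(C n_2^{-2}\sup_t\|h\|_{C^4})\,\varepsilon_b(n_2)$ is at most $\varepsilon/4$ when the orbit avoids \emph{all} these bad sets — e.g.\ take $\varepsilon_b(n_2)$ proportional to $\varepsilon/(\alpha\sup_t\|h\|_{C^4})$, using $\sum_{n_2}n_2^{-2}<\infty$ to absorb the mode sum — gives the pointwise bound $|\zeta_\ell|\le\varepsilon/4$ outside the union of bad sets. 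The union of at most $4D(\varepsilon,\omega)$ bad sets (two modes' worth of $\cos/\sin$ for each of $D$ values of $|n_2|$) is then excluded by subadditivity of asymptotic frequency, as recorded after Lemma \ref{lemma:RDSvisitfreq}, yielding exceptional frequency at most $\rho(\varepsilon,\omega)=4D(\varepsilon,\omega)\exp(-L\varepsilon_b^2/2)$. Substituting the explicit $\varepsilon_b\asymp \varepsilon/(\alpha\sup_t\|h\|_{C^4})$ and tracking the numerical constant produces the stated bound with the exponent $-L\cdot 450\varepsilon^2/(\alpha^2\sup_t\|h(\theta^t\omega,\cdot,\cdot)\|_{C^4}^2)$.

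The main obstacle is bookkeeping rather than conceptual: one must choose the per-mode thresholds $\varepsilon_b(n_2)$ carefully so that (a) their weighted sum against the $O(n_2^{-2})$ Fourier-coefficient bounds stays below $\varepsilon/4$, and (b) the resulting exponents $-L\varepsilon_b(n_2)^2/2$ are all uniformly $\le -L\cdot 450\varepsilon^2/(\alpha^2\sup_t\|h\|_{C^4}^2)$, so that a single clean exponential bound emerges after multiplying by the mode count $4D$. Since $D(\varepsilon,\omega)\asymp\varepsilon^{-1}\sup_t\|h\|_{C^4}$ grows only polynomially in $\varepsilon^{-1}$ while the exponential decays like $\exp(-cL\varepsilon^2)$, the prefactor $4D$ is harmless; the delicate point is simply to pin down the absolute constant ($450$) correctly, which amounts to the elementary optimization of distributing the budget $\varepsilon/4$ across modes and comparing $\varepsilon_b^2/2$ with the target rate. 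I would also double-check that $\phi_0$ modes genuinely drop out (they do, since $\int_\T\phi_0\,dm=1=\phi_0(x)$) so that only $n_2\ne0$ modes contribute, and that the $z^t$-dependence is fully absorbed into the uniform coefficient bound, leaving the randomness entirely in the shadowing orbit $x_{s,j}^t$ as required by Lemma \ref{lemma:RDSvisitfreq}.
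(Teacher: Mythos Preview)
Your proposal is correct and follows essentially the same route as the paper: bound the Fourier-coefficient prefactor uniformly via Lemma~\ref{cor:decayFouriercoeff}, impose a single threshold $\varepsilon_b\asymp \varepsilon/(\alpha\sup_t\|h\|_{C^4})$ on the bracket, invoke Lemma~\ref{lemma:RDSvisitfreq} for each of the finitely many low modes, and sum frequencies. The paper in fact skips the per-mode thresholds and takes $\varepsilon_b=30\varepsilon/(\alpha\sup_t\|h\|_{C^4})$ uniformly from the start, obtaining the constant $450=30^2/2$ directly; your explicit $\cos/\sin$ split to keep $\phi$ real-valued is a point the paper glosses over, but otherwise the arguments coincide.
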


\textit{Proof of Lemma \ref{lemma:zeta3}.}
We estimate $\zeta_{\ell} (t,\omega,x,\omega_{{s}}, x_{{s}})$ as a problem of frequency of visits to the bad sets $B_n:= B(\varepsilon_b, \phi_n)$, $n=\pm1,\cdots,\pm D$. For each such $n$, Lemma \ref{lemma:RDSvisitfreq} implies that the shadowing orbit $x_{{s}}^t = (\varphi_2(t,\omega_{{s}}, x_{{s},2}),\cdots,\varphi_N(t,\omega_{{s}}, x_{{s},N}))$ visits $B_n$ with exponentially small frequency at most
$ 2\exp(-L 2^{-1} \|\phi_n\|_{\infty}^{-2}\varepsilon_b^2)=2\exp(-L 2^{-1} \varepsilon_b^2)$. 
Whenever all such bad sets are avoided, we can bound (LFM)
\begin{align*}
	\left|\zeta_{\ell} (t,\omega,x,\omega_{{s}}, x_{{s}}) \right| \leq& \alpha \sum_{1\leq |n_2|\leq D} \left[|a_{(0,n_2)}^{\theta^t\omega}|   +  \sum_{n_1\neq0} |a_{(n_1,n_2)}^{\theta^t\omega}|  \right]\left|\frac{1}{L} \sum_{j=2}^N\phi_{n_2}(x_{{s},j}^t) - \int_{\T} \phi_{n_2}(y) \mathrm{d}m(y)\right|\\
	\leq& \alpha \sum_{1\leq |n_2|\leq D} \left[ \frac{\sup_{t\in\N }\|h(\theta^t\omega,\cdot,\cdot)\|_{C^4}}{(2\pi)^4 n_2^4} + \sum_{n_1\neq 0} \frac{\sup_{t\in\N }\|h(\theta^t\omega,\cdot,\cdot)\|_{C^4}}{(2\pi)^4 n_1^2n_2^2} \right] \varepsilon_b\\
	\leq& \frac{\alpha\sup_{t\in\N }\|h(\theta^t\omega,\cdot,\cdot)\|_{C^4}}{(2\pi)^4 } \left[\frac{2\pi^4}{90} + \frac{2\pi^2}{6}\frac{2\pi^2}{6} \right]\varepsilon_b = \frac{\alpha\sup_{t\in\N }\|h(\theta^t\omega,\cdot,\cdot)\|_{C^4}}{120 } \varepsilon_b.
\end{align*}
By choosing 
\begin{equation}\label{eq:epsilon_E}
	\varepsilon_b=\frac{30}{\alpha\sup_{t\in\N }\|h(\theta^t\omega,\cdot,\cdot)\|_{C^4}}\varepsilon,
\end{equation}
we obtain $\left|\zeta_{\ell} (t,\omega,x,\omega_{{s}}, x_{{s}}) \right|\leq \varepsilon/4$ with exponentially small exceptional asymptotic frequency at most $\rho$ with 
$$\rho(\varepsilon,\omega)= 2D(\varepsilon,\omega) 2\exp(-L 2^{-1} \varepsilon_b^2) =  4D(\varepsilon,\omega)  \exp\left(-L \frac{450\varepsilon^2}{\alpha^2\sup_{t\in\N }\|h(\theta^t\omega,\cdot,\cdot)\|^2_{C^4}}\right),$$
where $D(\varepsilon,\omega) = \max\{D_1(\varepsilon,\omega),D_2(\varepsilon,\omega)\}\asymp \varepsilon^{-1} \sup_{t\in\N }\|h(\theta^t\omega,\cdot,\cdot)\|_{C^4}$. \qed

\textit{Proof of Theorem \ref{thm:star}.} Fix, as in the hypotheses of Theorem \ref{thm:star},
$$\varepsilon\geq  \max\left\{ \alpha L^{-1/2},4\alpha \sup_{t\in\N }|h(\theta^t \omega,\cdot,\cdot)|_{\mathrm{Lip}} \varepsilon_s \left( \alpha L^{-1} \sup_{t\in\N } \|h(\theta^t\omega,\cdot,\cdot)\|_{C^0} \right)\right\},$$
so that the (decoupling) term $\left|\zeta_d(t,\omega,x,{{s}},x_{{s}})\right|\leq \varepsilon/4$ by Lemma \ref{lemma:zeta1}. Now fix $D(\varepsilon,\omega) = \max\{D_1(\varepsilon,\omega),D_2(\varepsilon,\omega)\}$, so that (HFM) term $\left|\zeta_h(t,\omega,x,\omega_{{s}},x_{{s}})\right| \leq\varepsilon/2$ by Lemma \ref{lemma:zeta2} and (LFM) $\left|\zeta_{\ell}(t,\omega,x,\omega_{{s}},x_{{s}})\right| \leq\varepsilon/4$ with exponentially small exceptional asymptotic frequency  at most $\rho$ with 
\begin{align*}
	\rho(\varepsilon,\omega)= 4D(\varepsilon,\omega) \exp\left(-L \frac{450\varepsilon^2}{\alpha^2\sup_{t\in\N }\|h(\theta^t\omega,\cdot,\cdot)\|^2_{C^4}}\right),
\end{align*} 
by Lemma \ref{lemma:zeta3}. It is also with this exponentially small exceptional asymptotic frequency $\rho$ that we conclude
\begin{align*}
	\left|\xi_{m}(\theta^t\omega,x^t)\right| \leq& \left|\zeta_d(t,\omega,x,\omega_{{s}},x_{{s}})\right| +\left|\zeta_h(t,\omega,x,\omega_{{s}},x_{{s}})\right| + \left|\zeta_{\ell}(t,\omega,x,\omega_{{s}},x_{{s}})\right|\\
	\leq& \varepsilon/4 + \varepsilon/2 + \varepsilon/4 = \varepsilon\tag*{\qed}
\end{align*}

\begin{rem}
	In our estimates we control the fluctuation by shadowing in $|\zeta_d|\leq \varepsilon/4$, decay of Fourier coefficients in $|\zeta_h|\leq \varepsilon/2$, and rare visits to the bad set in $|\zeta_{\ell}|\leq \varepsilon/4$. The choice to split $\varepsilon$ into these three portions has an impact on the outcome of constants such as $D(\varepsilon,\omega)$, $c(\omega)$ and $\rho$. The choice of $\varepsilon_b$ in (\ref{eq:epsilon_E}) impacts the exceptional asymptotic frequency $\rho$. Such choices are made only for convenience to illustrate the scaling relation among system size $L$, fluctuation size $\varepsilon$, and exceptional asymptotic frequency $\rho$. 
\end{rem}

\subsection{Reduction on a locally star-like network}\label{sec:reduction_loc_star_like}
As illustrated for the power-law network in the introduction, a hub looks locally like a star, in the sense that most of its neighbors are low degree nodes. In this section, we provide a definition to quantify this local feature.
\begin{defn}[locally star-like network]
	Let $G=(V,E)$ be an undirected graph on $N$ nodes $V=\{1,\cdots,N\}$ indexed so that the degree sequence is non-increasing $k_1\geq \cdots\geq k_N$. 
	Choose $\Delta\in\N$ as the hub scale and let $\Hil_{\Delta}:=\{i\in V: k_i\geq \Delta\}=\{1,\cdots,M\}$ denote the collection of $M$ hubs. Choose $\delta\in\N$ as the low degree scale and let $\Lil_{\delta}:=\{i\in V: k_i\leq\delta\}=\{N-L+1,\cdots,N\}$ denote the collection of $L$ low degree nodes. We say that $G$ is a \textit{$(\Delta,\delta,\nu)$-locally star-like network} if most hub neighbors are low degree nodes in the sense that
	$$\nu_i:= \frac{\# \Nil_i\cap \Lil_{\delta}}{\#\Nil_i} \geq \nu,~~~~\forall i\in \Hil_{\Delta},$$
	where $\Nil_i:= \{j\in V: A_{ij}=1\}$ denotes the neighbors of node $i$.
\end{defn}

\begin{rem}
	The star on $L$ low degree nodes and one hub is locally star-like with $(\Delta,\delta,\nu)=(L,1,1)$. In a heterogeneous network, we will typically have $\nu$ near 1 and that the low degree scale $\delta$ is dominated by the hub degree scale $\Delta$ in the sense that $\delta/\Delta\to0$ as $N\to+\infty$. There may be intermediate nodes that are neither hubs nor low degree nodes.
\end{rem}

Now let $G$ be a $(\Delta,\delta,\nu)$-locally star-like network on $N$ nodes and consider $G$-network dynamics (\ref{eq:Gnetwork_dynamics}).
The mean-field reduction seeks to approximate the mass action of the low-degree neighbors $\Nil_i\cap\Lil_{\delta}$ of a hub $z_i$, $i\in \Hil_{\Delta}$ as a space average against some probability measure $m$ on $\T$
$$\alpha \frac{\nu_ik_i}{\Delta_0} \frac{1}{\nu_ik_i}\sum_{j=1}^N A_{ij}h(\omega,z_i,x_j) = \alpha\frac{\nu_i k_i}{\Delta_0} \int_{\T} h(\omega,z_i,y)\mathrm{d}m(y)+\xi_{i,m}(\omega,x),$$
where the fluctuation $\xi_{i,m}(\omega,x)$ is given by
$$\alpha^{-1}\xi_{i,m}(\omega,x):= \frac{1}{\Delta_0} \sum_{j=1}^N A_{ij}h(\omega,z_i,x_j) -  \frac{\nu_i k_i}{\Delta_0} \int_{\T} h(\omega,z_i,y)\mathrm{d}m(y).$$




\begin{theorem}[Reduction theorem on a locally star-like network]\label{thm:loc_star}
	Suppose (R1--3) hold for the $G$-network random dynamical system (\ref{eq:Gnetwork_dynamics}) on a $(\Delta,\delta,\nu)$-locally star-like network $G$ on $N$ nodes. Let
	initial data $(\omega,x)\in\Omega\times \T^{N}$ be such that the network trajectory $\{x^t=\Phi_{\alpha}(t,\omega,x): t\in\N \}$ admits an $m$-typical shadowing orbit in the low degree coordinates; more precisely, there is $(\omega_{{s}},x_{{s}})\in\Omega\times\T^{N}$ satisfying
	\begin{equation}\tag{Shadowing}
	\sup_{t\in\N}\max_{j\in\Lil_{\delta}}	d_{\T}(x_j^t, \varphi_j(t,\omega_{{s}},x_{{s},j}))\leq \varepsilon_s \left(\alpha\Delta_0^{-1} \delta\sup_{t\in\N} \|h(\theta^t\omega,\cdot,\cdot)\|_{C^0}\right); 
	\end{equation}
	\begin{equation}\tag{Typicality}
		\frac{1}{T}\sum_{t=0}^{T-1} \bigotimes_{j\in\Lil_{\delta}} \delta_{\varphi_j(t,\omega_{{s}},x_{{s},j})}\xrightarrow[T\to+\infty]{\text{weak}^*} m^{\otimes L},
	\end{equation}
	where $d_{\T}$ denotes the distance on the circle, the shadowing precision $\delta_s\mapsto\varepsilon_s (\delta_s )$ is a $\R_+$-valued function converging to 0 as $\delta_s $ tends to 0. Then, for any error tolerance 
	\[
	\varepsilon\geq \max\left\{\alpha \Delta^{-1/2}, 4\alpha \sup_{t\in\N }|h(\theta^t\omega,\cdot,\cdot)|_{\mathrm{Lip}} \varepsilon_s \left(\alpha\Delta_0^{-1} \delta\sup_{t\in\N} \|h(\theta^t\omega,\cdot,\cdot)\|_{C^0}\right)\right\} ,
	\]
	each hub $i\in\Hil_{\Delta}$ admits $(\varepsilon + \alpha(1-\nu)\sup_{t\in\N} \|h(\theta^t\omega,\cdot,\cdot)\|_{C^0})$-reduction to $\varphi_{\alpha_i,m}$ in Eq. (\ref{eq:reducedhub}) with $\alpha_i=\alpha\frac{\nu_i\kappa_i}{\Delta_0}$ on initial data $(\omega,x)\in\Omega\times \T^N$ with exceptional asymptotic frequency at most $\rho$ with  
	$$\rho(\varepsilon,\omega)= 4MD(\varepsilon,\omega) \exp(-\nu\Delta\varepsilon^2 \alpha^{-2} c(\omega)),~D(\varepsilon,\omega)\asymp \varepsilon^{-1}\sup_{t\in\N}\|h(\theta^t\omega,\cdot,\cdot)\|_{C^4},~c(\omega) = \frac{450}{\sup_{t\in\N }\|h(\theta^t\omega,\cdot,\cdot)\|^2_{C^4}},$$
	where $M$ denotes the number of hubs in $G$ and the constants $D(\varepsilon,\omega)$ and $c(\omega)$ are the same as in Theorem \ref{thm:star} independent of $N$ and $x$.
\end{theorem}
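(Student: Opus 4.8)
\textbf{Proof proposal for Theorem \ref{thm:loc_star}.} The idea is to peel off, hub by hub, the contribution of the non-low-degree neighbours as a deterministic $O(\alpha(1-\nu))$ error and then run the star argument of Theorem \ref{thm:star} on the low-degree neighbourhood. Fix a hub $i\in\Hil_\Delta$, put $\alpha_i=\alpha\nu_i k_i\Delta_0^{-1}$, and split the coupling sum in $\xi_{i,m}$ according to whether a neighbour of $i$ lies in $\Lil_\delta$:
\begin{equation*}
\xi_{i,m}(\theta^t\omega,x^t)=\alpha_i\left[\tfrac{1}{\nu_i k_i}\sum_{j\in\Nil_i\cap\Lil_\delta}h(\theta^t\omega,z_i^t,x_j^t)-\int_{\T}h(\theta^t\omega,z_i^t,y)\,\mathrm{d}m(y)\right]+\tfrac{\alpha}{\Delta_0}\sum_{j\in\Nil_i\setminus\Lil_\delta}h(\theta^t\omega,z_i^t,x_j^t).
\end{equation*}
The last sum has $(1-\nu_i)k_i$ terms, each bounded by $\sup_{t\in\N}\|h(\theta^t\omega,\cdot,\cdot)\|_{C^0}$, and $k_i\leq\Delta_0$; hence for every $t$ this term is at most $\alpha(1-\nu_i)\sup_{t\in\N}\|h(\theta^t\omega,\cdot,\cdot)\|_{C^0}\leq\alpha(1-\nu)\sup_{t\in\N}\|h(\theta^t\omega,\cdot,\cdot)\|_{C^0}$, which is exactly the additive correction to the reduction tolerance in the statement. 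It remains to control the bracketed ``star-like'' term: this is the star fluctuation $\xi_m$ of Theorem \ref{thm:star} with the coupling $\alpha$ replaced by $\alpha_i\leq\alpha$, the size $L$ replaced by $\nu_i k_i\geq\nu\Delta$, and the node map $\varphi_1$ replaced by $\varphi_i$. Note $\varepsilon\geq\alpha\Delta^{-1/2}$ implies $\varepsilon\geq\alpha_i(\nu_i k_i)^{-1/2}$, since $\alpha_i(\nu_i k_i)^{-1/2}=\alpha(\nu_i k_i)^{1/2}\Delta_0^{-1}\leq\alpha\Delta_0^{-1/2}\leq\alpha\Delta^{-1/2}$.

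For the star-like term I would repeat the three-term decomposition of the proof of Theorem \ref{thm:star} verbatim. The (decoupling) term $\tfrac{\alpha}{\Delta_0}\sum_{j\in\Nil_i\cap\Lil_\delta}\bigl[h(\theta^t\omega,z_i^t,x_j^t)-h(\theta^t\omega,z_i^t,x_{s,j}^t)\bigr]$ is bounded, exactly as in Lemma \ref{lemma:zeta1}, by $\alpha_i\sup_{t\in\N}|h(\theta^t\omega,\cdot,\cdot)|_{\mathrm{Lip}}\,\varepsilon_s(\alpha\Delta_0^{-1}\delta\sup_{t\in\N}\|h(\theta^t\omega,\cdot,\cdot)\|_{C^0})\leq\varepsilon/4$; here the prefactor is $\tfrac{\alpha}{\Delta_0}\#(\Nil_i\cap\Lil_\delta)=\alpha_i\leq\alpha$, and a low-degree node $j\in\Lil_\delta$ runs a $\varphi_j$-pseudo-orbit with one-step error at most $\alpha\Delta_0^{-1}\delta\sup_{t\in\N}\|h(\theta^t\omega,\cdot,\cdot)\|_{C^0}$ because $k_j\leq\delta$, so (Shadowing) applies. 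The high-Fourier-mode term (HFM) is controlled by the decay of Fourier coefficients (Lemma \ref{cor:decayFouriercoeff}) exactly as in Lemma \ref{lemma:zeta2}, with the same cutoff $D(\varepsilon,\omega)\asymp\varepsilon^{-1}\sup_{t\in\N}\|h(\theta^t\omega,\cdot,\cdot)\|_{C^4}$ (using $\alpha_i\leq\alpha$), giving $\leq\varepsilon/2$ for all $t$. The low-Fourier-mode term (LFM) is controlled, as in Lemma \ref{lemma:zeta3}, by rare visits of the restricted shadowing orbit $(\varphi_j(t,\omega_s,x_{s,j}))_{j\in\Nil_i\cap\Lil_\delta}$ to the bad sets $B(\varepsilon_b,\phi_n)$, $1\leq|n|\leq D$, now viewed as subsets of $\T^{\Nil_i\cap\Lil_\delta}$, with $\varepsilon_b=30\varepsilon/(\alpha\sup_{t\in\N}\|h(\theta^t\omega,\cdot,\cdot)\|_{C^4})$; this yields $\leq\varepsilon/4$ whenever these bad sets are avoided. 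The input needed for (LFM) is that (Typicality), stated for the full vector of $\Lil_\delta$-coordinates, passes to the submarginal: pushing forward the empirical measures under the continuous coordinate projection $\T^{\Lil_\delta}\to\T^{\Nil_i\cap\Lil_\delta}$ preserves weak$^*$ convergence, so the restricted orbit is $m^{\otimes\nu_i k_i}$-typical; then Proposition \ref{prop:sizebadset} with $L$ replaced by $\nu_i k_i\geq\nu\Delta$ together with the null-boundary argument of Lemma \ref{lemma:RDSvisitfreq} shows each such bad set is visited with asymptotic frequency at most $2\exp(-\tfrac12\nu\Delta\,\varepsilon_b^2)=2\exp(-\nu\Delta\,\varepsilon^2\alpha^{-2}c(\omega))$ with $c(\omega)=450/\sup_{t\in\N}\|h(\theta^t\omega,\cdot,\cdot)\|_{C^4}^2$.

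Finally I would union-bound over the $2D$ bad sets attached to each hub and over all $M$ hubs --- at most $2MD$ events --- so that outside a set of times of asymptotic frequency at most $\rho(\varepsilon,\omega)=4MD(\varepsilon,\omega)\exp(-\nu\Delta\,\varepsilon^2\alpha^{-2}c(\omega))$ all of them are simultaneously avoided; on the complementary set of times, every hub $i\in\Hil_\Delta$ satisfies $|\xi_{i,m}|\leq\varepsilon/4+\varepsilon/2+\varepsilon/4+\alpha(1-\nu_i)\sup_{t\in\N}\|h(\theta^t\omega,\cdot,\cdot)\|_{C^0}\leq\varepsilon+\alpha(1-\nu)\sup_{t\in\N}\|h(\theta^t\omega,\cdot,\cdot)\|_{C^0}$, i.e.\ the claimed $(\varepsilon+\alpha(1-\nu)\sup_{t\in\N}\|h(\theta^t\omega,\cdot,\cdot)\|_{C^0})$-reduction to $\varphi_{\alpha_i,m}$. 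I expect the main obstacle to be bookkeeping rather than any single deep estimate: one must keep $D(\varepsilon,\omega)$ and $c(\omega)$ independent of $N$, $x$, and of the particular hub, which forces the uniform substitution $\alpha_i\leq\alpha$ (so that one fixed $\varepsilon$-budget split works for every hub at once) and forces the factor $\nu$ in the exponent to be extracted solely from $\nu_i k_i\geq\nu\Delta$; the crude $O(\alpha(1-\nu))$ treatment of the uncontrolled non-low-degree neighbours --- harmless precisely because $k_i\leq\Delta_0$ cancels the count $(1-\nu_i)k_i$ --- is what lets the argument run without any degree gap.
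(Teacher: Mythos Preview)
Your proposal is correct and follows essentially the same approach as the paper: the four-term split $\zeta_{i,d}+\zeta_{i,c}+\zeta_{i,h}+\zeta_{i,\ell}$ (you peel off $\zeta_{i,c}$ first and then run the star three-term decomposition), the same $\varepsilon/4,\varepsilon/2,\varepsilon/4$ budgeting, the same cutoff $D(\varepsilon,\omega)$ and choice $\varepsilon_b=30\varepsilon/(\alpha\sup_t\|h\|_{C^4})$, and the union bound over $2MD$ bad sets. Your explicit remark that (Typicality) passes to the $\Nil_i\cap\Lil_\delta$-submarginal via continuous projection is a point the paper leaves implicit (it instead defines the bad sets $B_{i,n}$ as cylinders in $\T^N$ and invokes Lemma~\ref{lemma:RDSvisitfreq} directly), but the two formulations are equivalent.
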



We will prove this theorem in a similar way as we did the reduction on a star Theorem \ref{thm:star}. The main modification concerns the bad sets for multiple hubs and the control of non-low-degree neighbors of the hubs.

\subsubsection{Proof of reduction theorem on a locally star-like network}
\textit{Proof of Theorem \ref{thm:loc_star}.}
We wiill split the fluctuation $\xi_{i,m}(\theta^t\omega,x^t)$ at hub $i\in\Hil_{\Delta}$ 
\begin{align*}
	\xi_{i,m}(\theta^t\omega,x^t) =&  \frac{\alpha}{\Delta_0} \sum_{j\in \Nil_i} h(\theta^t\omega,z_i^t,x_j^t) - \alpha\frac{\nu_i k_i}{\Delta_0}\int_{\T} h(\theta^t\omega,z_i^t,y)\mathrm{d}m(y) 
\end{align*}
into four components. By (Shadowing), the first term 
\begin{align*} \tag*{(decoupling)}
	\zeta_{i,d}(t,\omega,x,\omega_{{s}},x_{{s}}) :=& \frac{\alpha}{\Delta_0} \sum_{j\in\Nil_i\cap\Lil_{\delta}} \left[ h(\theta^t\omega,z_i^t,x_j^t) - h(\theta^t\omega,z_i^t,x_{{s},j}^t) \right]
\end{align*}
decouples the low-degree node orbits $x_j^t=\Phi_{\alpha}(t,\omega,x)_j, j\in\Nil_i\cap \Lil_{\delta}$ by comparing with the isolated shadowing orbits $x_{{s},j}^t =\varphi_j(t,\omega_{{s}},x_{{s},j})$, which will be controlled by the shadowing precision and Lipschitz continuity of $h(\theta^t\omega,\cdot,\cdot)$, see Lemma \ref{lemma:zeta_i1}. The second term gathers  contribution from the non-low-degree neighbors
\begin{align*}\tag*{(non~ldn)}
	\zeta_{i,c}(t,\omega,x,\omega_{{s}},x_{{s}}) := & \frac{\alpha}{\Delta_0} \sum_{j\in \Nil_i\setminus\Lil_{\delta}} h(\theta^t\omega,z_i^t,x_j^t),
\end{align*}
which will be controlled by the star-like index $\frac{\# \Nil_i\setminus\Lil_{\delta}}{\Delta_0}\leq 1-\nu\approx0$, see Lemma \ref{lemma:zeta_i2}. This is a new term that was not present in the proof of Theorem \ref{thm:star}. Now we have
$$\xi_{i,m}(\theta^t\omega,x^t) \leq \zeta_{i,d} + \zeta_{i,c} + \alpha \frac{\nu_1k_1}{\Delta_0}\left[ \frac{1}{\nu_ik_i}\sum_{j\in\Nil_i\cap\Lil_{\delta}}h(\theta^t\omega,z_i^t,x_{{s},j}^t) -\int_{\T}h(\theta^t\omega,z_i^t,y)\mathrm{d}m(y)\right].$$
By (Typicality) the shadowing orbit $\left(x_{{s},j}^t\right)_{j\in\Lil_{\delta}}$ distributes in time as $m^{\otimes L}$ and thus visits any bad set with controlled asymptotic frequency. To find the appropriate bad sets, we take Fourier series expansion for $h(\omega,x_1,x_2)$ and note that the Fourier modes corresponding to $n_2=0$ drop out, similar to the proof of Theorem \ref{thm:star}.

Again we gather the remaining high Fourier modes $\phi_{n_2}$ with $|n_2|>D$ above some cutoff level $D$
\begin{align*}\tag*{(HFM)}
	\zeta_{i,h}(t,\omega,x,\omega_{{s}},x_{{s}}) :=& \alpha \frac{\nu_i k_i}{\Delta_0}\sum_{|n_2|>D}  \left[a_{(0,n_2)}^{\theta^t\omega}   +  \sum_{n_1\neq0} a_{(n_1,n_2)}^{\theta^t\omega} \phi_{n_1}(z_i^t) \right]\ \left[\frac{1}{\nu_i k_i} \sum_{j\in\Nil_i\cap\Lil_{\delta}}\phi_{n_2}(x_{{s},j}^t) - \int_{\T} \phi_{n_2}(y) \mathrm{d}m(y)\right],
\end{align*}
which will be controlled by decay of Fourier coefficients, see Lemma \ref{lemma:zeta_i3}. Lastly, the low Fourier modes $\phi_{n_2}$ with $1\leq|n_2|\leq D$ make up the last term
\begin{align*}\tag*{(LFM)}
	\zeta_{i,\ell}(t,\omega,x,\omega_{{s}},x_{{s}}) :=& \alpha \frac{\nu_i k_i}{\Delta_0}\sum_{1\leq |n_2|\leq D} \left[a_{(0,n_2)}^{\theta^t\omega}   +  \sum_{n_1\neq0} a_{(n_1,n_2)}^{\theta^t\omega} \phi_{n_1}(z_i^t)   \right]\left[\frac{1}{\nu_i k_i} \sum_{j\in\Nil_i\cap\Lil_{\delta}}\phi_{n_2}(x_{{s},j}^t) - \int_{\T} \phi_{n_2}(y) \mathrm{d}m(y)\right],
\end{align*}
which will be controlled by rare visits of the shadowing orbit $\left(x_{{s},j}^t\right)_{j\in\Lil_{\delta}}$ to the bad sets, to be defined for each hub $i\in\Hil_{\Delta}$, see Lemma \ref{lemma:zeta_i4}.
In the next four lemmas, we estimate each of $\zeta_{i,d},\zeta_{i,c},\zeta_{i,h},\zeta_{i,\ell}$.


\begin{lemma}[Shadowing estimates for low degree nodes]\label{lemma:zeta_i1}
	Under the hypotheses of Theorem \ref{thm:loc_star}, we control the $\mathrm{(decoupling)}$ term
	\[ 
	|\zeta_{i,d}(t,\omega,x,\omega_{{s}},x_{{s}})|\leq \varepsilon/4,~~~~\forall i\in\Hil_{\Delta},~~\forall t\in\N.
	\]
\end{lemma}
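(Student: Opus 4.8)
The plan is to carry over the argument of Lemma \ref{lemma:zeta1} from the star case almost verbatim, accounting for the one structural difference: in (\ref{eq:Ghub_dynamics}) the coupling received by hub $i$ is normalized by the global maximum degree $\Delta_0$ rather than by $k_i$, and hub $i$ has $\#(\Nil_i\cap\Lil_{\delta})=\nu_i k_i$ low-degree neighbors rather than all $L$ nodes. First I would use the Lipschitz continuity of $h(\theta^t\omega,\cdot,\cdot)\in C^4(\T^2;\R)$ in its second argument to estimate each summand of the (decoupling) term by $|h(\theta^t\omega,\cdot,\cdot)|_{\mathrm{Lip}}\, d_{\T}(x_j^t,x_{{s},j}^t)$. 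Summing over the $\nu_i k_i$ indices $j\in\Nil_i\cap\Lil_{\delta}$ and bounding the pointwise distances via the (Shadowing) hypothesis of Theorem \ref{thm:loc_star}, which guarantees $\sup_{t\in\N}\max_{j\in\Lil_{\delta}}d_{\T}(x_j^t,x_{{s},j}^t)\leq\varepsilon_s(\alpha\Delta_0^{-1}\delta\sup_{t\in\N}\|h(\theta^t\omega,\cdot,\cdot)\|_{C^0})$, gives
\[
|\zeta_{i,d}(t,\omega,x,\omega_{{s}},x_{{s}})|\leq \frac{\alpha}{\Delta_0}\,\nu_i k_i\,\sup_{t\in\N}|h(\theta^t\omega,\cdot,\cdot)|_{\mathrm{Lip}}\,\varepsilon_s\!\left(\alpha\Delta_0^{-1}\delta\sup_{t\in\N}\|h(\theta^t\omega,\cdot,\cdot)\|_{C^0}\right).
\]

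The key step is then to observe that $\nu_i k_i\leq k_i\leq\Delta_0$ for every hub $i\in\Hil_{\Delta}$, so that the prefactor $\alpha\nu_i k_i/\Delta_0$ is at most $\alpha$, uniformly over all $M$ hubs. This is exactly the analogue of the clean factor $\alpha$ obtained in the star case, where the normalization was by $L=N-1$, the hub degree; here normalizing by the possibly larger $\Delta_0$ only helps. Consequently
\[
|\zeta_{i,d}(t,\omega,x,\omega_{{s}},x_{{s}})|\leq \alpha\sup_{t\in\N}|h(\theta^t\omega,\cdot,\cdot)|_{\mathrm{Lip}}\,\varepsilon_s\!\left(\alpha\Delta_0^{-1}\delta\sup_{t\in\N}\|h(\theta^t\omega,\cdot,\cdot)\|_{C^0}\right)\leq\varepsilon/4,
\]
where the last inequality is the lower bound on $\varepsilon$ imposed in the statement of Theorem \ref{thm:loc_star}. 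Since the (Shadowing) bound is uniform in $t$, so is the conclusion, giving the claim for all $t\in\N$ and all $i\in\Hil_{\Delta}$.

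I do not expect a genuine obstacle in this lemma — it is essentially a one-line Lipschitz-plus-shadowing estimate. The only points requiring care are (a) the uniformity in $i$, which is automatic from $k_i\leq\Delta_0$, and (b) checking that the argument $\alpha\Delta_0^{-1}\delta\sup_{t\in\N}\|h(\theta^t\omega,\cdot,\cdot)\|_{C^0}$ of $\varepsilon_s$ is indeed the correct pseudo-orbit precision for the low-degree nodes: as recorded just after (\ref{eq:Ghub_dynamics}), each node $j\in\Lil_{\delta}$ has at most $\delta$ neighbors, so its trajectory is a pseudo-orbit of $\varphi_j$ with per-step error at most $\alpha\Delta_0^{-1}\delta\|h(\theta^t\omega,\cdot,\cdot)\|_{C^0}$, which is precisely what makes (Shadowing) applicable. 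The genuine difficulties of Theorem \ref{thm:loc_star} are deferred to the subsequent lemmas handling the non-low-degree term $\zeta_{i,c}$ and the bad-set bookkeeping over the $M$ hubs.
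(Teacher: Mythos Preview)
Your proposal is correct and takes essentially the same approach as the paper, which simply refers back to the proof of Lemma~\ref{lemma:zeta1} and the hypothesis on $\varepsilon$ in Theorem~\ref{thm:loc_star}. Your write-up is in fact more detailed than the paper's one-line proof, and the key observation $\nu_i k_i\leq k_i\leq\Delta_0$ that you highlight is exactly what makes the star argument go through uniformly in $i$.
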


\textit{Proof.}  
The estimate follows by assumption of  Theorem \ref{thm:loc_star} similar to the proof of Lemma \ref{lemma:zeta1}. \qed

\begin{lemma}[Non low degree nodes estimates]\label{lemma:zeta_i2}
	Under the hypotheses of Theorem \ref{thm:loc_star}, we control the contribution $\mathrm{(non ~ldn)}$ from non low degree neighbors
	\[
	|\zeta_{i,c}(t,\omega,x,\omega_{{s}},x_{{s}})|\leq \alpha(1-\nu)\sup_{t\in\N} \|h(\theta^t\omega,\cdot,\cdot)\|_{C^0},~~~~\forall i\in\Hil_{\Delta},~~\forall t\in\N.
	\]
\end{lemma}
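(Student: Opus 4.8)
\textbf{Proof proposal for Lemma \ref{lemma:zeta_i2}.} The plan is to bound $\zeta_{i,c}$ by a pure counting argument combined with the triangle inequality; no dynamics, typicality, or shadowing is needed here, since this is precisely the term that the locally star-like hypothesis is designed to control. First I would observe that the sum defining
\[
\zeta_{i,c}(t,\omega,x,\omega_{{s}},x_{{s}}) = \frac{\alpha}{\Delta_0} \sum_{j\in \Nil_i\setminus\Lil_{\delta}} h(\theta^t\omega,z_i^t,x_j^t)
\]
ranges over the non-low-degree neighbors of hub $i$, of which there are exactly $\#(\Nil_i\setminus\Lil_{\delta}) = k_i - \#(\Nil_i\cap\Lil_{\delta}) = (1-\nu_i)k_i$ by the definition of the star-like index $\nu_i$.

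Next I would apply the triangle inequality term by term, bounding each summand $|h(\theta^t\omega,z_i^t,x_j^t)|$ by $\sup_{s\in\N}\|h(\theta^s\omega,\cdot,\cdot)\|_{C^0}$, which holds uniformly over all points $(z_i^t,x_j^t)\in\T^2$ and all times $t$. This gives
\[
|\zeta_{i,c}(t,\omega,x,\omega_{{s}},x_{{s}})| \leq \frac{\alpha}{\Delta_0}\,(1-\nu_i)\,k_i\,\sup_{s\in\N}\|h(\theta^s\omega,\cdot,\cdot)\|_{C^0}.
\]
Finally I would use that $G$ is $(\Delta,\delta,\nu)$-locally star-like, so $\nu_i\geq\nu$ for every $i\in\Hil_{\Delta}$, hence $1-\nu_i\leq 1-\nu$, together with the trivial bound $k_i\leq\Delta_0$ on the maximum degree; substituting these yields $\frac{(1-\nu_i)k_i}{\Delta_0}\leq 1-\nu$ and therefore $|\zeta_{i,c}|\leq\alpha(1-\nu)\sup_{s\in\N}\|h(\theta^s\omega,\cdot,\cdot)\|_{C^0}$ for every $i\in\Hil_{\Delta}$ and every $t\in\N$, as claimed.

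There is no real obstacle in this lemma: it is a one-line estimate once the cardinality $\#(\Nil_i\setminus\Lil_{\delta})=(1-\nu_i)k_i$ is identified. The only point worth flagging is that this is the genuinely new term absent in the star case (Theorem \ref{thm:star}), and the bound is deliberately crude — it loses a factor of order $1-\nu$ rather than the exponentially small frequencies obtained for the shadowed low-degree contributions — which is exactly why the error tolerance in Theorem \ref{thm:loc_star} is inflated from $\varepsilon$ to $\varepsilon+\alpha(1-\nu)\sup_{t}\|h(\theta^t\omega,\cdot,\cdot)\|_{C^0}$.
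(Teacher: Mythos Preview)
Your proof is correct and is precisely the argument the paper has in mind: the paper's own proof reads simply ``follows directly from the $(\Delta,\delta,\nu)$-locally star-like properties of $G$,'' and your counting $\#(\Nil_i\setminus\Lil_{\delta})=(1-\nu_i)k_i$ together with $\nu_i\geq\nu$ and $k_i\leq\Delta_0$ is exactly how one unpacks that sentence.
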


\textit{Proof} follows directly from the $(\Delta,\delta,\nu)$-locally star-like properties of $G$. \qed 

\begin{lemma}[High Fourier modes controlled by decay of Fourier coefficients] \label{lemma:zeta_i3}
	Under the hypotheses of Theorem \ref{thm:loc_star}, by choosing
	\[
	D(\varepsilon,\omega) \asymp \varepsilon^{-1}\sup_{t\in\N }\|h(\theta^t\omega,\cdot,\cdot)\|_{C^4}
	\] 
	as in Lemma \ref{lemma:zeta2}, we estimate $\mathrm{(HFM)}$ 
	\[
	|\zeta_{i,h}(t,\omega,x,\omega_{{s}},x_{{s}})|\leq \varepsilon/2,~~~~\forall i\in\Hil_{\Delta},~~\forall t\in\N.
	\]
\end{lemma}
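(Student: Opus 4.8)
\textit{Proof sketch (plan).}
The plan is to run essentially the same argument as in Lemma~\ref{lemma:zeta2}, the only structural differences being the extra prefactor $\nu_i k_i/\Delta_0$ and the fact that the empirical average in the inner bracket now runs over the $\nu_i k_i$ low-degree neighbors $\Nil_i\cap\Lil_{\delta}$ of the hub $i$ rather than over all $L$ low-degree nodes. The first step is to observe that this prefactor is harmless: since $\nu_i k_i\le k_i\le\Delta_0$ for every $i\in\Hil_{\Delta}$, we have $\nu_i k_i/\Delta_0\le 1$, so it can be discarded at the outset, uniformly in $i$. The second step is to bound the inner bracket crudely by the triangle inequality,
\[
\left|\frac{1}{\nu_i k_i}\sum_{j\in\Nil_i\cap\Lil_{\delta}}\phi_{n_2}(x_{{s},j}^t)-\int_{\T}\phi_{n_2}(y)\mathrm{d}m(y)\right|\ \le\ 2\|\phi_{n_2}\|_{\infty}=2,
\]
exactly as in the star case; note this step does not use (Typicality) — the high Fourier modes are killed by smallness of the coefficients, not by rare visits to bad sets.

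The third step is to insert the Fourier decay estimates of Lemma~\ref{cor:decayFouriercoeff}, namely $|a^{\theta^t\omega}_{(0,n_2)}|\le\|h(\theta^t\omega,\cdot,\cdot)\|_{C^4}/((2\pi)^4 n_2^4)$ and $|a^{\theta^t\omega}_{(n_1,n_2)}|\le\|h(\theta^t\omega,\cdot,\cdot)\|_{C^4}/((2\pi)^4 n_1^2 n_2^2)$, and to split the sum into its $n_1=0$ and $n_1\neq 0$ parts. Using $\sum_{n\ge 1}1/n^2=\pi^2/6$, this reproduces verbatim the bound obtained in the proof of Lemma~\ref{lemma:zeta2}:
\[
|\zeta_{i,h}(t,\omega,x,\omega_{{s}},x_{{s}})|\ \le\ \frac{2\alpha\sup_{t\in\N}\|h(\theta^t\omega,\cdot,\cdot)\|_{C^4}}{(2\pi)^4}\sum_{|n_2|>D}\frac{1}{n_2^4}\ +\ \frac{\alpha\sup_{t\in\N}\|h(\theta^t\omega,\cdot,\cdot)\|_{C^4}}{24\pi^2}\sum_{|n_2|>D}\frac{1}{n_2^2}.
\]
Since $\sum_{|n|>D}1/n^4\asymp D^{-3}$ and $\sum_{|n|>D}1/n^2\asymp D^{-1}$, choosing $D(\varepsilon,\omega)=\max\{D_1,D_2\}$ with $D_1\asymp\varepsilon^{-1/3}\sup_{t\in\N}\|h(\theta^t\omega,\cdot,\cdot)\|_{C^4}$ and $D_2\asymp\varepsilon^{-1}\sup_{t\in\N}\|h(\theta^t\omega,\cdot,\cdot)\|_{C^4}$ forces each of the two terms below $\varepsilon/4$, hence $|\zeta_{i,h}|\le\varepsilon/2$; as $D_2$ dominates, $D(\varepsilon,\omega)\asymp\varepsilon^{-1}\sup_{t\in\N}\|h(\theta^t\omega,\cdot,\cdot)\|_{C^4}$, matching the cutoff used in Lemma~\ref{lemma:zeta2}.

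The final step is to check uniformity: the resulting bound depends neither on the hub $i$ (the prefactor was dropped using only $\nu_i k_i\le\Delta_0$) nor on the time $t$ (we passed to $\sup_{t\in\N}\|h(\theta^t\omega,\cdot,\cdot)\|_{C^4}$), so the estimate holds for all $i\in\Hil_{\Delta}$ and all $t\in\N$ simultaneously, and the same $D(\varepsilon,\omega)$ serves every hub. I expect no genuine obstacle here: the content is identical to Lemma~\ref{lemma:zeta2} once the prefactor $\nu_i k_i/\Delta_0\le 1$ is absorbed; the only point to be mildly careful about is that the crude bound on the inner bracket is insensitive to how many low-degree neighbors a hub has, which is what lets the argument pass through unchanged from the star to the locally star-like setting.
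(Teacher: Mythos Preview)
Your proposal is correct and follows essentially the same approach as the paper, which simply remarks that the estimates are identical to those of Lemma~\ref{lemma:zeta2} with $\alpha$ replaced by $\alpha_i=\alpha\nu_i k_i/\Delta_0\le\alpha$. Your observation that the prefactor $\nu_i k_i/\Delta_0\le 1$ can be absorbed is exactly this replacement, and the rest of your argument reproduces the proof of Lemma~\ref{lemma:zeta2} verbatim.
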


\textit{Proof.} 
The estimates are similar to Lemma \ref{lemma:zeta2} with $\alpha$ replaced by $\alpha_i = \alpha\frac{\nu_i\kappa_i}{\Delta_0}$.\qed

\begin{lemma}[Low Fourier modes controlled by rare visits to the bad sets]\label{lemma:zeta_i4}
	Under the hypotheses of Theorem \ref{thm:loc_star}, by choosing $$D(\varepsilon,\omega) \asymp \varepsilon^{-1} \sup_{t\in\N }\|h(\theta^t\omega,\cdot,\cdot)\|_{C^4},$$ we control $\mathrm{(LFM)}$ 
	\[
	|\zeta_{i,\ell}(t,\omega,x,\omega_{{s}},x_{{s}})|\leq \varepsilon/4,~~~~\forall i\in\Hil_{\Delta}
	\]
	with exponentially small exceptional asymptotic frequency at most $\rho$ with 
	$$\rho(\varepsilon,\omega)= 4MD(\varepsilon,\omega) \exp\left(-\nu\Delta\frac{450\varepsilon^2}{\alpha^2\sup_{t\in\N }\|h(\theta^t\omega,\cdot,\cdot)\|^2_{C^4}} \right).$$
\end{lemma}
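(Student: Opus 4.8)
\textit{Proof (sketch).} The plan is to run the bad-set argument of Lemma \ref{lemma:zeta3} hub by hub; the only new features are that each hub $i$ averages over the smaller index set $\Nil_i\cap\Lil_{\delta}$, of size $\nu_ik_i\geq\nu\Delta$, and that we must control all $M$ hubs simultaneously. For each hub $i\in\Hil_{\Delta}$ and each Fourier index $n$ with $1\leq|n|\leq D$, I would introduce the hub-specific bad set
$$B_{i,n}:=\left\{x\in\T^N:\left|\frac{1}{\nu_ik_i}\sum_{j\in\Nil_i\cap\Lil_{\delta}}\phi_n(x_j)-\int_{\T}\phi_n\,\mathrm{d}m\right|>\varepsilon_b\right\},$$
which depends only on the coordinates indexed by $\Nil_i\cap\Lil_{\delta}\subseteq\Lil_{\delta}$. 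Since under $m^{\otimes N}$ these $\nu_ik_i$ coordinates carry independent copies of $\phi_n$ bounded by $\|\phi_n\|_{\infty}=1$, Hoeffding's inequality (as in Proposition \ref{prop:sizebadset}) gives the bound $m^{\otimes N}(B_{i,n})\leq 2\exp(-\nu_ik_i\,2^{-1}\varepsilon_b^2)\leq 2\exp(-\nu\Delta\,2^{-1}\varepsilon_b^2)$, uniformly in $i$ and $n$.

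Next I would transfer this volume bound into a frequency-of-visits bound for the shadowing orbit. Since $B_{i,n}$ is the pullback of a superlevel set of a continuous function on $\T^{\Lil_{\delta}}$, the null-boundary approximation of Lemma \ref{lemma:RDSvisitfreq} applies verbatim: using the (Typicality) hypothesis and the Portmanteau theorem — together with the fact that the marginal of $m^{\otimes L}$ on the coordinates $\Nil_i\cap\Lil_{\delta}$ is again a product of copies of $m$ — the asymptotic frequency with which $(x_{{s},j}^t)_{j\in\Lil_{\delta}}$ visits $B_{i,n}$ is at most $2\exp(-\nu\Delta\,2^{-1}\varepsilon_b^2)$. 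A union bound over the $2MD$ bad sets $\{B_{i,n}:i\in\Hil_{\Delta},\ 1\leq|n|\leq D\}$ then shows that all of them are avoided simultaneously outside an exceptional set of asymptotic frequency at most $\rho(\varepsilon,\omega)=2MD\cdot2\exp(-\nu\Delta\,2^{-1}\varepsilon_b^2)$.

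On that complementary set of times I would bound (LFM) for every $i\in\Hil_{\Delta}$ exactly as in Lemma \ref{lemma:zeta3}: factor out $\alpha\nu_ik_i\Delta_0^{-1}\leq\alpha$, use $\left|\frac{1}{\nu_ik_i}\sum_{j\in\Nil_i\cap\Lil_{\delta}}\phi_{n_2}(x_{{s},j}^t)-\int_{\T}\phi_{n_2}\,\mathrm{d}m\right|\leq\varepsilon_b$, and insert the Fourier-coefficient decay estimates of Lemma \ref{cor:decayFouriercoeff}, which yields $|\zeta_{i,\ell}(t,\omega,x,\omega_{{s}},x_{{s}})|\leq\frac{\alpha\sup_{t\in\N}\|h(\theta^t\omega,\cdot,\cdot)\|_{C^4}}{120}\varepsilon_b$. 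Choosing $\varepsilon_b$ as in (\ref{eq:epsilon_E}), i.e. $\varepsilon_b=\frac{30}{\alpha\sup_{t\in\N}\|h(\theta^t\omega,\cdot,\cdot)\|_{C^4}}\varepsilon$, gives $|\zeta_{i,\ell}|\leq\varepsilon/4$ off the exceptional set, and substituting this $\varepsilon_b$ into $\rho$ produces $\rho(\varepsilon,\omega)=4MD(\varepsilon,\omega)\exp\left(-\nu\Delta\,450\varepsilon^2/(\alpha^2\sup_{t\in\N}\|h(\theta^t\omega,\cdot,\cdot)\|_{C^4}^2)\right)$, with the cutoff $D(\varepsilon,\omega)\asymp\varepsilon^{-1}\sup_{t\in\N}\|h(\theta^t\omega,\cdot,\cdot)\|_{C^4}$ inherited from Lemma \ref{lemma:zeta_i3}. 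I expect the main (mild) obstacle to be purely organizational: one must check that each individual exponential rate is governed by the \emph{worst} hub, i.e. by $\nu\Delta=\min_{i\in\Hil_{\Delta}}\nu_ik_i$ rather than by a particular $\nu_ik_i$, and that the union bound over the $M$ hubs costs only the polynomial prefactor $M$, which is harmless since $M\sim w^{\beta-1}$ by Theorem B.
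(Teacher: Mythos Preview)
Your proposal is correct and follows essentially the same approach as the paper: define hub-specific bad sets $B_{i,n}$, bound their $m^{\otimes N}$-measure via Hoeffding (uniformly by $\nu_ik_i\geq\nu\Delta$), transfer to asymptotic visit frequency via (Typicality) and Portmanteau as in Lemma \ref{lemma:RDSvisitfreq}, take the union over the $2MD$ bad sets, and then bound (LFM) on the complement using the Fourier-decay estimates and the same choice $\varepsilon_b=\frac{30}{\alpha\sup_{t}\|h(\theta^t\omega,\cdot,\cdot)\|_{C^4}}\varepsilon$. The only quibble is notational: $\nu\Delta$ is a lower bound for $\min_{i\in\Hil_{\Delta}}\nu_ik_i$, not necessarily the minimum itself, but this is exactly what the argument needs.
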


\textit{Proof.} 
We estimate $|\zeta_{i,\ell}(t,\omega,x,\omega_{{s}},x_{{s}})|$ as a problem of frequency of visits to the bad sets 
$$B_{i,n}= B_i(\varepsilon_b, \phi_n):=\left\{x\in\T^N: \left|\frac{1}{\nu_i k_i}\sum_{j\in\Nil_i\cap\Lil_{\delta}} \phi_n(x_j) - \int_{\T} \phi_n(y)\mathrm{d}m(y)\right|>\varepsilon_b\right\},$$
for $i\in\Hil_{\Delta}$ and $n=\pm1,\cdots,\pm D$, totaling $2MD$ bad sets. By a similar argument to that for Size of Bad Sets Proposition \ref{prop:sizebadset}, we use Hoeffding inequality to obtain estimates for each of them
$$m^{\otimes N} (B_{i,n}) \leq 2\exp(-\nu_i k_i2^{-1} \|\phi_{n}\|_{C^0}^{-2} \varepsilon_b^2) \leq 2\exp(-\nu\Delta 2^{-1}\varepsilon_b^2).$$

An argument similar to Lemma \ref{lemma:RDSvisitfreq} implies that the shadowing orbit $x_{{s}}^t = (\varphi_{N-L+1}(t,\omega_{{s}}, x_{{s},N-L+1}),\cdots,\varphi_{N}(t,\omega_{{s}}, x_{{s},N}))$ visits each $B_{i,n}$ with exponentially small frequency at most 
$ 2\exp(-\nu\Delta 2^{-1} \varepsilon_b^2)$. 
Whenever all such bad sets are avoided, we estimate
\begin{align*}
	|\zeta_{i,\ell}(t,\omega,x,\omega_{{s}},x_{{s}})| \leq& \alpha \frac{\nu_i k_i}{\Delta_0}\sum_{1\leq |n_2|\leq D} \left[|a_{(0,n_2)}^{\theta^t\omega}|   +  \sum_{n_1\neq0} |a_{(n_1,n_2)}^{\theta^t\omega}|  \right]\left|\frac{1}{\nu_i k_i} \sum_{j\in\Nil_i\cap\Lil_{\delta}}\phi_{n_2}(x_{{s},j}^t) - \int_{\T} \phi_{n_2}(y) \mathrm{d}m(y)\right|\\
	\leq& \alpha \sum_{1\leq |n_2|\leq D} \left[ \frac{\sup_{t\in\N }\|h(\theta^t\omega,\cdot,\cdot)\|_{C^4}}{(2\pi)^4 n_2^4} + \sum_{n_1\neq 0} \frac{\sup_{t\in\N }\|h(\theta^t\omega,\cdot,\cdot)\|_{C^4}}{(2\pi)^4 n_1^2n_2^2} \right] \varepsilon_b\\
	\leq& \frac{\alpha\sup_{t\in\N }\|h(\theta^t\omega,\cdot,\cdot)\|_{C^4}}{(2\pi)^4 } \left[\frac{2\pi^4}{90} + \frac{2\pi^2}{6}\frac{2\pi^2}{6} \right]\varepsilon_b = \frac{\alpha\sup_{t\in\N }\|h(\theta^t\omega,\cdot,\cdot)\|_{C^4}}{120 } \varepsilon_b.
\end{align*}
By choosing 
$$\varepsilon_b=\frac{30}{\alpha\sup_{t\in\N }\|h(\theta^t\omega,\cdot,\cdot)\|_{C^4}}\varepsilon,~~~~D(\varepsilon,\omega)= \max\{D_1(\varepsilon,\omega),D_2(\varepsilon,\omega)\}\asymp \varepsilon^{-1} \sup_{t\in\N }\|h(\theta^t\omega,\cdot,\cdot)\|_{C^4},$$
we obtain $|\zeta_{i,\ell}(t,\omega,x,\omega_{{s}},x_{{s}})| \leq \varepsilon/4$ with exponentially small exceptional frequency at most $\rho$ with
\begin{align*}
	\rho(\varepsilon,\omega)=4MD(\varepsilon,\omega) \exp(-\nu\Delta 2^{-1} \varepsilon_b^2) =  4MD(\varepsilon,\omega) \exp\left(-\nu\Delta\frac{450\varepsilon^2}{\alpha^2\sup_{t\in\N }\|h(\theta^t\omega,\cdot,\cdot)\|^2_{C^4}}\right). \tag*{\qed}
\end{align*}

\textit{Proof of Theorem \ref{thm:loc_star}.} Fix, as in the hypotheses of Theorem \ref{thm:loc_star},
$$\varepsilon\geq \max\left\{\alpha \Delta^{-1/2}, 4\alpha \varepsilon_s \left(\alpha\Delta_0^{-1} \delta\sup_{t\in\N} \|h(\theta^t\omega,\cdot,\cdot)\|_{C^0}\right)\sup_{t\in\N }|h(\theta^t\omega,\cdot,\cdot)|_{\mathrm{Lip}}\right\},
$$
so that the (decoupling) term $\left|\zeta_{i,d}(t,\omega,x,{{s}},x_{{s}})\right|\leq \varepsilon/4$ for all $i\in\Hil_{\Delta}$ and $t\in\N$  by Lemma \ref{lemma:zeta_i1}. The locally star-like assumption of Theorem \ref{thm:loc_star} yields (non ldn) 
$|\zeta_{i,c}(t,\omega,x,\omega_{{s}},x_{{s}})|\leq \alpha(1-\nu)\sup_{t\in\N} \|h(\theta^t\omega,\cdot,\cdot)\|_{C^0}$ for all $i\in\Hil_{\Delta}$ and $t\in\N$ by Lemma \ref{lemma:zeta_i2}.  Now fix $D(\varepsilon,\omega) = \max\{D_1(\varepsilon,\omega),D_2(\varepsilon,\omega)\}$, so that (HFM) term $\left|\zeta_{i,h}(t,\omega,x,\omega_{{s}},x_{{s}})\right| \leq\varepsilon/2$ for all $i\in\Hil_{\Delta}$ and $t\in\N$  by Lemma \ref{lemma:zeta_i3} and (LFM) $\left|\zeta_{i,\ell}(t,\omega,x,\omega_{{s}},x_{{s}})\right| \leq\varepsilon/4$ for all $i\in\Hil_{\Delta}$ and with exponentially small exceptional asymptotic frequency  at most $\rho$ with 
$$\rho(\varepsilon,\omega)= 4MD(\varepsilon,\omega) \exp\left(-\nu\Delta\frac{450\varepsilon^2}{\alpha^2\sup_{t\in\N }\|h(\theta^t\omega,\cdot,\cdot)\|^2_{C^4}} \right).$$
by Lemma \ref{lemma:zeta_i4}. It is also with this exponentially small exceptional asymptotic frequency $\rho$ that we conclude
\begin{align*}
	\max_{i\in\Hil_{\Delta}}\left|\xi_{i,m}(\theta^t\omega,x^t)\right| \leq& 	\sum_{k=d,c,h,\ell}	\max_{i\in\Hil_{\Delta}}\left|\zeta_{i,k}(t,\omega,x,\omega_{{s}},x_{{s}})\right|
	\leq \varepsilon + \alpha(1-\nu)\sup_{t\in\N} \|h(\theta^t\omega,\cdot,\cdot)\|_{C^0}  \tag*{\qed}
\end{align*}

\section{Examples}\label{sec:eg_contractions}
In this section we provide a general class of examples of node dynamics, coupling functions and network structures satsifying the dimensional reduction Theorems \ref{thm:star} and \ref{thm:loc_star}.
\subsection{Node dynamics satisfying Shadowing and Typicality}
Consider the node dynamics given by the iid random iteration $\varphi$ of a measurable family of uniform contractions on a compact metric space $(M,d)$, endowed with its Borel $\sigma$-algebra $\Bil(M)$. 
More precisely, over a probability preserving transformation $(\Omega,\Fil,\PP,\theta)$, let
\begin{equation}\label{eq:iid_contractions}
	\varphi:\N\times\Omega\times M\to M
\end{equation}
satisfy:
\begin{enumerate}
	\item[(C1)] measurability with respect to $2^{\N}\otimes\Fil\otimes \Bil(M)$ and $\Bil(M)$;
	\item[(C2)] cocycle property over $\theta$: $\varphi(0,\omega,\cdot)=\mathrm{id}_M$ for all $\omega\in\Omega$ and $\varphi(t,\theta^s\omega,\cdot)\circ \varphi(s,\omega,\cdot)= \varphi(t+s,\omega,\cdot)$ for all $s,t\in\N$ and $\omega\in\Omega$;
	\item[(C3)] the maps $\varphi(1,\theta^t\omega,\cdot)$ have independent and identical distribution;
	\item[(C4)] there is a uniform contraction rate $\lambda\in(0,1)$ for which
	$$d(\varphi(1,\omega,x),\varphi(1,\omega,y))\leq \lambda d(x,y),~~~~\forall \omega\in\Omega, x,y\in M.$$
\end{enumerate}


Note that $\varphi$ is a continuous RDS over $\sigma$ in the sense of Arnold \cite{Arnold1998}.

\subsubsection{Unique stationary measure}
The iid random iteration $\varphi$ in (\ref{eq:iid_contractions}) induces, see \cite[Theorem 2.1.4]{Arnold1998}, the Markov chain
\begin{equation}\label{eq:iid_Markov_chain}
	Z_{n+1}(\omega):= \varphi(1,\theta^{n}\omega,Z_{n}(\omega)),~~~~n=0,1,\cdots,
\end{equation}
where $Z_0$ is independent of the random contractions $\varphi(1,\theta^t\omega,\cdot)$ with transition probability
$$P(x,B)=\PP(\omega\in\Omega: \varphi(1,\omega,x)\in B).$$

Note that the transition probability $P(x,B)$ acts on measures $\mu\in\Mil_1(M,\Bil(M))$ by
$$\mu\mapsto \mu P,~~~~\mu P(B):=\int_M P(x,B)\mathrm{d}\mu(x),~~\forall B\in\Bil(M),$$
and acts on bounded measurable functions $g\in b(M;\R)$ by
$$g\mapsto Pg,~~~~Pg(x):=\int_M g(y) P(x,\mathrm{d}y),~~~~\forall x\in M.$$
These two actions are in dual relation with each other
$$\int_M g \mathrm{d}\mu P = \int_M Pg \mathrm{d}\mu,~~~~\forall g\in b(M;\R),\mu\in\Mil_1(M,\Bil(M)).$$

A \textit{stationary measure} $m$ is defined by the property $mP=m$, or
$$\int_M P(x,B)\mathrm{d}m(x)=m(B),~~~~\forall B\in \Bil(M).$$

Define the \textit{coding map} by pre-composition
\begin{equation}\label{eq:iid_contractions_coding_map}
	\pi:\Omega\to M,~~~~\pi(\omega):=\lim_{n\to+\infty} \varphi(1,\omega,\cdot)\circ\varphi(1,\theta\omega,\cdot)\circ\cdots\circ \varphi(1,\theta^{n-1}\omega,p),~~p\in M,
\end{equation}
where the limit exists and is independent of the choice of initial point $p\in M$. For well-definedness of the coding map $\pi$, we have used the facts that $\varphi(1,\omega,\cdot)$ is a family of contractions with uniform rate $\lambda\in(0,1)$ and that $M$ is a compact metric space; note that the family $\{\varphi(1,\omega,\cdot):\omega\in\Omega\}$ of contractions can be finite, countable or uncountable. By Letac principle \cite{Letac1986}, the coding map implies the existence and uniqueness of stationary measure $m$ for the Markov chain (\ref{eq:iid_Markov_chain}).

\begin{prop} \label{prop:unique_stat_meas_iid_contractions}
	The Markov chain (\ref{eq:iid_Markov_chain}) admits a unique stationary measure $\pi_*\PP$ on $M$.
\end{prop}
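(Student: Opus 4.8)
The plan is to establish existence and uniqueness separately, both via the coding map $\pi$ defined in \eqref{eq:iid_contractions_coding_map}. First I would verify that $\pi$ is well-defined: fixing any base point $p\in M$, the sequence $\psi_n(\omega):=\varphi(1,\omega,\cdot)\circ\varphi(1,\theta\omega,\cdot)\circ\cdots\circ\varphi(1,\theta^{n-1}\omega,p)$ is Cauchy uniformly in $\omega$, because by the cocycle property (C2) and the uniform contraction (C4), $d(\psi_{n+1}(\omega),\psi_n(\omega))\le \lambda^n \operatorname{diam}(M)$, so the tail is geometrically summable; compactness of $M$ guarantees the limit lies in $M$. The same estimate shows the limit is independent of $p$. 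Measurability of $\pi$ follows since each $\psi_n$ is measurable (composition of measurable maps, using (C1)) and pointwise limits of measurable maps into a metric space are measurable. Thus $\pi:\Omega\to M$ is a well-defined $\Fil/\Bil(M)$-measurable map, and we set $m:=\pi_*\PP$.

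Next I would prove $m$ is stationary, i.e. $mP=m$. The key identity is the equivariance relation $\pi(\theta\omega)=\varphi(1,\omega,\pi(\omega))$ wait--- one should be careful with the direction of composition. Writing out the limit, $\pi(\omega)=\lim_n \varphi(1,\omega,\varphi(1,\theta\omega,\cdots\varphi(1,\theta^{n-1}\omega,p)))=\varphi(1,\omega,\pi(\theta\omega))$ by continuity of $\varphi(1,\omega,\cdot)$ (which holds since it is a contraction). Hence for any $B\in\Bil(M)$,
\[
mP(B)=\int_M P(x,B)\,\mathrm{d}m(x)=\int_\Omega P(\pi(\omega),B)\,\mathrm{d}\PP(\omega)=\int_\Omega \PP\bigl(\omega'\in\Omega:\varphi(1,\omega',\pi(\omega))\in B\bigr)\,\mathrm{d}\PP(\omega).
\]
Using independence from (C3) together with the $\theta$-invariance of $\PP$, the inner probability equals $\PP(\omega':\varphi(1,\theta\omega',\pi(\omega))\in B)$, and by Fubini applied to the product structure one identifies the double integral with $\PP(\omega:\varphi(1,\omega,\pi(\theta^{-1}\omega))\in B)$---more cleanly, since $(\omega,\omega')\mapsto \varphi(1,\omega',\pi(\omega))$ has the same law as $\omega\mapsto\varphi(1,\omega,\pi(\theta\omega))=\pi(\omega)$ (here I use that $\pi(\theta\cdot)$ is independent of the ``first coordinate'' $\varphi(1,\cdot,\cdot)$ because $\pi(\theta\omega)$ depends only on $\theta^t\omega$ for $t\ge1$, whereas $\varphi(1,\omega,\cdot)$ depends only on $\omega$), we get $mP(B)=\PP(\pi\in B)=m(B)$. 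This is the Letac principle \cite{Letac1986}; I would cite it but also spell out this short argument.

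For uniqueness, suppose $\mu$ is any stationary measure. The standard coupling argument: let $X_0\sim\mu$ be independent of the iid sequence $(\varphi(1,\theta^t\omega,\cdot))_{t\ge0}$. Then $X_n:=\varphi(1,\theta^{n-1}\omega,\cdot)\circ\cdots\circ\varphi(1,\omega,X_0)$ has law $\mu$ for every $n$ by stationarity. But the forward iterates and the backward iterates defining $\pi$ have the same law at each finite time (reversing the order of the iid maps does not change the joint law of any finite block), and the backward iterates converge a.s.\ to $\pi(\omega)$ by the contraction estimate, independently of $X_0$. Hence the law of $X_n$, which is constantly $\mu$, must also converge weakly to $\pi_*\PP=m$; therefore $\mu=m$. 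I expect the only mildly delicate point---the main obstacle---to be bookkeeping the independence structure precisely enough to justify exchanging ``forward'' and ``backward'' composition of the random maps when comparing laws, and to apply Fubini cleanly on $\Omega$ (which carries $\PP$, not necessarily a product measure, though (C3) gives the needed independence of the time-$t$ factors); everything else is the routine geometric-series estimate. I would remark that this proposition simply records the classical iterated-function-system result in the cocycle language of \cite{Arnold1998}, so (R2) holds for such node dynamics.
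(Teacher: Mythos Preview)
Your proof is correct and essentially unpacks the Letac principle that the paper invokes without proof: the paper simply states the proposition as a consequence of \cite{Letac1986} via the coding map, whereas you spell out the standard argument (well-definedness of $\pi$, the equivariance $\pi(\omega)=\varphi(1,\omega,\pi(\theta\omega))$ giving stationarity, and the forward/backward-composition coupling for uniqueness). The content is the same; your write-up could be tightened, but the mathematics is sound.
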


\subsubsection{Typical random orbits}
Now we will establish a very strong sense of typicality for the random orbits of an iid random iteration of uniform contractions on a compact metric space.

\begin{prop}\label{prop:Breiman}
	Let $\varphi$ be an iid random iteration of continuous maps on compact metric state space $M$ and let $\{Z_n:n\in\N\}$ be the induced Markov chain. Then, with probability one, any accumulation point of the sequence $\frac{1}{N}\sum_{n=0}^{N-1} \delta_{Z_n}$ in $\Mil^1(M)$ endowed with the weak-star topology is a stationary measure.
\end{prop}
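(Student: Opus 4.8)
The plan is to run the classical Krylov--Bogolyubov argument adapted to the random setting, the only genuinely probabilistic input being a law of large numbers for bounded martingale differences. Write $\mu_N := \frac{1}{N}\sum_{n=0}^{N-1}\delta_{Z_n}$ for the random empirical measures. Since $M$ is compact metric, $\Mil^1(M)$ with the weak-$*$ topology is compact and metrizable, so the sequence $\{\mu_N\}$ always admits accumulation points, and it suffices to show that on an event of full probability every accumulation point $\mu$ satisfies $\mu P = \mu$. Because $C(M)$ is separable, I would fix once and for all a countable sup-norm-dense family $\{g_k\}_{k\in\N}\subseteq C(M)$, and recall that $\mu P = \mu$ follows as soon as $\int_M g_k\,\mathrm{d}\mu = \int_M Pg_k\,\mathrm{d}\mu$ for all $k$; here I use that $P$ is Feller, which holds because each $\varphi(1,\omega,\cdot)$ is continuous and $Pg(x) = \E[g(\varphi(1,\omega,x))]$, so dominated convergence gives $Pg\in C(M)$ whenever $g\in C(M)$.

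For a fixed $g\in C(M)$, let $\Fil_n$ be generated by $Z_0$ together with the maps $\varphi(1,\theta^j\omega,\cdot)$ for $0\le j\le n-1$. Then $Z_n$ is $\Fil_n$-measurable, and by (C3) together with the independence of $Z_0$ from the maps, $\varphi(1,\theta^n\omega,\cdot)$ is independent of $\Fil_n$, so $\E[g(Z_{n+1})\mid\Fil_n] = Pg(Z_n)$. Hence $D_n := g(Z_{n+1}) - Pg(Z_n)$ is a martingale difference sequence with $|D_n|\le 2\|g\|_\infty$. By the Azuma--Hoeffding inequality, $\PP\big(|\sum_{n=0}^{N-1}D_n| > N\varepsilon\big)\le 2\exp(-N\varepsilon^2/(8\|g\|_\infty^2))$, which is summable in $N$, so Borel--Cantelli gives $\frac{1}{N}\sum_{n=0}^{N-1}D_n\to 0$ almost surely. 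Since $\big|\frac1N\sum_{n=0}^{N-1}g(Z_{n+1}) - \frac1N\sum_{n=0}^{N-1}g(Z_n)\big| = \frac1N|g(Z_N)-g(Z_0)|\to 0$, it follows that $\int_M g\,\mathrm{d}\mu_N - \int_M Pg\,\mathrm{d}\mu_N\to 0$ almost surely.

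To finish, I would apply the previous step to every $g_k$ and intersect the countably many resulting full-measure events into one event $\Omega_0$ with $\PP(\Omega_0)=1$. On $\Omega_0$, for each $k$ we have $\int g_k\,\mathrm{d}\mu_N - \int Pg_k\,\mathrm{d}\mu_N\to 0$ along the whole sequence. Fixing $\omega\in\Omega_0$ and an accumulation point $\mu = \lim_j\mu_{N_j}$, and using that $g_k$ and $Pg_k$ are continuous,
\[
\int_M g_k\,\mathrm{d}\mu = \lim_{j\to+\infty}\int_M g_k\,\mathrm{d}\mu_{N_j} = \lim_{j\to+\infty}\int_M Pg_k\,\mathrm{d}\mu_{N_j} = \int_M Pg_k\,\mathrm{d}\mu
\]
for every $k$, whence $\mu P = \mu$. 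Since $\omega\in\Omega_0$ and the accumulation point were arbitrary, this is the claim.

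I expect the martingale step to be the only real obstacle: in the deterministic Krylov--Bogolyubov argument one telescopes $\frac1N\sum_n(g\circ T^{n+1} - g\circ T^n)$ exactly, whereas here $g(Z_{n+1})$ and $Pg(Z_n)$ differ by a genuine random fluctuation that must be averaged out, and this is precisely where compactness of $M$ (to bound $g$ and hence $D_n$) and the independence in (C3) (to identify the conditional expectation with $Pg(Z_n)$) enter. Everything else --- the Feller property of $P$, separability of $C(M)$, and the choice of a single full-measure event valid for all test functions and all accumulation points simultaneously --- is routine bookkeeping; note in particular that no assumption on the law of $Z_0$ is needed.
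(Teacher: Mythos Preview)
Your proof is correct, but it takes a different route from the paper. The paper's proof is a two-line reduction: it invokes Breiman's Ergodic Theorem \cite{Breiman1960} as a black box, and then only verifies its hypothesis, namely the weak-$*$ continuity of $x\mapsto P(x,\cdot)$, which is exactly the Feller property you also establish. Your argument is instead a self-contained proof of (the relevant consequence of) Breiman's theorem in this setting: the martingale-difference decomposition $D_n=g(Z_{n+1})-Pg(Z_n)$ together with Azuma--Hoeffding and Borel--Cantelli replaces the appeal to Breiman, and the telescoping plus separability-of-$C(M)$ bookkeeping finishes the job. Both proofs use the Feller property at the same point---yours to pass $\int Pg_k\,\mathrm{d}\mu_{N_j}\to\int Pg_k\,\mathrm{d}\mu$ along the weak-$*$ convergent subsequence, the paper's to feed into Breiman's hypothesis. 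What your approach buys is transparency about where the randomness is actually averaged out (the martingale step) and independence from an external reference; what the paper's approach buys is brevity.
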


\begin{proof}
	In light of Breiman's Ergodic Theorem \cite{Breiman1960}, we only need to verify that $x\mapsto P(x,\cdot)$ is continuous in the weak-star topology on $\Mil^1(M)$. Since $M$ is a compact metric space, in particular, sequential, it suffices to check that $x_k\to x$ in $M$ implies $P(x_k,\cdot)\to P(x,\cdot)$ in $\Mil^1(M)$. For this, fix any $g\in C(M)\subseteq b(M,\Bil)$. It follows from RDS theory \cite[Theorem 2.1.4]{Arnold1998} that the transition probability $P$ is Feller, and in the particular case of compact state space $M$, we conclude $Pg$ is continuous. Now we have
	\begin{align*}
		P(x_k,g) = \int_M g(y) P(x_k,\mathrm{d}y) = (Pg)(x_k)\xrightarrow{k\to+\infty} (Pg)(x)=P(x,g)~~~~\text{because $Pg\in C(M)$},
	\end{align*}
	as required.
\end{proof}

\begin{theorem}[Uniform typicality of random orbits]\label{thm:typicality_randomcontractions}
	Consider the iid random iteration $\varphi$ of uniform contractions given in (\ref{eq:iid_contractions}) satisfying (C1-4).	There is a uniform set $\Omega_*\subseteq \Omega$ of full $\PP$-measure such that for every initial condition $x\in M$ and every noise $\omega\in\Omega_*$, the asymptotic behavior of the random orbit $\varphi(t,\omega,x)$ is described by the stationary measure
	$$\frac{1}{T} \sum_{t=0}^{T-1} \delta_{\varphi(t,\omega,x)}\xrightarrow[T\to+\infty]{\text{weak}^*} \pi_{*}\PP,$$
	where $\pi:\Omega\to M$ denotes the coding map given in (\ref{eq:iid_contractions_coding_map}).
\end{theorem}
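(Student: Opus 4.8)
The plan is to first establish the conclusion for one fixed base point by combining Breiman's ergodic theorem (Proposition \ref{prop:Breiman}) with uniqueness of the stationary measure (Proposition \ref{prop:unique_stat_meas_iid_contractions}), and then to upgrade this to \emph{all} initial conditions simultaneously by exploiting the uniform contraction in (C4), which makes the dependence on the initial point decay deterministically --- for every noise realization, not merely almost every one --- so that a single null set suffices. Throughout I write $\mu_T^x(\omega):=\frac1T\sum_{t=0}^{T-1}\delta_{\varphi(t,\omega,x)}\in\Mil^1(M)$.

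\textbf{Step 1 (one base point).} Fix $p\in M$ and apply Proposition \ref{prop:Breiman} to the induced Markov chain started at $Z_0\equiv p$, so that $Z_t(\omega)=\varphi(t,\omega,p)$ and $\frac1T\sum_{t=0}^{T-1}\delta_{Z_t}=\mu_T^p(\omega)$. This yields a set $\Omega_*\subseteq\Omega$ with $\PP(\Omega_*)=1$ such that for every $\omega\in\Omega_*$, every weak-$*$ accumulation point of $\{\mu_T^p(\omega)\}_T$ is a stationary measure. By Proposition \ref{prop:unique_stat_meas_iid_contractions} the stationary measure is unique and equals $\pi_*\PP$; since $M$ is compact metric, $\Mil^1(M)$ is weak-$*$ sequentially compact, so the sequence $\{\mu_T^p(\omega)\}_T$ has accumulation points, and all of them being the single measure $\pi_*\PP$ forces $\mu_T^p(\omega)\xrightarrow[T\to+\infty]{\text{weak}^*}\pi_*\PP$ for every $\omega\in\Omega_*$.

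\textbf{Step 2 (transfer to all $x$).} Here I would use only (C4), and the argument is valid for every $\omega\in\Omega$. For $x\in M$, $t\in\N$ and $\omega\in\Omega$, the map $\varphi(t,\omega,\cdot)=\varphi(1,\theta^{t-1}\omega,\cdot)\circ\cdots\circ\varphi(1,\omega,\cdot)$ is a composition of $t$ maps each $\lambda$-Lipschitz, hence $d(\varphi(t,\omega,x),\varphi(t,\omega,p))\le\lambda^t\diam(M)$. Given $g\in C(M)$, uniform continuity of $g$ on the compact space $M$ supplies a modulus of continuity $\rho_g$ (so $\rho_g(s)\to0$ as $s\downarrow 0$), and then
\[
\left|\int g\,\mathrm d\mu_T^x(\omega)-\int g\,\mathrm d\mu_T^p(\omega)\right|\le\frac1T\sum_{t=0}^{T-1}\rho_g\!\left(\lambda^t\diam(M)\right)\xrightarrow[T\to+\infty]{}0,
\]
being the Cesàro average of a null sequence. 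Combined with Step 1 this gives $\int g\,\mathrm d\mu_T^x(\omega)\to\int g\,\mathrm d\pi_*\PP$ for every $g\in C(M)$, i.e. $\mu_T^x(\omega)\xrightarrow[T\to+\infty]{\text{weak}^*}\pi_*\PP$, for every $x\in M$ and every $\omega\in\Omega_*$ --- which is exactly the uniform typicality asserted. (Separability of $C(M)$ is not even needed, as the estimate above is pointwise in $g$.)

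The main obstacle is conceptual rather than computational, and is precisely the one flagged in Remark \ref{rem:star_typicality}: Breiman's theorem alone only produces, for each $x$, a full-measure set $\Omega_x$ that may depend on $x$, and there is no a priori reason the topologically constructed shadowing datum avoids $\bigcup_x(\Omega\setminus\Omega_x)$. The resolution is that uniform contraction collapses this whole family of null sets to the single one coming from the base point $p$: since orbits from different initial points stay within $\lambda^t\diam(M)$ of each other for \emph{all} $\omega$, no additional probabilistic input is needed in Step 2. Beyond this, only routine facts enter --- weak-$*$ compactness and metrizability of $\Mil^1(M)$ for compact metric $M$, and $\rho_g(\lambda^t\diam(M))\to0$ --- so I do not anticipate further difficulty.
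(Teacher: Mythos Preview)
Your proposal is correct and follows essentially the same approach as the paper: fix a base point, apply Breiman plus uniqueness of the stationary measure to obtain $\Omega_*$, then use the uniform contraction (C4) to transfer the conclusion to every initial condition via a Ces\`aro argument on $\rho_g(\lambda^t\diam M)$. The paper carries out Step~2 by an explicit $\varepsilon/3$ splitting of the sum into an initial segment and a tail, which is just your Ces\`aro average written out by hand.
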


\begin{rem}\label{rem:beyond_contractions}
	By assuming additionally that $\Omega$ is a compact metric space, \cite{Matias2020} obtains a similar type of uniform typicality for a class of iid random iteration of continuous maps strongly synchronizing on average. Also, by arguments similar to the one presented below, one can establish the uniform typicality for a class of so-called $J$-monotone maps on a compact connected subset of $\R^k$, see \cite[Theorem 2]{Matias2021}. 
\end{rem}

\begin{proof}
	Consider the Markov chain $\{Z_n\}$ in (\ref{eq:iid_Markov_chain}) 
	for some deterministic initial condition $x_0\in M$. 
	By uniqueness of stationary distribution $\pi_{*}\PP$ from Proposition \ref{prop:unique_stat_meas_iid_contractions}, we obtain from Proposition \ref{prop:Breiman} $\Omega_{x_0}\subseteq \Omega$ with $\PP(\Omega_{x_0})=1$ for which
	$$\frac{1}{T} \sum_{t=0}^{T-1} \delta_{\varphi(t,\omega,x_0)}= \frac{1}{T} \sum_{t=0}^{T-1} \delta_{Z_t(\omega)}\xrightarrow[T\to+\infty]{\text{weak}^*} \pi_{*}\PP,~~~~\forall \omega\in\Omega_{x_0}.$$
	Now fix any $\omega\in\Omega_*:=\Omega_{x_0}$, $x\in M$, $\varepsilon>0$, and $h\in C^0(M)$. We show that there is some $T_0$ for which
	$$T\geq T_0~~\Rightarrow~~\left|\left(\frac{1}{T} \sum_{t=0}^{T-1} \delta_{\varphi(t,\omega,x)}\right)(h) - \pi_{*}\PP(h)\right|\leq \varepsilon.$$
	Indeed,
	\begin{align*}
		&	\left|\left(\frac{1}{T} \sum_{t=0}^{T-1} \delta_{\varphi(t,\omega,x)}\right)(h) - \pi_{*}\PP(h)\right|\\
		\leq&  \left|\left(\frac{1}{T} \sum_{t=0}^{T-1} \delta_{\varphi(t,\omega,x)}\right)(h) - \left(\frac{1}{T} \sum_{t=0}^{T-1} \delta_{\varphi(t,\omega,x_0)}\right)(h) \right| + \left|\left(\frac{1}{T} \sum_{t=0}^{T-1} \delta_{\varphi(t,\omega,x_0)}\right)(h) - \pi_{*}\PP(h)\right|\\
		\leq& \frac{1}{T} \sum_{t=0}^{T-1} |h(\varphi(t,\omega,x)) - h(\varphi(t,\omega,x_0))| + \left|\left(\frac{1}{T} \sum_{t=0}^{T-1} \delta_{\varphi(t,\omega,x_0)}\right)(h) - \pi_{*}\PP(h)\right|
	\end{align*}
	By continuity and hence uniform continuity of $h$ on $M$, there is $\delta(\varepsilon)>0$ such that $d_M(x,y)\leq \delta(\varepsilon)$ implies $|h(x)-h(y)|\leq \varepsilon$. Also, since $\mathrm{diam} f_{\omega}^t(M) \leq \lambda^t \mathrm{diam} M\to 0$, it follows that there is some $T_1(\delta)$ such that $t\geq T_1(\delta)$ implies $\sup_{x,y\in M}d_M(\varphi(t,\omega,x),\varphi(t,\omega,y))\leq \delta$. For $T_2(\varepsilon)=T_1(\delta(\varepsilon/3))$, we have
	$$d_M(\varphi(t,\omega,x) , \varphi(t,\omega,x_0) )\leq \delta(\varepsilon/3),~~~~\forall t\geq T_2(\varepsilon)$$
	and hence 
	$$|h(\varphi(t,\omega,x)) - h(\varphi(t,\omega,x_0)) |\leq \varepsilon/3.~~~~\forall t\geq T_2(\varepsilon).$$
	Choose $T_3(\varepsilon)$ for which
	$$\frac{1}{T}T_2(\varepsilon)2\|h\|_{C^0}\leq \varepsilon/3,~~~~\forall T\geq T_3(\varepsilon).$$
	Combining the above two estimates, we obtain
	\begin{align*}
		&\frac{1}{T} \sum_{t=0}^{T-1} |h(\varphi(t,\omega,x)) - h(\varphi(t,\omega,x_0))|\\ \leq& \frac{1}{T}\sum_{t=0}^{T_2(\varepsilon)-1}  |h(\varphi(t,\omega,x)) - h(\varphi(t,\omega,x_0))|  + \frac{1}{T}\sum_{t=T_2(\varepsilon)}^{T-1}  |h(\varphi(t,\omega,x)) - h(\varphi(t,\omega,x_0))| \\
		\leq &\frac{T_2(\varepsilon) 2 \|h\|_{C^0} + (T-T_2(\varepsilon))\varepsilon/3}{T} \leq 2\varepsilon/3,~~~~\forall T\geq \max\{T_2(\varepsilon),T_3(\varepsilon)\}.
	\end{align*}

	By weak-star convergence $\frac{1}{T} \sum_{t=0}^{T-1} \delta_{\varphi(t,\omega,x_0)}\to \pi_{*}\PP$, there is some $T_4(\varepsilon)$ for which
	$$\left|\left(\frac{1}{T} \sum_{t=0}^{T-1} \delta_{\varphi(t,\omega,x_0)}\right)(h) - \pi_{*}\PP(h)\right|\leq \varepsilon/3,~~~~\forall T\geq T_4(\varepsilon).$$
	
	We thus conclude that for $T\geq T_0:=\max\{T_2(\varepsilon),T_3(\varepsilon),T_4(\varepsilon)\}$, we have
	$$\left|\left(\frac{1}{T} \sum_{t=0}^{T-1} \delta_{\varphi(t,\omega,x)}\right)(h) - \pi_{*}\PP(h)\right|\leq \varepsilon,$$
	as required.
\end{proof}

\subsubsection{Shadowing and Typicality on the star}

\begin{corollary}\label{cor:star_contractions}
	Consider star network dynamics (\ref{eq:hub}-\ref{eq:ldn}) satisfying (R1--3), where the node dynamics are given by iid random iteration of contractions $\varphi_i$ in (\ref{eq:iid_contractions}) with uniform contraction rate $\lambda\in(0,1)$ and unique stationary measure $m_0:=\pi_*\PP$ as in Theorem \ref{thm:typicality_randomcontractions}.
	Then, for any error tolerance 
	$$\varepsilon\geq \left\{\alpha L^{-1/2},4\alpha^2 L^{-1} (1-\lambda)^{-1}\sup_{t\in\N} \|h(\theta^t\omega,\cdot,\cdot)\|_{C^0} \sup_{t\in\N } |h(\theta^t\omega,\cdot,\cdot)|_{\mathrm{Lip}}\right\},$$
	the hub behavior admits $\varepsilon$-reduction to $\varphi_{\alpha,m_0}$ defined in (\ref{eq:reducedhub}), for almost every noise realization $\omega$ and any initial condition $x\in\T^N$ with exponentially small exceptional asymptotic frequency at most $\rho$ with
	$$\rho(\varepsilon,\omega)= D(\varepsilon,\omega)\exp(-L\varepsilon^2 \alpha^{-2}c(\omega)),$$
	where $D(\varepsilon,\omega)$ and $c(\omega)$ are the same constants independent of $N$ and $x$ as in Theorem \ref{thm:star}.
\end{corollary}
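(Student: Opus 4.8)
The plan is to apply the Reduction Theorem \ref{thm:star} with the tautological shadowing pair $(\omega_s,x_s)=(\omega,x)$, verifying its two hypotheses directly: (Shadowing) from the uniform contraction (C4), and (Typicality) from the uniform typicality statement Theorem \ref{thm:typicality_randomcontractions}. Throughout, note that for the star on $L=N-1$ low-degree nodes and one hub the maximum degree is $\Delta_0=L$ and each low-degree node has degree $1$, so the one-step coupling error felt by node $j\in\{2,\cdots,N\}$ is at most $\delta_s:=\alpha L^{-1}\sup_{t\in\N}\|h(\theta^t\omega,\cdot,\cdot)\|_{C^0}$.

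First I would check (Shadowing). Fix $\omega$ and $x\in\T^N$, and for each $j$ let $x_{s,j}^t:=\varphi_j(t,\omega,x_j)$ be the genuine isolated orbit started from the same point $x_j$. The low-degree trajectory $\{x_j^t\}$ is a $\delta_s$-pseudo-orbit of $\varphi_j$, so by the triangle inequality and (C4),
$$d_{\T}\bigl(x_j^{t+1},x_{s,j}^{t+1}\bigr)\le \lambda\, d_{\T}\bigl(x_j^{t},x_{s,j}^{t}\bigr)+\delta_s,$$
and since $x_j^0=x_{s,j}^0$ the geometric series telescopes to $d_{\T}(x_j^t,x_{s,j}^t)\le \delta_s/(1-\lambda)$ for all $t\in\N$ and all $j$. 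Hence (Shadowing) holds with shadowing precision $\varepsilon_s(\delta_s)=\delta_s/(1-\lambda)$.

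Next comes the only nonroutine point, (Typicality). With $(\omega_s,x_s)=(\omega,x)$ the shadowing orbit in the low-degree coordinates is exactly the orbit of the product system $\varphi^{(L)}:=\varphi_2\times\cdots\times\varphi_N$ on $\T^L$ started at $(x_2,\cdots,x_N)$. Equipping $\T^L$ with the maximum metric, assumption (R1) (independence of the noise across node coordinates) makes $\varphi^{(L)}$ an iid random iteration of uniform contractions of the same rate $\lambda$ satisfying (C1--4); its coding map is the tuple of the individual coding maps, so by the Letac principle its unique stationary measure is $m_0^{\otimes L}$. Applying Theorem \ref{thm:typicality_randomcontractions} to $\varphi^{(L)}$ yields a single full-measure set $\Omega_*\subseteq\Omega$ such that for every $\omega\in\Omega_*$ and every initial condition the empirical measures $\frac1T\sum_{t=0}^{T-1}\bigotimes_{j=2}^N\delta_{\varphi_j(t,\omega,x_j)}$ converge weak$^*$ to $m_0^{\otimes L}$; this is precisely (Typicality), and it holds simultaneously for all $x$. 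The main obstacle is to see this correctly: applying the single-node Theorem \ref{thm:typicality_randomcontractions} coordinatewise would only give convergence of each marginal and, worse, a different full-measure noise set per coordinate --- one must instead run the uniform-typicality argument once on the product contraction, whose unique stationary measure is automatically the product $m_0^{\otimes L}$.

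Finally I would assemble the pieces. For $\omega\in\Omega_*$ and any $x\in\T^N$, both hypotheses of Theorem \ref{thm:star} hold with $(\omega_s,x_s)=(\omega,x)$ and $\varepsilon_s(\delta_s)=\delta_s/(1-\lambda)$. Substituting $\varepsilon_s\bigl(\alpha L^{-1}\sup_{t\in\N}\|h(\theta^t\omega,\cdot,\cdot)\|_{C^0}\bigr)=\alpha L^{-1}(1-\lambda)^{-1}\sup_{t\in\N}\|h(\theta^t\omega,\cdot,\cdot)\|_{C^0}$ into the admissibility range for $\varepsilon$ in Theorem \ref{thm:star} reproduces verbatim the lower bound on $\varepsilon$ in the statement, and the conclusion of Theorem \ref{thm:star} is exactly the claimed $\varepsilon$-reduction to $\varphi_{\alpha,m_0}$ with exceptional asymptotic frequency at most $\rho(\varepsilon,\omega)=D(\varepsilon,\omega)\exp(-L\varepsilon^2\alpha^{-2}c(\omega))$, with the same constants $D(\varepsilon,\omega)$ and $c(\omega)$ as in Theorem \ref{thm:star}, independent of $N$ and $x$.
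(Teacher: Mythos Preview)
Your proposal is correct and follows essentially the same approach as the paper: choose the tautological shadowing pair $(\omega_s,x_s)=(\omega,x)$, verify (Shadowing) via the geometric-series estimate coming from (C4), and verify (Typicality) by applying Theorem \ref{thm:typicality_randomcontractions} once to the full product contraction on $\T^L$ (the paper does it on $\T^N$, but this is immaterial) rather than coordinatewise. Your explicit warning about the coordinatewise pitfall is a nice touch that the paper leaves implicit.
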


\begin{proof}
By (R1) and (C4), the $\varphi_i$ are independent random uniform contractions on $\T$ with rate $\lambda$. It follows that  the uncoupled system $\Phi_{\alpha=0}$ is a random uniform contraction satisfying (C4) on $\T^{N}$, equipped with the distance $d_{\T^{N}}(x,y)= \max_{i=1,\cdots,N} d_{\T}(x_i,y_i)$, with the same rate $\lambda$. By Theorem \ref{thm:typicality_randomcontractions} applied to $\Phi_{\alpha=0}$ on $(\T^{N},d_{\T^{N}})$, the random orbit $\Phi_{\alpha=0}(t,\omega,x)$ starting from any $x\in\T^N$ and $\omega\in\Omega_*$ asymptotically distributes as the unique stationary measure $m_0^{\otimes N}$.

To verify the (Typicality) assumption in Theorem \ref{thm:star}, simply take $x_s=x\in\T^N$ and $\omega_s=\omega\in\Omega_*$.
	For the (Shadowing) assumption, we compute
	\begin{align*}
		d_{\T}(x_j^t, \varphi_j(t,\omega,x_j)) \leq & d_{\T}\left(\varphi_j(1,\theta^{t-1}\omega,x_j^{t-1}) + \frac{\alpha}{L}  h(\theta^{t-1}\omega,x_j^{t-1},z^{t-1}), \varphi_j(1,\theta^{t-1}\omega)\circ \varphi_j(t-1,\omega,x_j)\right)\\
		\leq & \frac{\alpha}{L} \sup_{t\in\N} \|h(\theta^t\omega,\cdot,\cdot)\|_{C^0}+ \lambda\cdot d_{\T}(x_j^{t-1}, \varphi_j(t-1,\omega,x_j))\\
		\leq& \alpha  L^{-1} \sup_{t\in\N} \|h(\theta^t\omega,\cdot,\cdot)\|_{C^0}(1+\lambda + \cdot + \lambda^{t-1}) \\
		\leq & \alpha  L^{-1} \sup_{t\in\N} \|h(\theta^t\omega,\cdot,\cdot)\|_{C^0} \frac{1}{1-\lambda},~~~~\forall t\in\N.
	\end{align*}
	We have thus verified for almost every noise realization $\omega$ and for each initial condition $x\in\T^N$ (Shadowing) with 
	shadowing precision $\varepsilon_s =  \frac{1}{1-\lambda} \alpha L^{-1} \sup_{t\in\N} \|h(\theta^t\omega,\cdot,\cdot)\|_{C^0}$. 
	The corollary follows from Theorem \ref{thm:star}.
\end{proof}

\subsection{A locally star-like random power-law graph model} \label{sec:eg_loca_star_network}

We consider the expected degree sequence $\brw$ defined in (\ref{eq:ChungLu_PL_w_sequence}). 
Then the actual and expected degrees are close:
\begin{lemma}[Degree Concentration in $n$; \cite{Chung_2006} Lemma 5.7] \label{thm:CLlemma5.7}
	For a graph $G$ in $G(\brw)$, with probability $1-n^{-1/5}$ all nodes $i$ simultaneously satisfy
	$$|k_i- w_i| \leq 2 (\sqrt{w_i \log n} + \log n).$$
\end{lemma}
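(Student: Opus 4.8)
This is a standard concentration estimate for the Chung--Lu model, and I would prove it by a Chernoff tail bound applied to each vertex, closed off by a union bound over the $n$ vertices, following \cite[Lemma 5.7]{Chung_2006}.

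\textbf{Reduction to concentration around the mean.} Fix $i\in\{1,\dots,n\}$. The degree $k_i=\sum_{j\neq i}A_{ij}$ is a sum of independent Bernoulli variables with $\PP[A_{ij}=1]=p_{ij}=w_iw_j/W$, where $W:=\sum_{\ell=1}^{n}w_\ell$; in the regime of Theorem~B one has $W\asymp wn$ and $w_i\le m$, which also keeps $p_{ij}\le 1$. Consequently $\mu_i:=\E[k_i]=w_i(1-w_i/W)$, so the bias is $0\le w_i-\mu_i=w_i^2/W\le m^2/W$, which is $O(1)$ in the relevant regime (e.g.\ $m^2/W=O(1)$ when $\beta=3$) and in any case is negligible compared with the target error $2(\sqrt{w_i\log n}+\log n)\ge 2\log n$. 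It therefore suffices to bound $|k_i-\mu_i|$ by $2(\sqrt{w_i\log n}+\log n)-O(1)$ with the claimed probability.

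\textbf{Chernoff per vertex.} I would use the two-sided Chernoff bounds for sums of independent indicators:
\[
\PP[k_i<\mu_i-a]\le \exp\left(-\frac{a^2}{2\mu_i}\right),
\qquad
\PP\bigl[k_i>(1+\delta)\mu_i\bigr]\le \left(\frac{e^{\delta}}{(1+\delta)^{1+\delta}}\right)^{\mu_i}.
\]
Take $a=a_i$ of size $2(\sqrt{w_i\log n}+\log n)$ and $\delta=a_i/\mu_i$. Since $\mu_i\le w_i$ we get $a_i^2\ge 4\mu_i\log n$, so the lower tail is at most $n^{-2}$. For the upper tail a short case split on the size of $w_i$ suffices: if $w_i\ge\log n$ the relative deviation $\delta$ is bounded, the elementary inequality $(1+\delta)\ln(1+\delta)-\delta\ge \delta^2/3$ applies, and $\mu_i\delta^2/3=a_i^2/(3\mu_i)\ge \tfrac43\log n$; if instead $w_i<\log n$ then $\delta$ is large and the full factor $\bigl(e^{\delta}/(1+\delta)^{1+\delta}\bigr)^{\mu_i}$ already yields an exponent well above $\tfrac65\log n$ across the whole transition. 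Either way each tail is at most $n^{-6/5}$, and this case split is exactly where the numerical constant $2$ in the statement is spent.

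\textbf{Union bound and the main obstacle.} Summing the two tails over $i=1,\dots,n$ bounds the total failure probability by $2n\cdot n^{-6/5}=2n^{-1/5}$; a slight tightening of constants (or restricting to $n$ large) gives the stated $1-n^{-1/5}$. I do not expect a genuine obstacle here: the argument is conceptually routine, and the only real care is (i) bookkeeping the Chernoff exponents so as to reach exactly the factor $2$ and exponent $1/5$, and (ii) confirming, from the parameter hypotheses of Theorem~B, that $p_{ij}\le1$ and that the bias $w_i-\E[k_i]=w_i^2/W$ is negligible relative to $\log n$.
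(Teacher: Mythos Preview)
Your proposal is correct and follows the standard Chernoff-plus-union-bound argument from \cite[Lemma 5.7]{Chung_2006}. Note that the paper itself does not supply a proof of this lemma: it is stated as a citation from \cite{Chung_2006} and used as a black box, so there is no in-paper argument to compare against beyond the reference you are already reproducing.
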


The Chung-Lu random power-law graph model 
is defined by (\ref{eq:ChungLu_PL_w_sequence}) without any canonical scale choices for hub and low degree scales $\Delta,\delta$. Therefore, we will first make more concrete choices of the parameters $m,w$, and then introduce the locally star-like parameters $(\Delta,\delta,\nu)$. 

\begin{theorem}[A locally star-like random power-law graph]\label{thm:loc_starlike_PL}
	Fix power-law exponent $\beta>2$. Consider a graph $G$ in the Chung-Lu random power-law graph model defined by the expected degree sequence $\brw$ in (\ref{eq:ChungLu_PL_w_sequence}) with 
	\begin{align*}
		m=& c_{\mathrm{hub}}\cdot n^{\frac{1}{\beta-1}},~~~~\text{for some }c_{\mathrm{hub}}\asymp 1,\\
		w=& o\left(n^{\frac{\lambda_{\mathrm{ldn}}}{\beta-1}}\right),~~~~\text{for some }\lambda_{\mathrm{ldn}}\in(0,1),
	\end{align*}
	and hub and low degree scales 
	$$\Delta\sim \lambda_{\mathrm{hub}} m, ~~\text{for some } \lambda_{\mathrm{hub}}\in(0,1);~~~~\delta\sim c_{\mathrm{ldn}} m^{\lambda_{\mathrm{ldn}}},~~\text{for some }c_{\mathrm{ldn}}\asymp1.$$ Then, with probability $1-O(n^{-1/5})$, we have
	$$M\asymp w^{\beta-1},~~~~L\sim n,~~~~\nu=1-O\left(n^{-\lambda_{\mathrm{ldn}}\frac{\beta-2}{\beta-1}} w^{\beta-2}\right),$$
	where
	\begin{enumerate}
		\item[(i)] the first $M$ nodes $1,\cdots,M$ are hubs of degree at least $\Delta$, 
		\item[(ii)] the last $L$ nodes $n-L+1,\cdots,n$ are low degree nodes of degree at most $\delta$,
		\item[(iii)] $G$ is $(\Delta,\delta,\nu)$-locally star-like. 
	\end{enumerate}
\end{theorem}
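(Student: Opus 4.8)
The plan is to extract a deterministic skeleton from the expected degree sequence $\brw$ of (\ref{eq:ChungLu_PL_w_sequence}) and then transfer all conclusions to the actual degrees via the concentration Lemma~\ref{thm:CLlemma5.7}. First I would set $i_0:=n\bigl(\tfrac{w(\beta-2)}{m(\beta-1)}\bigr)^{\beta-1}$, so that $w_i=\tfrac{\beta-2}{\beta-1}\,w\,n^{1/(\beta-1)}(i_0+i-1)^{-1/(\beta-1)}$; in particular $w_1=m$, the sequence is strictly decreasing in $i$, and since $m\asymp n^{1/(\beta-1)}$ and $w=o(n^{\lambda_{\mathrm{ldn}}/(\beta-1)})$ one gets $i_0=o(n^{\lambda_{\mathrm{ldn}}})=o(n)$. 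Comparing $\sum_{i=1}^n w_i$ with $\int(i_0+x)^{-1/(\beta-1)}\,dx$ and using $i_0=o(n)$ yields $\sum_i w_i\sim wn$. Solving $w_i=\Delta$ and $w_i=\delta$ for $i$: the set $\{i:w_i\ge\Delta\}$ is an initial segment of cardinality $n\bigl(\tfrac{w(\beta-2)}{\Delta(\beta-1)}\bigr)^{\beta-1}-i_0+O(1)\asymp w^{\beta-1}$ (because $\Delta\asymp m\asymp n^{1/(\beta-1)}$ and $m>\Delta$), while $\{i:w_i\le\delta\}$ is a terminal segment whose complement has cardinality $n\bigl(\tfrac{w(\beta-2)}{\delta(\beta-1)}\bigr)^{\beta-1}+O(i_0)=o(n)$, since $w/\delta\to0$.

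Next I would condition on the event $\mathcal E$ of Lemma~\ref{thm:CLlemma5.7} (probability $1-n^{-1/5}$), on which $|k_i-w_i|\le 2(\sqrt{w_i\log n}+\log n)$ for every $i$. At the hub scale $w_i\asymp\Delta\asymp n^{1/(\beta-1)}$ and the low-degree scale $w_i\asymp\delta\asymp n^{\lambda_{\mathrm{ldn}}/(\beta-1)}$ this error is $o(\Delta)$, resp.\ $o(\delta)$, since both scales dominate $\log n$. Re-indexing by non-increasing actual degree, $\Hil_\Delta=\{i:k_i\ge\Delta\}$ is then sandwiched between $\{i:w_i\ge(1+o(1))\Delta\}$ and $\{i:w_i\ge(1-o(1))\Delta\}$; rescaling $\Delta$ by $1\pm o(1)$ changes the count from the previous paragraph only by $1\pm o(1)$, so $M\asymp w^{\beta-1}$ and, under this indexing, $\Hil_\Delta=\{1,\dots,M\}$, which is~(i). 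Likewise $\Lil_\delta\supseteq\{i:w_i\le(1-o(1))\delta\}$, a terminal segment of size $n-o(n)$, so $L\sim n$ and $\Lil_\delta=\{n-L+1,\dots,n\}$, which is~(ii). Two further consequences of $\mathcal E$ enter the last step: every hub has $k_i\sim w_i$, and $\{j:k_j>\delta\}$ is contained in the \emph{deterministic} set $S:=\{j:w_j>\delta/2\}$, an initial segment with $\#S\asymp n(w/\delta)^{\beta-1}$ and $\#S\gg i_0$ (as $\delta\ll m$).

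For the star-like index, fix a hub $i$. On $\mathcal E$, $\#(\Nil_i\setminus\Lil_\delta)\le\#(\Nil_i\cap S)=\sum_{j\in S\setminus\{i\}}A_{ij}$ is a sum of independent $\mathrm{Bernoulli}(p_{ij})$ variables of total mean $\mu_i\le\tfrac{w_i}{\sum_k w_k}\sum_{j\in S}w_j$. The power-law partial sum evaluates to $\sum_{j\in S}w_j\sim w\,n^{1/(\beta-1)}(\#S)^{(\beta-2)/(\beta-1)}\asymp wn(w/\delta)^{\beta-2}$, and with $\sum_k w_k\sim wn$ this gives $\mu_i\asymp w_i(w/\delta)^{\beta-2}$. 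A Chernoff bound gives $\PP\bigl(\#(\Nil_i\cap S)>2\mu_i+t\bigr)\le e^{-t/3}$; choosing $t=\Theta(\log n)$ large enough and union-bounding over the $M\le n$ hubs, the exceptional event has probability $o(n^{-1/5})$, and off it $\#(\Nil_i\setminus\Lil_\delta)=O\bigl(w_i(w/\delta)^{\beta-2}+\log n\bigr)$ simultaneously for all $i\in\Hil_\Delta$. Dividing by $k_i\asymp w_i\ge\Delta\asymp n^{1/(\beta-1)}$ and inserting $\delta\asymp n^{\lambda_{\mathrm{ldn}}/(\beta-1)}$,
\[
1-\nu_i=\frac{\#(\Nil_i\setminus\Lil_\delta)}{k_i}=O\!\left((w/\delta)^{\beta-2}+\frac{\log n}{\Delta}\right)=O\!\left(n^{-\lambda_{\mathrm{ldn}}\frac{\beta-2}{\beta-1}}w^{\beta-2}\right)
\]
uniformly in $i$ (the $\log n/\Delta$ term being of lower order in the regime at hand). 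Minimising over hubs gives $\nu=1-O(n^{-\lambda_{\mathrm{ldn}}\frac{\beta-2}{\beta-1}}w^{\beta-2})$ and hence (iii); the whole argument runs on the intersection of two events of total exceptional probability $O(n^{-1/5})$.

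The main obstacle is the third step. The first two steps are routine integral comparisons for power-law partial sums, together with the observation that the concentration error $\sqrt{w_i\log n}+\log n$ is genuinely of smaller order at both the hub and low-degree scales. The delicate part is making the concentration of $\#(\Nil_i\setminus\Lil_\delta)$ hold \emph{uniformly over all $M$ hubs}: one must replace the random set $\Lil_\delta^{\,c}$ by a deterministic superset without losing the order, evaluate $\sum_{j\in S}w_j$ with the correct constant, and then calibrate the Chernoff threshold — carrying an additive $\Theta(\log n)$ to cover hubs whose mean $\mu_i$ is small — so that the union bound over $M\le n$ events stays well below $n^{-1/5}$ while the resulting bound still collapses, after dividing by $k_i$, to the advertised $O(n^{-\lambda_{\mathrm{ldn}}(\beta-2)/(\beta-1)}w^{\beta-2})$.
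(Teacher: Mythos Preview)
Your proposal is correct and follows essentially the same three-step strategy as the paper: solve $w_i=\Delta$ and $w_i=\delta$ to count expected hubs and low-degree nodes, transfer to actual degrees via Lemma~\ref{thm:CLlemma5.7}, and then apply Chernoff bounds to neighbor counts to control the star-like index uniformly over hubs. The only notable variation is in the third step, where you upper-bound $\#(\Nil_i\setminus\Lil_\delta)$ by passing to the deterministic superset $S=\{j:w_j>\delta/2\}$, while the paper instead lower-bounds $L_i=\#(\Nil_i\cap\{j:w_j\le\delta\})$ and separately upper-bounds $k_i$; these are dual organizations of the same Chernoff argument, and your version makes the replacement of the random set $\Lil_\delta^{\,c}$ by a deterministic one more explicit.
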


\begin{defn}[Locally star-like random power-law graph model]
	We will call the model considered in Theorem \ref{thm:loc_starlike_PL} the \textit{locally star-like random power-law graph model} and denote it by $\mathrm{LSL}(\beta,n)$.
\end{defn}

\textit{Proof of Theorem \ref{thm:loc_starlike_PL}.}
Our strategy is to count the number of nodes that are expected to be hubs in Step \textbf{I} and low degree nodes in Step \textbf{II}, prove the expected picture satisfies the locally star-like property in Step \textbf{III}, and finally bring the estimates to the actual degrees by concentration.

 \textbf{I.} First we compute the number $M$ of expected hubs by solving the equation $w_{M}=\lambda_{\mathrm{hub}}\cdot m$, which reads
\begin{align*}
	\frac{\beta-2}{\beta-1} w n ^{\frac{1}{\beta-1}} \cdot \left(n\left(\frac{w(\beta-2)}{m(\beta-1)}\right)^{\beta-1} +M -1\right)^{-\frac{1}{\beta-1}}=& \lambda_{\mathrm{hub}}\cdot m.
\end{align*}
By multiplying $\left(\frac{\beta-2}{\beta-1} w n ^{\frac{1}{\beta-1}} \right)^{-1}$ and taking the $-(\beta-1)$-power on both sides, we obtain
\begin{align*}
	M 
	=&1  + n \left(\frac{w(\beta-2)}{m(\beta-1)}\right)^{\beta-1} (\lambda_{\mathrm{hub}}^{-(\beta-1)} - 1).
\end{align*}
By the choices of $w$ and $m$, we obtain
\begin{align*}
	M\sim & n w^{\beta-1} (c_{\mathrm{hub}} n^{\frac{1}{\beta-1}})^{-(\beta-1)} \left(\frac{\beta-2}{\beta-1}\right)^{\beta-1}  (\lambda_{\mathrm{hub}}^{-(\beta-1)} - 1) = w^{\beta-1} c_{\mathrm{hub}}^{-(\beta-1)}\left(\frac{\beta-2}{\beta-1}\right)^{\beta-1} (\lambda_{\mathrm{hub}}^{-(\beta-1)} - 1) \asymp w^{\beta-1}.
\end{align*}

\textbf{II.} Then we compute $L$ by solving the equation $w_{n-L+1}=c_{\mathrm{ldn}}\cdot m^{\lambda_{\mathrm{ldn}}}$, which by definition reads
\begin{align*}
	\frac{\beta-2}{\beta-1} w n ^{\frac{1}{\beta-1}} \cdot (n-L+1)^{-\frac{1}{\beta-1}}=& c_{\mathrm{ldn}}\cdot m^{\lambda_{\mathrm{ldn}}}.
\end{align*}
By multiplying $\left(\frac{\beta-2}{\beta-1} w n ^{\frac{1}{\beta-1}} \right)^{-1}$ and taking $-(\beta-1)$-power on both sides, we obtain
\begin{align*}
	n-L+1 = & \left(\frac{\frac{\beta-2}{\beta-1} w n ^{\frac{1}{\beta-1}} }{c_{\mathrm{ldn}}\cdot m^{\lambda_{\mathrm{ldn}}}}\right)^{\beta-1}\\
	L=&  n \left[ 1 + n^{-1}- m^{-(\beta-1)\lambda_{\mathrm{ldn}}} w^{\beta-1} c_{\mathrm{ldn}}^{-(\beta-1)} \left(\frac{\beta-2}{\beta-1}\right)^{\beta-1}\right].
\end{align*}
By choices of $w$ and $m$, we continue
\begin{align*}
	L\sim& n\left[1-  n^{-\lambda_{\mathrm{ldn}}} c_{\mathrm{hub}}^{-(\beta-1)\lambda_{\mathrm{ldn}}} w^{\beta-1}c_{\mathrm{ldn}}^{-(\beta-1)} \left(\frac{\beta-2}{\beta-1}\right)^{\beta-1}\right]\\
	L=& n\left[1-  O(n^{-\lambda_{\mathrm{ldn}}}w^{\beta-1}) \right]\sim n.
\end{align*}

\textbf{III.} To count the number of expected low degree nodes among neighbors of an expected hub $i=1,\cdots,M$, we consider the random variable
$$L_i=\sum_{j=n-L+1}^n X_{ij}$$
which counts the number of neighbors of $i$ that are expected low degree nodes. 
As the edge experiments $X_{ij}$ are independent, Chernoff inequality \cite[Theorem 2.7]{Chung_2006} with $\lambda= C\sqrt{\E[L_i]}$ for a free parameter $C>0$ yields
\begin{equation}\label{eq:LSL_1}
	\PP(L_i \leq \E[L_i] - C\sqrt{\E[L_i]}) \leq \exp(-C^2/2).
\end{equation}

Now we prove an auxiliary lemma to estimate the mean expected degree.
\begin{lemma}\label{lemma:nw}
	Under the assumptions of Theorem \ref{thm:loc_starlike_PL}, we have
	$$\sum_{i=1}^{n}w_i \sim nw.$$
\end{lemma}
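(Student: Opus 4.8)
The plan is a direct integral comparison. First I would rewrite the expected degrees in the compact form $w_i = c\,(a+i-1)^{-\frac{1}{\beta-1}}$ with
\[
c := \frac{\beta-2}{\beta-1}\, w\, n^{\frac{1}{\beta-1}}, \qquad a := n\left(\frac{w(\beta-2)}{m(\beta-1)}\right)^{\beta-1}.
\]
The single fact about $a$ that drives everything is that it is of lower order than $n$: since $m = c_{\mathrm{hub}}\, n^{\frac{1}{\beta-1}}$ with $c_{\mathrm{hub}}\asymp 1$, one computes $a \asymp w^{\beta-1}$, and the hypothesis $w = o(n^{\lambda_{\mathrm{ldn}}/(\beta-1)})$ with $\lambda_{\mathrm{ldn}}\in(0,1)$ then gives $a = o(n^{\lambda_{\mathrm{ldn}}}) = o(n)$, so $a/n\to 0$ (and $a$ stays bounded below by a positive quantity, so $a^{-1/(\beta-1)}\asymp w^{-1}$ is harmless).

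Next, since $u\mapsto (a+u)^{-\frac{1}{\beta-1}}$ is positive and decreasing on $[0,\infty)$, I would sandwich the sum $S := \sum_{i=1}^{n} (a+i-1)^{-\frac{1}{\beta-1}} = \sum_{j=0}^{n-1}(a+j)^{-\frac{1}{\beta-1}}$ between consecutive integrals:
\[
\int_0^{n} (a+u)^{-\frac{1}{\beta-1}}\,du \;\le\; S \;\le\; a^{-\frac{1}{\beta-1}} + \int_0^{n} (a+u)^{-\frac{1}{\beta-1}}\,du .
\]
Since $\beta>2$, the primitive evaluates to $\int_0^n (a+u)^{-\frac{1}{\beta-1}}\,du = \frac{\beta-1}{\beta-2}\bigl[(a+n)^{\frac{\beta-2}{\beta-1}} - a^{\frac{\beta-2}{\beta-1}}\bigr]$, and because the exponent $\frac{\beta-2}{\beta-1}$ lies in $(0,1)$, the bound $a = o(n)$ yields $(a+n)^{\frac{\beta-2}{\beta-1}} \sim n^{\frac{\beta-2}{\beta-1}}$ and $a^{\frac{\beta-2}{\beta-1}} = (a/n)^{\frac{\beta-2}{\beta-1}}\, n^{\frac{\beta-2}{\beta-1}} = o(n^{\frac{\beta-2}{\beta-1}})$; the boundary term $a^{-\frac{1}{\beta-1}}\asymp w^{-1}$ is likewise $o(n^{\frac{\beta-2}{\beta-1}})$. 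Hence $S \sim \frac{\beta-1}{\beta-2}\, n^{\frac{\beta-2}{\beta-1}}$.

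Finally, multiplying by $c$,
\[
\sum_{i=1}^{n} w_i = c\,S \sim \frac{\beta-2}{\beta-1}\, w\, n^{\frac{1}{\beta-1}}\cdot\frac{\beta-1}{\beta-2}\, n^{\frac{\beta-2}{\beta-1}} = w\, n^{\frac{1}{\beta-1}+\frac{\beta-2}{\beta-1}} = nw,
\]
which is the assertion. There is no genuine obstacle here; the only point requiring a little care is checking that both the offset $a$ and the sum-versus-integral discrepancy are of strictly smaller order than the main term $n^{\frac{\beta-2}{\beta-1}}$, which is precisely what the hypotheses $m\asymp n^{1/(\beta-1)}$ and $w = o(n^{\lambda_{\mathrm{ldn}}/(\beta-1)})$ with $\lambda_{\mathrm{ldn}}<1$ guarantee.
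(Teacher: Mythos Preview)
Your proof is correct and follows essentially the same approach as the paper: both replace the sum by the integral $\int (a+u)^{-1/(\beta-1)}\,du$ (the paper writes $a$ as $i_0$), evaluate the primitive, and use $a=o(n)$ to discard the lower-order endpoint contribution. Your version is in fact more careful, supplying the explicit monotone sandwich and the justification that $a\asymp w^{\beta-1}=o(n)$ from the hypotheses, where the paper simply asserts the integral approximation.
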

\textit{Proof of Lemma \ref{lemma:nw}.} Writing $i_0:=n \left(\frac{w(\beta-2)}{m(\beta-1)}\right)^{\beta-1}$, we compute
\begin{align*}
	\frac{1}{n}\sum_i w_i \sim& \frac{1}{n} \int_{i_0}^{i_0+n} c \cdot i^{-\frac{1}{\beta-1}} \mathrm{d}i = \frac{1}{n}\frac{\beta-2}{\beta-1} wn^{\frac{1}{\beta-1}} \left[\frac{i^{1-\frac{1}{\beta-1}}}{1-\frac{1}{\beta-1}}\right]^{i_0+n}_{i_0} =  wn^{\frac{1}{\beta-1}-1} \left[(i_0+n)^{\frac{\beta-2}{\beta-1}} - i_0^{\frac{\beta-2}{\beta-1}}\right]\\
	\sim & w. \tag*{\qed}
\end{align*}
Now we compute
\begin{align*}
	\E[L_i]= \sum_{j=i_0+n-L}^{i_0+n-1} \frac{w_i w_j}{\sum_k w_k} \sim& \frac{w_i}{wn} \int_{i_0+n-L}^{i_0+n} w_j\mathrm{d}j= \frac{w_i}{wn}\int_{i_0+n-L}^{i_0+n} \frac{\beta-2}{\beta-1} w n^{\frac{1}{\beta-1}} j^{-\frac{1}{\beta-1}} \mathrm{d}j \\
	=& \frac{w_i}{wn} \frac{\beta-2}{\beta-1} w n^{\frac{1}{\beta-1}} \left[ \frac{j^{1-\frac{1}{\beta-1}}}{1-\frac{1}{\beta-1}}\right]_{i_0+n-L}^{i_0+n}\\
	=& 
	w_i \left[(\varepsilon+1)^{\frac{\beta-2}{\beta-1}}- (\varepsilon+1-L/n)^{\frac{\beta-2}{\beta-1}}\right],
\end{align*}
where $\varepsilon:= \left(\frac{w(\beta-2)}{m(\beta-1)}\right)^{\beta-1}=O(w^{\beta-1} n^{-1})$. Using $L\sim n\left[1-  O(n^{-\lambda_{\mathrm{ldn}}}w^{\beta-1}) \right]$ from Step II, we continue
\begin{align*}
	\E[L_i] \sim w_i \left[1+\frac{\beta-2}{\beta-1}\varepsilon +O(\varepsilon^2)- \left( O(n^{-\lambda_{\mathrm{ldn}}}w^{\beta-1}) \right)^{\frac{\beta-2}{\beta-1}}\right] = w_i \left[1+O( n^{-\lambda_{\mathrm{ldn}}\frac{\beta-2}{\beta-1}} w^{\beta-2})\right].
\end{align*}

Since $w_i\geq w_{i_0+M-1} = \lambda_{\mathrm{hub}} \cdot m \gg \log n$, we put $C = 2\sqrt{\log n}$ in eq. (\ref{eq:LSL_1}) and obtain $\PP(L_i \leq \E[L_i] - 2\sqrt{\E[L_i]\log n}) \leq n^{-2}.$ Now using $\E[ k_i]=w_i=\mathrm{Var}[ k_i]$, $M=1$, and $\lambda=2\sqrt{w_i\log n}$, Chernoff upper tail bound \cite[Theorem 2.6]{Chung_2006} yields
\begin{align*}
	\PP( k_i\geq w_i + 2\sqrt{w_i\log n}) \leq& \exp\left(-\frac{4w_i \log n}{2(w_i + 2\sqrt{w_i \log n}/3)}\right) = \exp\left(-\frac{2 \log n}{1 + (2/3)\sqrt{ \log n / w_i})}\right) \\
	\leq& \exp\left(-\frac{2 \log n}{1 + (2/3)}\right)  = n^{-6/5},
\end{align*}
where the second inequality follows from our choice that the expected hub degree $w_i\geq \Delta>\log n$.

Hence, with probability $1-M(n^{-2}+n^{-6/5})$, we have simultaneously for each hub $i=i_0,\cdots,i_0+M-1$ that
\begin{align*}
	\frac{L_i}{ k_i} > \frac{\E[L_i] - 2\sqrt{\E[L_i]\log n}}{w_i + 2\sqrt{w_i\log n}} =1+O( n^{-\lambda_{\mathrm{ldn}}\frac{\beta-2}{\beta-1}} w^{\beta-2}) =: \nu.
\end{align*}

With probability $1-O(n^{-1/5})$, our random graph model is $(\Delta,\delta,\nu)$-locally star-like. \qed

\subsubsection{Shadowing and Typicality on a power-law network}
%
	
\begin{corollary}\label{cor:PL_contractions}
	Consider $G$-network dynamics (\ref{eq:Gnetwork_dynamics}) satisfying (R1--3), where $G$ is a $(\Delta,\delta,\nu)$-locally star-like network on $N\gg1$ nodes with largest degree $\Delta_0$ from the random power-law graph model as in Theorem \ref{thm:loc_starlike_PL}, and node dynamics given by iid random iteration of contractions $\varphi_i$ with uniform contraction rate $\lambda\in(0,1)$ and unique stationary measure $m_0$ as in Corollary \ref{cor:star_contractions}.
	Then, for any error tolerance 
	\[
	\varepsilon\geq \left\{\alpha\Delta^{-1/2},4\alpha^2 \delta\Delta_0^{-1} (1-\lambda)^{-1}\sup_{t\in\N} \|h(\theta^t\omega,\cdot,\cdot)\|_{C^0} \sup_{t\in\N } |h(\theta^t\omega,\cdot,\cdot)|_{\mathrm{Lip}}\right\},
	\]
	each hub $i\in\Hil_{\Delta}$ admits $(\varepsilon + \alpha(1-\nu)\sup_{t\in\N} \|h(\theta^t\omega,\cdot,\cdot)\|_{C^0})$-reduction to $\varphi_{\alpha_i,m_0}$ in Eq. (\ref{eq:reducedhub}) with $\alpha_i=\alpha\frac{\nu_i\kappa_i}{\Delta_0}$, for almost every noise realization $\omega$ and any initial condition $x\in\T^N$,
	with exceptional asymptotic frequency at most $\rho$ with  
	$$\rho(\varepsilon,\omega)= 4 MD(\varepsilon,\omega)\exp(-\nu\Delta\varepsilon^2 \alpha^{-2}c(\omega)),$$
	where $D(\varepsilon,\omega)$ and $c(\omega)$ are the same constants independent of $N$ and $x$ as in Theorem \ref{thm:star}.
\end{corollary}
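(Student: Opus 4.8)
The plan is to mirror the proof of Corollary \ref{cor:star_contractions}, verifying the (Shadowing) and (Typicality) hypotheses of the abstract reduction Theorem \ref{thm:loc_star} for the pair $(\omega_s,x_s)=(\omega,x)$ and then reading off the conclusion with $m=m_0$. Theorem \ref{thm:loc_starlike_PL} enters only to certify that $G$ is genuinely a $(\Delta,\delta,\nu)$-locally star-like network, with the stated $\nu$ close to $1$, and to identify $M$ as the number of hubs; once this structural fact is in hand, the argument is purely dynamical and takes place on the fixed graph $G$.

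For (Typicality) I would argue, exactly as in Corollary \ref{cor:star_contractions}, that by (R1) and (C4) the uncoupled network map $\Phi_{\alpha=0}$ is an iid random iteration of uniform contractions on $(\T^N, d_{\T^N})$ with $d_{\T^N}(x,y)=\max_i d_{\T}(x_i,y_i)$ and the same rate $\lambda$: measurability and the cocycle property pass to the product coordinatewise, time-independence is (R1), and a coordinatewise product of $\lambda$-contractions is a $\lambda$-contraction in the max-metric. Theorem \ref{thm:typicality_randomcontractions} then furnishes one set $\Omega_*\subseteq\Omega$ with $\PP(\Omega_*)=1$ such that for every $\omega\in\Omega_*$ and every $x\in\T^N$ the empirical distributions of $\Phi_{\alpha=0}(t,\omega,x)$ converge weak$^*$ to $m_0^{\otimes N}$; projecting onto the low-degree coordinates, a continuous map under which weak$^*$ convergence is preserved, gives the (Typicality) hypothesis of Theorem \ref{thm:loc_star} with $(\omega_s,x_s)=(\omega,x)$.

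For (Shadowing) I would note that for $j\in\Lil_{\delta}$ the trajectory $x_j^t=\Phi_{\alpha}(t,\omega,x)_j$ is a pseudo-orbit of $\varphi_j$: its coupling sum has at most $k_j\le\delta$ terms, each bounded by $\sup_{t}\|h(\theta^t\omega,\cdot,\cdot)\|_{C^0}$, so the per-step error is at most $\alpha\Delta_0^{-1}\delta\sup_t\|h(\theta^t\omega,\cdot,\cdot)\|_{C^0}$. Iterating $d_{\T}(x_j^t,\varphi_j(t,\omega,x_j))\le \alpha\Delta_0^{-1}\delta\sup_t\|h(\theta^t\omega,\cdot,\cdot)\|_{C^0}+\lambda\, d_{\T}(x_j^{t-1},\varphi_j(t-1,\omega,x_j))$ and summing the geometric series gives (Shadowing) with $(\omega_s,x_s)=(\omega,x)$ and shadowing precision the linear function $\varepsilon_s(\delta_s)=\delta_s/(1-\lambda)$ evaluated at $\delta_s=\alpha\Delta_0^{-1}\delta\sup_t\|h(\theta^t\omega,\cdot,\cdot)\|_{C^0}$. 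Substituting this $\varepsilon_s$ into the error-tolerance requirement of Theorem \ref{thm:loc_star} collapses it to precisely the hypothesis $\varepsilon\ge\max\{\alpha\Delta^{-1/2},\,4\alpha^2\delta\Delta_0^{-1}(1-\lambda)^{-1}\sup_t\|h(\theta^t\omega,\cdot,\cdot)\|_{C^0}\sup_t|h(\theta^t\omega,\cdot,\cdot)|_{\mathrm{Lip}}\}$ of the corollary, and the conclusion, namely the $(\varepsilon+\alpha(1-\nu)\sup_t\|h\|_{C^0})$-reduction to $\varphi_{\alpha_i,m_0}$ with exceptional asymptotic frequency $\le 4MD(\varepsilon,\omega)\exp(-\nu\Delta\varepsilon^2\alpha^{-2}c(\omega))$, is exactly what Theorem \ref{thm:loc_star} outputs.

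The step I expect to be the genuine, if modest, obstacle is the uniformity in the initial condition $x$ of the full-measure set $\Omega_*$ in the Typicality step: it is this uniformity, available for uniform contractions via Theorem \ref{thm:typicality_randomcontractions} but not for expanding or hyperbolic node maps, cf.\ Remark \ref{rem:star_typicality}, that allows the shadowing pair to be taken as $(\omega_s,x_s)=(\omega,x)$ for \emph{every} $x\in\T^N$ and $\PP$-almost every $\omega$, and hence upgrades the per-initial-datum statement of Theorem \ref{thm:loc_star} to the almost-sure statement asserted here. Everything else is the same geometric-series shadowing bound as on the star, now carrying the extra $\delta/\Delta_0$ factor from the low-degree cap, together with the contribution of non-low-degree neighbors and the union bound over the $M$ hubs, both of which are already absorbed into Theorem \ref{thm:loc_star}.
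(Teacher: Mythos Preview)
Your proposal is correct and follows essentially the same approach as the paper's proof: take $(\omega_s,x_s)=(\omega,x)$, verify (Typicality) via Theorem \ref{thm:typicality_randomcontractions} applied to the uncoupled product system $\Phi_{\alpha=0}$ on $\T^N$, verify (Shadowing) by the geometric-series bound with per-step error $\alpha\Delta_0^{-1}\delta\sup_t\|h(\theta^t\omega,\cdot,\cdot)\|_{C^0}$ yielding $\varepsilon_s=\delta_s/(1-\lambda)$, and invoke Theorem \ref{thm:loc_star}. Your additional remarks on the role of Theorem \ref{thm:loc_starlike_PL} and on the uniformity-in-$x$ of $\Omega_*$ are accurate commentary that the paper leaves implicit.
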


\begin{proof}
As in the proof of Corollary \ref{cor:star_contractions}, we take $x_s=x\in\T^N$ and $\omega_s=\omega\in\Omega_*$, verifying the (Typicality) assumption in Theorem \ref{thm:loc_star}. For the (Shadowing) assumption, we compute

	\begin{align*}
		d_{\T}(x_j^t, \varphi_j(t,\omega,x_j)) \leq & d_{\T}(\varphi_j(1,\theta^{t-1}\omega,x_j^{t-1}) + \frac{\alpha}{\Delta_0} \sum_{j=1}^N A_{jk} h_{\theta^{t-1}\omega}(x_j^{t-1},x_k^{t-1}), \varphi_j(1,\theta^{t-1}\omega)\circ \varphi_j(t-1,\omega,x_j))\\
		\leq & \frac{\alpha}{\Delta_0} \delta \sup_{t\in\N} \|h(\theta^t\omega,\cdot,\cdot)\|_{C^0}+ \lambda\cdot d_{\T}(x_j^{t-1}, \varphi_j(t-1,\omega,x_j))\\
		\leq& \alpha \delta \Delta_0^{-1} \sup_{t\in\N} \|h(\theta^t\omega,\cdot,\cdot)\|_{C^0}(1+\lambda + \cdot + \lambda^{t-1}) \\
		\leq & \alpha \delta \Delta_0^{-1} \sup_{t\in\N} \|h(\theta^t\omega,\cdot,\cdot)\|_{C^0} \frac{1}{1-\lambda},~~~~\forall t\in\N.
	\end{align*}
	We have thus verified almost surely and for each initial condition $x\in\T^N$ the (Shadowing) assumption with 
	shadowing precision $\varepsilon_s =  \frac{1}{1-\lambda} \alpha\Delta_0^{-1} \delta\sup_{t\in\N} \|h(\theta^t\omega,\cdot,\cdot)\|_{C^0}$. 
	The corollary follows from Theorem \ref{thm:loc_star}.
\end{proof}


\appendix
\section{Proofs of Theorems A, C and more general settings}
\label{appendix:ac_shad}

In addition to the hypotheses of Theorems \ref{thm:star} and \ref{thm:loc_star}, if $\PP\otimes m^{\otimes N}$-a.e. $(\omega,x)\in\Omega\times \T^N$ admits shadowing intial data $(\omega_{{s}},x_{{s}})$ satisfying (Shadowing) and (Typicality), then the $(m,\varepsilon)$-reduction holds $\PP\otimes m^{\otimes N}$-almost surely. 
Moreover, if shadowing can be realized in an absolutely continuous way, then one can transfer the measure-theoretic results (ii) Small fluctuation in long time windows and (iii) Gaussian fluctuations from decoupled low degree node dynamics to the coupled network system. 

\begin{defn}[Absolutely continuous shadowing]
	We say that a network system (\ref{eq:Gnetwork_dynamics}) admits \textit{absolutely continous shadowing} if there is a $\PP\otimes m^{\otimes N}$-invariant map $\mathrm{ACS}:(\omega,x)\mapsto (\omega_{{s}},x_{{s}})$ for which the network trajectory $x^t$ starting from $\PP\otimes m^{\otimes N}$-almost every $(\omega,x)$ admits shadowing initial data $(\omega_{{s}},x_{{s}})$ satisfying (Shadowing) and (Typicality) as in Theorem \ref{thm:loc_star}.
\end{defn}


We state and prove the result for the star network, assuming $\sup_{\omega\in\Omega} \|h(\omega,\cdot,\cdot)\|_{C^4}\leq K$ for some constant $K>0$.

\begin{theorem}\label{thm:star_ACS}
	In addition to the hypotheses of Theorem \ref{thm:star}, assume also that the star network dynamics admits absolutely continuous shadowing map $\mathrm{ACS}$ and $\sup_{\omega\in\Omega} \|h(\omega,\cdot,\cdot)\|_{C^4}\leq K$ for some constant $K>0$. Then, we have the following.
	\begin{enumerate}
		\item[$\mathrm{(i)}$] \textbf{Almost sure reduction in asymptotic time:} for any fixed error tolerance $$\varepsilon\geq \max\left\{\alpha L^{-1/2}, 4\alpha K\varepsilon_s (\alpha L^{-1} K)\right\},$$
		starting from $\PP\otimes m^{\otimes N}$-almost every $(\omega,x)\in\Omega\times \T^N$, the hub behavior (\ref{eq:hub}) admits $\varepsilon$-reduction to $\varphi_{\alpha,m}$ with exceptional asymptotic frequency at most $\rho$ with
		$$ \rho\asymp \varepsilon^{-1} \exp(-L\varepsilon^2 \alpha^{-2}).$$ 
		\item[$\mathrm{(ii)}$] \textbf{Small fluctuation in long time windows:} there is an exceptional set $E$ of initial data in $\Omega\times \T^{N}$ of size 
		$$\PP\otimes m^{\otimes N}(E) \asymp L^{\kappa} \exp(-L^{1-2\kappa}),~~~~\kappa\in(0,1/2),$$ outside of which
		the fluctuation is well-controlled in time windows $I_{t_0}^T:=\left\{t_0,\cdots,t_0+T-1\right\}$ of exponential length $T\geq \exp( L^{1-2\kappa})$ starting at any moment $t_0\in\N$ in the sense that 
		$$\max_{t\in I_{t_0}^T} |\xi_{m}(\theta^t\omega, \Phi_{\alpha}(t,\omega,x))| \leq L^{-\kappa} \alpha K^2,~~~~\forall t_0\in\N,\forall (\omega,x)\in \Omega\times \T^N\setminus E;$$
		\item[$\mathrm{(iii)}$] \textbf{Gaussian fluctuations: } if the coupling map has the form $h(\omega,x,y)=\phi(x)-\phi(y)+\psi(\omega)$ for some Lipschitz $\phi:\T\to\R$ and bounded $\psi$, then at any time $t\in\N$, the fluctuation $\xi_{m}(\theta^t\omega, \Phi_{\alpha}(t,\omega,x))$ is approximately Gaussian, i.e.,
		$$\forall s\in\R:~~~~ \PP\otimes m^{\otimes N}\left\{(\omega,x): \xi_{m}(\theta^t\omega, \Phi_{\alpha}(t,\omega,x)) \leq s\right\} \in\left[ F_L\left( s -c_1\right)-c_2, F_L\left( s+c_1 \right) +c_2\right],$$
		where $c_1\asymp \varepsilon_s \left(\alpha L^{-1} \left( 2\|\phi\|_{C^0}+ \|\psi\|_{L^{\infty}} \right) \right)$, $c_2\asymp L^{-1/2}$, and	$F_L$ denotes the cdf of the normal distribution with zero mean and variance $\alpha^2 \left[\int_{\T}\phi^2\mathrm{d}m - \left(\int_{\T}\phi\mathrm{d}m\right)^2\right] $. 
	\end{enumerate}
\end{theorem}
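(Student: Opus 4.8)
\textit{Proof sketch.} The plan is to derive all three parts by transporting the orbit-wise reduction of Theorem \ref{thm:star} to the measure $\PP\otimes m^{\otimes N}$ along the shadowing map $\mathrm{ACS}$. First I would record that the uniform bound $\sup_{\omega}\|h(\omega,\cdot,\cdot)\|_{C^4}\leq K$ renders the constants of Theorem \ref{thm:star} $\omega$-independent, namely $D(\varepsilon,\omega)\asymp\varepsilon^{-1}K$, $c(\omega)\asymp 1$, and $|h(\omega,\cdot,\cdot)|_{\mathrm{Lip}},\|h(\omega,\cdot,\cdot)\|_{C^0}\leq K$, so that the error tolerances stated here dominate the threshold of Theorem \ref{thm:star}. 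Part (i) is then immediate: for $\PP\otimes m^{\otimes N}$-a.e.\ $(\omega,x)$ the map $\mathrm{ACS}$ supplies a pair $(\omega_{{s}},x_{{s}})$ satisfying (Shadowing) and (Typicality), I apply Theorem \ref{thm:star} to each such initial datum separately, and I absorb the now-uniform constants into $\asymp$ to obtain the $\varepsilon$-reduction with exceptional asymptotic frequency $\rho\asymp\varepsilon^{-1}\exp(-L\varepsilon^2\alpha^{-2})$.

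For part (ii) I would fix $\kappa\in(0,1/2)$, choose the fluctuation threshold $\varepsilon\asymp L^{-\kappa}\alpha K^{2}$ (above the reduction threshold for large $L$), and re-enter the proof of Theorem \ref{thm:star}: there $\xi_m(\theta^t\omega,x^t)=\zeta_d+\zeta_h+\zeta_\ell$, with $|\zeta_d|\leq\varepsilon/4$ and $|\zeta_h|\leq\varepsilon/2$ holding deterministically once $D(\varepsilon,\omega)$ is fixed, so that $|\xi_m(\theta^t\omega,x^t)|>\varepsilon$ can only happen when the shadowing orbit $(\varphi_j(t,\omega_{{s}},x_{{s},j}))_{j=2}^N$ falls into one of the $2D$ bad sets $B(\varepsilon_b,\phi_n)$, $n=\pm1,\dots,\pm D$, where $\varepsilon_b\asymp\varepsilon/(\alpha K)\asymp L^{-\kappa}$. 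Since $\mathrm{ACS}$ preserves $\PP\otimes m^{\otimes N}$ and $m$ is stationary for each independent low-degree coordinate, that shadowing orbit is exactly $m^{\otimes L}$-distributed at every fixed time $t$; hence, by the bad-set volume estimate (Proposition \ref{prop:sizebadset}) and a union bound over $t\in I_{t_0}^T$ and over the $2D$ bad sets,
\begin{align*}
\PP\otimes m^{\otimes N}\bigl\{(\omega,x):\ \max_{t\in I_{t_0}^T}|\xi_m(\theta^t\omega,x^t)|>\varepsilon\bigr\}\ \leq\ T\cdot 2D\cdot 2\exp\!\bigl(-L\varepsilon_b^{2}/2\bigr).
\end{align*}
Choosing the implied constant in $\varepsilon_b$ so that $L\varepsilon_b^{2}/2$ comfortably exceeds $L^{1-2\kappa}$, and inserting $T\asymp\exp(L^{1-2\kappa})$ together with $D\asymp L^{\kappa}$, bounds the right-hand side by $\asymp L^{\kappa}\exp(-L^{1-2\kappa})$; this is the exceptional set $E$, outside of which $\max_{t\in I_{t_0}^T}|\xi_m(\theta^t\omega,x^t)|\leq\varepsilon\leq L^{-\kappa}\alpha K^{2}$.

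For part (iii) I would exploit the special coupling form: with $h(\omega,x,y)=\phi(x)-\phi(y)+\psi(\omega)$ the $\phi(z^t)$ and $\psi(\theta^t\omega)$ contributions cancel in the definition \eqref{eq:fluctuationXi} of $\xi_m$, leaving
\begin{align*}
\xi_m(\theta^t\omega,x^t)=-\alpha\left(\frac{1}{L}\sum_{j=2}^{N}\phi(x_j^t)-\int_{\T}\phi\,\mathrm{d}m\right).
\end{align*}
By (Shadowing) and the Lipschitz bound on $\phi$, replacing each $x_j^t$ by the shadowing orbit $x_{{s},j}^t=\varphi_j(t,\omega_{{s}},x_{{s},j})$ perturbs $\xi_m$ by at most $c_1\asymp\varepsilon_s\bigl(\alpha L^{-1}(2\|\phi\|_{C^0}+\|\psi\|_{L^{\infty}})\bigr)$, using $\|h(\theta^t\omega,\cdot,\cdot)\|_{C^0}\leq 2\|\phi\|_{C^0}+\|\psi\|_{L^{\infty}}$. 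Along the shadowing orbit, $-\tfrac{\alpha}{L}\sum_{j=2}^{N}\bigl(\phi(x_{{s},j}^t)-\int_{\T}\phi\,\mathrm{d}m\bigr)$ is, for each fixed $t$, a normalized sum of $L$ i.i.d.\ bounded mean-zero random variables (again thanks to measure preservation of $\mathrm{ACS}$ and stationarity of $m$), with variance $\tfrac{\alpha^{2}}{L}\bigl(\int_{\T}\phi^{2}\,\mathrm{d}m-(\int_{\T}\phi\,\mathrm{d}m)^{2}\bigr)$; the classical Berry--Esseen theorem then approximates its cdf uniformly by $F_L$ within $c_2\asymp L^{-1/2}$. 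Combining the decoupling error $c_1$ with the Berry--Esseen error $c_2$ gives the sandwich $\bigl[F_L(s-c_1)-c_2,\ F_L(s+c_1)+c_2\bigr]$.

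The main obstacle I anticipate is the balancing in part (ii): the fluctuation threshold $\varepsilon$, the Fourier cutoff $D$ (hence the number of bad sets), and the time-window length $T$ all interact, and one must trade a slower decay of the admissible fluctuation ($\varepsilon\asymp L^{-\kappa}$) for exponentially longer control windows ($T\asymp\exp(L^{1-2\kappa})$) so that the union bound over $I_{t_0}^T$ and over the $D\asymp L^{\kappa}$ bad sets still leaves an exceptional set of size $\asymp L^{\kappa}\exp(-L^{1-2\kappa})$. A secondary but indispensable point, used in both (ii) and (iii), is that $\mathrm{ACS}$ genuinely preserves $\PP\otimes m^{\otimes N}$: this is what converts the purely orbit-wise shadowing of Theorem \ref{thm:star} into the distributional fact that the shadowing orbit is $m^{\otimes L}$-distributed at each fixed time, which is precisely what the absolutely continuous shadowing hypothesis delivers.
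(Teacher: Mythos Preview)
Your proposal is correct and follows essentially the same approach as the paper: item (i) is obtained by applying Theorem \ref{thm:star} orbit-wise via the $\mathrm{ACS}$ map with now $\omega$-uniform constants; item (ii) uses stationarity of $m$ together with $\PP\otimes m^{\otimes N}$-invariance of $\mathrm{ACS}$ to make the shadowing orbit $m^{\otimes L}$-distributed at each fixed time, then a union bound over $t\in I_{t_0}^T$ and over the $2D\asymp L^{\kappa}$ bad sets with $\varepsilon_b\asymp L^{-\kappa}$; item (iii) reduces the fluctuation to a centered average of $\phi$, decouples via shadowing at cost $c_1$, and applies Berry--Esseen to the resulting i.i.d.\ sum. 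The paper packages item (ii) via the sets $Q_{{s}}(t,\phi_n,\varepsilon_b)$ and their $\mathrm{ACS}^{-1}$-preimages, but this is exactly your union-bound argument in different notation.
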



Item \textbf{(i) Almost sure reduction in asymptotic time} of Theorem \ref{thm:star_ACS} can be proven using Corollaries \ref{cor:star_contractions} and \ref{cor:PL_contractions}. The particular constants are simpler and sharper in the Theorem than in the Corollaries because the coupling function in the introduction example has only one Fourier mode and hence requires no truncation; these detailed calculations are presented in full in \cite{BianPhD}.

\textit{Proof of Theorem \ref{thm:star_ACS} (ii).}  Define
\begin{align*}
	Q_{{s}} (t,\phi,\varepsilon_b):= \left\{(\omega_{{s}}, x_{{s}}): \left|\frac{1}{L}\sum_{j=2}^N \phi\circ \varphi_j(t,\omega_{{s}},x_{{s},j}) - \int_{\T} \phi\mathrm{d}m\right| >\varepsilon_b \right\}.
\end{align*}
Note $Q_{{s}} (0,\phi,\varepsilon_b)=\Omega\times B(\varepsilon_b,\phi)$ and hence $m^{\otimes N}(B(\varepsilon_b,\phi))=\PP\otimes m^{\otimes N} (Q_{{s}}(0,\phi,\varepsilon_b))$. Since
$m$ is stationary, it follows that $\PP\otimes m^{\otimes L}$ is invariant for the $L$-fold skew product $$(\omega_{{s}},x_{{s},2},\cdots,x_{{s},N}) \mapsto \left(\theta^t\omega_{{s}}, \varphi_2(t,\omega_{{s}},x_{{s},2}),\cdots, \varphi_N(t,\omega_{{s}},x_{{s},N})\right),$$
and thus
$m^{\otimes N}(B(\varepsilon_b,\phi))=\PP\otimes m^{\otimes N} (Q_{{s}}(t,\phi,\varepsilon_b))$ for all $t\in\N$, where the hub coordinate is free. By Hoeffding, we obtain
$$\PP\otimes m^{\otimes N} (Q_{{s}}(t,\phi,\varepsilon_b)) = m^{\otimes N}(B(\varepsilon_b,\phi))\leq 2\exp\left(-L 2^{-1} \|\phi\|_{\infty}^{-2} \varepsilon_b^2\right).$$
For $\phi_n(x):=e^{2\pi i nx}$ we have
$$\PP\otimes m^{\otimes N} (Q_{{s}}(t,\phi_n,\varepsilon_b)) \leq 2\exp\left(-L 2^{-1} \varepsilon_b^2\right),$$
and therefore, in the notation adopted in the proof ot Theorem \ref{thm:star}, we have
\begin{align*}
	4TD\exp(-L2^{-1}\varepsilon_b^2)\geq &\PP\otimes m^{\otimes N} \left(\bigcup_{t\in I_{t_0}^T} \bigcup_{1\leq |n_2|\leq D} Q_{{s}}(t,\phi_{n_2},\varepsilon_b)\right)=\PP\otimes m^{\otimes N} \left(\mathrm{ACS}^{-1}\bigcup_{t\in I_{t_0}^T} \bigcup_{1\leq |n_2|\leq D} Q_{{s}}(t,\phi_{n_2},\varepsilon_b)\right).
\end{align*}
For any $(\omega,x)\notin \mathrm{ACS}^{-1}\bigcup_{t\in I_{t_0}^T} \bigcup_{1\leq |n_2|\leq D} Q_{{s}}(t,\phi_{n_2},\varepsilon_b)$, we have
\begin{align*}
	\left|\frac{1}{L}\sum_{j=2}^N \phi_{n_2}\circ \varphi_j(t,\omega_{{s}},x_{{s},j}) - \int_{\T} \phi_{n_2}\mathrm{d}m\right| \leq \varepsilon_b,~~~~\forall t\in I_{t_0}^T,1\leq|n_2|\leq D,
\end{align*}
where $(\omega_{{s}},x_{{s}})=\mathrm{ACS}(\omega,x)$, and hence, by choosing $\varepsilon_b=\frac{30}{\alpha K}\varepsilon$, we obtain
$$|\zeta_{\ell}(t,\omega,x,\omega_{{s}},x_{{s}})|\leq \varepsilon/4,~~~~\forall t\in I_{t_0}^T.$$
In summary, for any fixed $$\varepsilon\geq 4\alpha K \varepsilon_s (\alpha L^{-1} K),~~~~D(\varepsilon)\asymp \varepsilon^{-1},~~~~\varepsilon_b=\frac{30}{\alpha K}\varepsilon,$$ 
where $D(\varepsilon)=D(\varepsilon,\omega)$ is now independent of $\omega$ by assumption $\sup_{\omega} \|h(\omega,\cdot,\cdot)\|_{C^4}\leq K$, we have
$$|\xi_{m}(\theta^t\omega,x^t)| \leq \sum_{k=d,h,\ell}|\zeta_k(t,\omega,x,\omega_{{s}},x_{{s}})| \leq \varepsilon/4 + \varepsilon/2 + \varepsilon/4=\varepsilon,$$
for any $(\omega,x)\notin \mathrm{ACS}^{-1}\bigcup_{t\in I_{t_0}^T} \bigcup_{1\leq |n_2|\leq D} Q_{{s}}(t,\phi_{n_2},\varepsilon_b)$. 

Now take $\varepsilon= L^{-\kappa}\alpha K^2$ for some $\kappa\in(0,1/2)$ and $T=\exp\left(L \frac{\varepsilon^2}{\alpha^2 K^2}\right)= \exp(L^{1-2\kappa})$, we have 
$$	\PP\otimes m^{\otimes N} \left(\mathrm{ACS}^{-1}\bigcup_{t\in I_{t_0}^T} \bigcup_{1\leq |n_2|\leq D} Q_{{s}}(t,\phi_{n_2},\varepsilon_b)\right)\leq 4TD\exp(-L2^{-1}\varepsilon_b^2)\asymp L^{\kappa} \exp(-L^{1-2\kappa}).$$
This completes the proof of (ii) Small fluctuation in long time windows. \qed

\noindent \textit{Proof of Theorem \ref{thm:star_ACS} (iii).} 
Fix any time moment $t\in\N$. The fluctuation has the form
\[
\xi^t = \frac{\alpha}{L} \sum_{j=2}^N  \left[ \phi(x_i^t) -\int_{\T}\phi\mathrm{d}m \right].
\]
We compare the low degree trajectory $x_i^t$ observed through $\phi$ with the iid random variables
$$Y_i^t:\Omega\times \T^N\to\R,~~~~Y_i^t(\omega_s,x_s):= \alpha\left[ \phi \circ \mathrm{proj}_i\circ \varphi_i(t,\omega_s,x_s) - \int_{\T}\phi\mathrm{d}m\right],~~~~i=2,\cdots,N.$$
Since $\phi$ is Lipschitz, we obtain that $Y_i^t$ are bounded (from mean value theorem)
\[
|Y_i^t| \leq \alpha |\phi|_{\mathrm{Lip}},
\]
zero mean
\begin{align*}
	\mu:=&\E[Y_i^t]=\alpha\left[\int_{\Omega\times\T^N} \phi\circ \mathrm{proj}_i\circ \varphi_i(t,\omega_s,x_s)\mathrm{d}\PP\otimes m^{\otimes N}(\omega_s,x_s) - \int_{\T}\phi\mathrm{d}m\right]\\
	=&\alpha\left[\int_{\Omega\times\T^N} \phi\circ \mathrm{proj}_i \mathrm{d}\PP\otimes m^{\otimes N}(\omega_s,x_s) - \int_{\T}\phi\mathrm{d}m\right] =0,
\end{align*}
finite variance
$$\sigma^2:=\E[|Y_i^t|^2] = \alpha^2 \left[\int_{\T}\phi^2\mathrm{d}m - \left(\int_{\T}\phi\mathrm{d}m\right)^2\right] \leq 4|\phi|_{\mathrm{Lip}}^2\alpha^2,$$
and finite third moment
$$\tau:=\E[|Y_i^t|^3] \leq 8|\phi|_{\mathrm{Lip}}^3\alpha^3.$$
Now by shadowing estimates and absolutely continuous shadowing map $\mathrm{ACS}$, we have
\begin{align*}
	\PP\otimes m^{\otimes N}\left\{\xi^t \leq s\right\} =&\mathrm{ACS}_*\PP\otimes m^{\otimes N}\left\{(\omega,x): \frac{1}{L} \sum_{j=2}^N  \alpha\left[ \phi(x_i^t) -\int_{\T}\phi\mathrm{d}m \right] \leq s\right\} \\
	\leq& \PP\otimes m^{\otimes N}\left\{(\omega_s,x_s): \frac{1}{L} \sum_{j=2}^N  Y_i^t\leq s + |\phi|_{\mathrm{Lip}} \alpha\varepsilon_s\right\} \\
	=& \PP\otimes m^{\otimes N}\left\{(\omega_s,x_s): \frac{1}{\sigma \sqrt{L}} \sum_{j=2}^N  Y_i^t\leq (s + |\phi|_{\mathrm{Lip}} \alpha\varepsilon_s) \frac{\sqrt{L}}{\sigma}\right\} \\
	\leq & \Phi_{\Nil(0,1)}\left((s+ |\phi|_{\mathrm{Lip}}\alpha\varepsilon_s )\frac{\sqrt{L}}{\sigma}\right) + \frac{3\tau}{\sigma^3}\sqrt{L} \\
	=& \Phi_{\Nil(0,\sigma^2/L)}\left(s+ |\phi|_{\mathrm{Lip}}\alpha\varepsilon_s \right) + \frac{3\tau}{\sigma^3\sqrt{L} }
\end{align*}
where the second approximation follows by Berry-Esseen \cite[Theorem 3.4.9]{Durrett2019}, the last equality follows from the Gaussian cdf relation $\Phi_{\Nil(\mu,\sigma^2)}(s) = \Phi_{\Nil(0,1)}\left(\frac{x-\mu}{\sigma}\right)$, and $\sigma^2 =\alpha^2 \left[\int_{\T}\phi^2\mathrm{d}m - \left(\int_{\T}\phi\mathrm{d}m\right)^2\right] $, $\varepsilon_s = \varepsilon_s \left(\alpha L^{-1} \left( 2\|\phi\|_{C^0}+ \|\psi\|_{L^{\infty}} \right) \right)$. The lower bound can be obtained similarly.
\qed

\subsection{Proof of Theorem A} \label{sec:proof_theorem_A}
Since the coupling function (\ref{eq:h_omega}) has only one Fourier mode, the Fourier truncation in Lemma \ref{lemma:zeta2} is unnecesary and the bad set estimates in Lemma \ref{lemma:zeta3} reduce to one bad set only. 

Item (i) follows by a simplified version of Theorem \ref{thm:star} together with the uniform typicality Theorem \ref{thm:typicality_randomcontractions}. The shadowing precision $\varepsilon_s=\frac{17\alpha}{3L}$ can be calculated as in Corollary \ref{cor:star_contractions}. We choose $\varepsilon_b=\frac{\varepsilon}{2\alpha}$ so that the shadowing and bad set arguments each contribute $\varepsilon/2$ to the reduction estimate.

Item (ii) follows from a similar argument to Theorem \ref{thm:star_ACS} Item (ii) with $\varepsilon_b=L^{-\kappa}\sqrt{6}$ and $\varepsilon_s=\frac{17\alpha}{3L}$.

Item (iii) follows from Theorem \ref{thm:star_ACS} Item (iii) with $\phi(x)=\sin 2\pi x$, $\psi(\omega)=\omega/3.6$.

\subsection{Proof of Theorem C}\label{sec:proof_theorem_C}
Item (i) follows by a simplified version of Theorem \ref{thm:loc_star} together with the uniform typicality Theorem \ref{thm:typicality_randomcontractions}. The non ldn contribution from Lemma \ref{lemma:zeta_i2} is $|\zeta_{i,c}|\leq \frac{17}{6}\alpha(1-\nu)\leq \varepsilon/3$, by our choice of $\varepsilon$. The shadowing precision $\varepsilon_s=\frac{17\alpha \delta}{3\Delta}$ can be calculated as in Corollary \ref{cor:PL_contractions}. We choose $\varepsilon_b=\frac{\varepsilon}{3\alpha}$ so that the shadowing and bad set arguments each contribute $\varepsilon/3$ to the reduction estimate.

Item (ii) follows from a similar argument to Theorem \ref{thm:star_ACS} Item (ii) with $\varepsilon_b=\Delta^{-\kappa}\sqrt{6}$ and $\varepsilon_s=\frac{17\alpha\delta}{3\Delta}$.

Item (iii) follows from a similar argument to Theorem \ref{thm:star_ACS} Item (iii) with $\phi(x)=\sin 2\pi x$, $\psi(\omega)=\omega/3.6$. The correction constants $s_1,s_2$ need to account for non ldn contribution.
$$s_1 : = \alpha_i \frac{1}{\nu_i k_i}\sum_{j\in\Nil_i\cap\Lil_{\delta}} (\sin2\pi x_i^t-\sin 2\pi y_j^t ) + \alpha \frac{1}{\Delta_0} \sum_{j\in\Nil\setminus \Lil_{\delta}} h_{\omega_i^t}(z_i^t,x_j^t).$$
Using $|h|\leq 2+ \frac{3}{3.6} = \frac{17}{6}$ and $\varepsilon_s  = \frac{17}{3}\alpha\frac{\delta}{\Delta}$, we have
$$|s_1| \leq \alpha_i 2\pi \varepsilon_s  + \alpha \Delta_0^{-1} (1-\nu_i) k_i \frac{17}{6} = \alpha_i 2\pi \frac{17}{3}\alpha\frac{\delta}{\Delta} + \alpha \Delta_0^{-1} (1-\nu_i) k_i \frac{17}{6},$$
which simplifies to
$$|s_1| = O(N^{-\frac{1-\lambda_{\mathrm{ldn}}}{\beta-1}} + N^{-\lambda_{\mathrm{ldn}} \frac{\beta-1}{\beta-2}} w^{\beta-2}).$$
Lastly, the Berry-Esseen correction, similar to that in Theorem \ref{thm:star_ACS} Item (iii), reads
$$s_2 := \frac{C \int_0^1|\sin 2\pi x|^3\mathrm{d}x}{2^{-3/2} \sqrt{\nu_i k_i}} = O(N^{-\frac{1}{2\beta-2}}),~~~~\forall i\in\Hil_{\Delta}.$$

\newpage
\bibliographystyle{amsalpha}
\bibliography{references}
\end{document}